\documentclass[reqno,11pt]{amsart}
\usepackage{amsmath,amsfonts,amssymb,amsxtra,latexsym,amscd,enumerate,enumitem,amsthm,verbatim,here}
\usepackage{mathtools}
\usepackage{array}
\usepackage{booktabs}
\usepackage[table,dvipsnames]{xcolor}
\usepackage[normalem]{ulem}
\usepackage{cancel}

\usepackage[margin=1.2in]{geometry}
\numberwithin{equation}{section}

\newcommand{\red}[1]{\textcolor{red}{#1}}

\usepackage{hyperref}
\definecolor{myrefred}{RGB}{180,0,0}
\definecolor{myciteblue}{RGB}{0,70,140} 
\hypersetup{
    colorlinks=true,
    pdfstartview=FitV,
    linkcolor=myrefred,      
    citecolor=myciteblue,    
    urlcolor=black
}
\definecolor{labelkey}{rgb}{0.6,0,0}

\newcommand{\R}{\mathbb{R}}

\newcommand{\C}{\mathbb{C}}

\newcommand{\la}{\langle}
\newcommand{\ra}{\rangle}

\newcommand{\dom}{{\R^3_+} }
\def\tr{\mathrm{Tr}}
\newcommand{\res}{{\mathcal{R}_+}}
\newcommand{\even}{{\mathcal{E}_{\mathrm e}} }
\newcommand{\odd}{{\mathcal{E}_{\mathrm o}}}
\DeclareMathOperator{\sgn}{sgn}
\newcommand{\robin}{{\mathcal{E}_{q}}}

\numberwithin{equation}{section} 

\newtheorem{theorem}{Theorem}

\newtheorem{lemma}{Lemma}[section]
\newtheorem{corollary}[lemma]{Corollary}

\newtheorem{proposition}[lemma]{Proposition}

\theoremstyle{remark}
\newtheorem{remark}[lemma]{Remark}

\begin{document}
\title[Hartree Scattering in Full and Half Spaces]
{A Physical-Space Approach to Modified Scattering for the 3D Hartree Equation: Full-Space and Robin Half-Space}
\author{Beno\^it Pausader}
\address{Brown University}
\email{benoit\_pausader@brown.edu}
\author{Mengyi Xie}
\address{Yale University}
\email{mengyi.xie@yale.edu}

\begin{abstract}
We prove global well-posedness and modified scattering for the three-dimensional Hartree equation for small data in $H^{0,k}$, with $k>\frac12$ on $\R^3$ and $k=1$ on the half-space $\R^3_+$ with a real Robin boundary condition.  For these data, the nonlinear and asymptotic analysis is carried out entirely in physical space, combining slow-growth estimates for Galilean vector fields with a scale-invariant interaction estimate. For the half-space problem, we construct an extension intertwining the Robin and free flows and a boundary-adapted field that overcome the loss of normal Galilean symmetry.

\end{abstract}

\maketitle

\setcounter{tocdepth}{1}

\tableofcontents

\section{Introduction}

\subsection{Asymptotic behavior for the Schr\"odinger-Poisson equation}

We study the three-dimensional Hartree equation with Coulomb interaction, also known as the Schr\"odinger--Poisson system,
\begin{equation}\label{Hartree}
i\partial_t u-\Delta u+\lambda Vu=0,\qquad -\Delta V= |u|^2, \qquad \lambda\in \R, \qquad u(t=0)=u_0,
\end{equation}
on either the full space $X=\R^3$ or the half-space
$X=\R^3_+:=\{x_1>0\}$. In the half-space case, we impose the Robin and Neumann boundary conditions
\begin{equation}\label{BC}
\partial_nu+qu=0,\qquad\partial_nV=0\qquad\hbox{ on }x\in\partial\mathbb{R}_+^3,
\end{equation}
where $q>0$ and $n$ denotes the outward unit normal. 

In this paper, we develop a robust physical-space approach to the asymptotic behavior of Hartree-type equations, which extends to some domains with boundary. We write $H^{0,s}(X):=\{f:\la x\ra^s f\in L^2(X)\}$, with norm $\|f\|_{H^{0,s}}:=\|\la x\ra^s f\|_{L^2}$, and use $H^{s,0}(\R^3)=H^s(\R^3)$. Our first result treats the full-space and Robin half-space problems in the same $H^{0,1}$ framework.

\begin{theorem}\label{thm:dom}
    Fix $\lambda\in\R$ and $0<\eta<1/2$, and, in the half-space case, fix $q>0$. There exists $\varepsilon_0>0$, depending on $(\lambda, \eta)$ in the full space and on $(q,\lambda, \eta)$ in the half-space, such that, if  \begin{equation}\label{eq:small_initial}
        \|\la x\ra u_0\|_{L^2}\leq \varepsilon_0,
    \end{equation}
    then the Hartree equation \eqref{Hartree}, with \eqref{BC} in the half-space, admits a unique global solution in the Duhamel sense,
    \begin{equation*}
        u\in C([0,\infty);L^2(X))\cap L^4_{\mathrm{loc}}([0,\infty);L^3(X)).
    \end{equation*}
    Moreover, there exists an asymptotic state $h_\infty\in L^2(X)$ and a phase $\phi_\infty\in L^{3,\infty}(X)$ with relation 
    \begin{equation}\label{LaplacePhi}
        -\Delta\phi_\infty=\vert h_\infty\vert^2,
    \end{equation}
    where $\Delta$ stands for the Neumann Laplacian in the half-space, such that as $t\to\infty$, the following modified scattering holds:
\begin{equation}\label{AsymptoticProfile}
u(x,t)=t^{-\frac{3}{2}} e^{-i\frac{|x|^2}{4t}}\,h_\infty(x/t)\,
\exp\!\big(i\lambda \log t\,\phi_\infty(x/t)\big)\;+\varepsilon_0\;\mathcal{O}_{L^2(X)}(t^{-\eta}).
\end{equation}

\end{theorem}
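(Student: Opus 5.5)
The proof proceeds in physical space via the Galilean vector field $J(t)=x+2it\nabla$ (for $e^{-it\Delta}$ with our sign convention). We set $f(t)=e^{it\Delta}u(t)$ and note that $\|Ju\|_{L^2}=\|xf\|_{L^2}$. Local well-posedness in $C_tL^2\cap L^4_tL^3_x$ follows from Strichartz estimates and the Hardy--Littlewood--Sobolev bound $\|\nabla V\|\lesssim \||u|^2\|$, with $\|Vu\|_{L^{4/3}_tL^{3/2}_x}\lesssim \|u\|_{L^4L^3}^2\|u\|_{L^\infty L^2}$ on short intervals. Mass conservation then makes the solution global.

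\textbf{Bootstrap.} The key a priori estimate is
\[
\|u(t)\|_{L^2}+\langle t\rangle^{-\delta}\|J(t)u(t)\|_{L^2}\le C\varepsilon_0,
\]
with a small $\delta>0$ (say $\delta\ll 1/2-\eta$). It comes with the decay bound $\|u(t)\|_{L^\infty}+t^{1/2}\|u\|_{L^6}\lesssim \varepsilon_0 t^{-3/2+\delta}$, obtained from the factorization $u=M D \mathcal{F} M f$, where $M=e^{-i|x|^2/4t}$ and $D$ is the dilation. In fact only the $L^6$ and $L^3$ Gagliardo--Nirenberg type bounds $\|u\|_{L^p}\lesssim t^{-3(1/2-1/p)}\|u\|^{1-\theta}\|Ju\|^\theta$ are needed, not $L^\infty$. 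Since $J$ commutes with the linear flow and $J(Vu)=VJu+2it(\nabla V)u$, the equation for $Ju$ has the form $i\partial_tJu-\Delta Ju+\lambda VJu=-2it\lambda(\nabla V)u$. The potential term is real, so it contributes nothing to $\frac{d}{dt}\|Ju\|^2$, and we get
\[
\frac{d}{dt}\|Ju\|_{L^2}\lesssim t\|\nabla V\|_{L^\infty}\|u\|_{L^2}.
\]
The scale-invariant interaction estimate is $t\|\nabla V\|_{L^\infty}\lesssim t\,\||u|^2\|_{L^{3/2}}^{1/2}\||u|^2\|_{L^3}^{1/2}\lesssim \varepsilon_0^2 t^{-1+2\delta}$ after interpolation. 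This gives the $\log$- or $t^{\delta}$-slow growth and closes the bootstrap. For the $H^{0,k}$ version one would use fractional $|J|^k$, but here $k=1$ suffices.

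\textbf{Asymptotics.} The key identity is $\widehat{f}$-free: we write $u=MD\,w$ with $w(t,y)=(\mathcal{F}Mf)(y)$ (up to constants) and compute $i\partial_t w=\lambda t^{-1}(\Phi_w) w+R$. Here $\Phi_w=(-\Delta)^{-1}|w|^2$, by the scaling of the Coulomb potential ($V(t,x)=t^{-1}\Phi_w(x/t)$), and the remainder $R$ comes from $M-1$ and satisfies $\|R\|_{L^2}\lesssim \varepsilon_0^3 t^{-1-\eta'}$, using $\|(M-1)f\|\lesssim t^{-1/2}\|xf\|^{1/2}\|f\|^{1/2}$-type bounds. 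The modulus $|w|$ is then Cauchy in $L^2$ at rate $t^{-\eta'}$, so $|w|^2\to|h_\infty|^2$ in $L^1$, and $\Phi_w\to\phi_\infty$ in $L^{3,\infty}$ by weak Young/HLS. Setting $h=w\exp(-i\lambda\int_1^t s^{-1}\Phi_w(s)ds)$, one checks that $h$ is Cauchy, and the phase correction $\int_1^t s^{-1}(\Phi_w-\phi_\infty)\,ds$ replaced by $\log t\,\phi_\infty$ plus a constant phase (absorbed into $h_\infty$) yields \eqref{AsymptoticProfile} with rate $t^{-\eta}$.

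\textbf{Half-space.} We construct an extension operator $E$ from $\R^3_+$ to $\R^3$ intertwining the Robin flow with the free flow, namely $e^{-it\Delta_q}=R\,e^{-it\Delta}E$. We use an even reflection corrected by the Robin kernel, $Eg(-x_1,x')=g(x_1,x')-2q\int_0^\infty e^{-qs}g(x_1+s,x')\,ds$, which maps the Robin condition to the free problem and is bounded on $L^2$ and on $H^{0,1}$ since $q>0$. Normal Galilean symmetry is lost: $J_1=x_1+2it\partial_1$ does not preserve \eqref{BC}. We therefore define the boundary-adapted field as the conjugate $\tilde J=R\,J\,E$, which commutes with the Robin flow. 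The Neumann potential $V$ is the even reflection of the full-space potential generated by $E_{\mathrm e}|u|^2$, so $\nabla V$ has the same bounds. The commutator $[\tilde J_1,V]$ produces extra terms from the mismatch between even and Robin extensions, of the form $q\int e^{-qs}(\cdots)$ with effective weight, and these must be shown to be $O(t\|\nabla V\|_\infty)$ as well.

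The main obstacle is this step: controlling $\tilde J(Vu)-V\tilde Ju$ in the half-space without losing powers of $t$. The asymptotic analysis then proceeds identically on the extension.
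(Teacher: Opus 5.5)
\textbf{The full-space bootstrap does not close.} Your energy inequality $\frac{d}{dt}\|Ju\|_{L^2}\lesssim t\|\nabla V\|_{L^\infty}\|u\|_{L^2}$ is fine. The bound $t\|\nabla V\|_{L^\infty}\lesssim\varepsilon_0^2t^{-1+2\delta}$ is even true, though not via the interpolation you wrote, which is not scale-invariant. It follows from Hardy's inequality: $|\nabla V(x)|\lesssim\int|u(y)|^2|x-y|^{-2}\,dy\lesssim\|\nabla(e^{i|x|^2/4t}u)\|_{L^2}^2\simeq t^{-2}\|Ju\|_{L^2}^2$.

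But this bound does not give slow growth. It yields $\frac{d}{dt}\|Ju\|_{L^2}\lesssim\varepsilon_0^3t^{-1+2\delta}$, hence $\|Ju(t)\|_{L^2}\lesssim\varepsilon_0+\delta^{-1}\varepsilon_0^3t^{2\delta}$. That improves $C\varepsilon_0t^{\delta}$ only while $t^{\delta}\lesssim\delta\varepsilon_0^{-2}$. No better choice of exponents fixes this. By scaling, any bound on $t\|(\nabla V)u\|_{L^2}$ in terms of $\|u\|_{L^2}$ and $\|Ju\|_{L^2}$ alone has the form $t^{-1}\|u\|_{L^2}\|Ju\|_{L^2}^2$. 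You are therefore left with the Riccati inequality $y'\lesssim\varepsilon_0y^2/t$ for $y=\|Ju\|_{L^2}$, which controls $y$ only for $\log t\lesssim\varepsilon_0^{-2}$. Nor is $t^{-3/2}$ decay in $L^\infty$ available, since $\widehat f$ need not be bounded for $H^{0,1}$ data.

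The missing idea is a separately propagated, non-growing, scale-invariant bound on the potential, which makes the inequality linear in $\|Ju\|$ with coefficient $\varepsilon^2/t$. The paper estimates $\|[G,V]u\|_{L^2}\le 2t\|\nabla V\|_{L^3}\|u\|_{L^6}\lesssim I(t)^{1/2}\|Gu\|_{L^2}$. It then proves $t^2I(t)\lesssim\varepsilon_0^4$ from the exact identity $\partial_t(t^2I)=t^2A(t)$. Writing $\Im(\bar u\nabla u)=\frac1{2t}(\Im(\bar uGu)-x|u|^2)$, the $x|u|^2$ part produces exactly $-\frac2tI$. The remainder satisfies $|A|\lesssim t^{-1}\|u\|_{L^6}^3\|Gu\|_{L^2}\lesssim t^{-4}\|Gu\|_{L^2}^4$, which is integrable against $t^2$ once $\|Gu\|_{L^2}$ grows only like $t^{K\varepsilon^2}$. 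The two quantities are closed jointly in $tI^{1/2}+\la t\ra^{-2\delta}\|Zu\|_{L^2}^2\le\varepsilon^2$. What you call the ``scale-invariant interaction estimate'' is instead an interpolation of the growing Galilean norm, which is circular.

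A smaller point: with $w=\mathcal FMf$ the exact profile equation contains the linear term $t^{-2}\Delta w$, which is not $O_{L^2}(t^{-1-})$ at $H^1$ regularity. Your cubic $M-1$ remainder corresponds to working with $\widehat f$ instead.

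\textbf{The half-space construction is incorrect.} Your extension integrates over $(x_1,\infty)$. This is the transposed (kernel) form of the Robin reflection, and it does not intertwine the flows. It is not even continuous across $x_1=0$: its trace from the left is $g(0,x')-2q\int_0^\infty e^{-qs}g(s,x')\,ds$. Requiring $F$ continuous with $(q-\partial_1)F$ odd forces the Volterra form $\mathcal E_qf(-a,x')=f(a,x')-2q\int_0^ae^{-q(a-y_1)}f(y_1,x')\,dy_1$.

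More seriously, $\tilde J=RJE$ is just $J$. Since $J=x+2it\nabla$ is a local differential operator, $RJE=JRE=J$. As you note yourself, $J$ does not preserve the Robin condition, so it does not commute with the Robin flow. The field that does is $Z(t)=U_q(t)(ix)U_q(-t)$. Through $\mathcal E_q(x_1f)=|x_1|\mathcal E_qf+C_qf$, its normal component equals $\res U_0(t)(i|x_1|)U_0(-t)\mathcal E_q$ plus an $L^2$-bounded operator. So it is governed by the nonlocal operator $2ite^{-i\sigma_1}|\partial_1|e^{i\sigma_1}$, not by a Galilean derivative.

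The step you defer as ``the main obstacle'' is exactly where the proof lives, and none of it is supplied. One needs the comparison $\|Gu\|_{L^2}\lesssim_q\|Zu\|_{L^2}+\|u\|_{L^2}$ and a commutator estimate for $|\partial_1|$. One also needs the explicit formula for $\mathfrak Eu=\mathcal E_q(Vu)-(\even V)\mathcal E_qu$, whose integrand contains the difference $V(a,x')-V(y_1,x')$ and is then bounded by Schur's test.

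Finally, the asymptotics cannot ``proceed identically on the extension'': $\mathcal E_qu$ does not solve the free Hartree equation. Indeed $\mathcal E_q(Vu)\neq(\even V)\mathcal E_qu$, and $\even V$ is not the Coulomb potential of $|\mathcal E_qu|^2$. The paper instead runs the asymptotic analysis directly on $\R^3_+$. It uses the rescaled profile, the homogeneity $K_n(ty,tz)=t^{-1}K_n(y,z)$ of the Neumann kernel, and duality against $H_0^1(\R^3_+)$.
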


\begin{remark} We make the following remarks:
    \begin{itemize}
    \item See Section \ref{subsec:LWP} for the precise notion of low-regularity solutions in the case $X=\mathbb{R}^3_+$.
        \item We also have propagation of regularity. In particular, for $X=\R^3_+$, if we additionally have $u_0\in H^2(\R^3_+)$ with $\tr(\partial_n u_0+q u_0)=0$, then the weak solution is a global strong solution with
\begin{equation*}
    u\in C([0,\infty);H^2(\R^3_+))\cap C^1([0,\infty);L^2(\R^3_+)),
\end{equation*}
which solves \eqref{Hartree}-\eqref{BC} pointwise in time in $L^2_x$ and preserves the Robin boundary condition for all $t\ge0$.
\item The half-space solution exhibits the same logarithmic long-range correction as in the full space, with the limiting phase now determined by the Neumann Coulomb potential of the asymptotic profile.
    \end{itemize}
\end{remark}

In the full space, the spatial localization assumption can be weakened. The following refinement is proved in Appendix \ref{sec:fractional}. We retain the $H^{0,1}$ argument as the main development because its physical-space structure also applies to the Robin problem.

\begin{theorem}\label{thm:full_fractional}
Fix $\beta>1/2$, $\lambda\in\R$ and $0<\eta<1/10$. There exists $\varepsilon_0>0$, depending on $(\beta,\lambda,\eta)$, such that, if
\begin{equation}\label{eq:fractional_initial}
\|\la x\ra^\beta u_0\|_{L^2(\R^3)}\leq\varepsilon_0,
\end{equation}
then \eqref{Hartree} on $\R^3$ admits a unique global solution in the Duhamel sense,
\begin{equation*}
u\in C([0,\infty);L^2(\R^3))\cap L^4_{\mathrm{loc}}([0,\infty);L^3(\R^3)).
\end{equation*}
Moreover, there exist $h_\infty\in L^2(\R^3)$ and $\phi_\infty=(-\Delta)^{-1}|h_\infty|^2\in L^{3,\infty}(\R^3)$ such that, as $t\to\infty$,
\begin{equation}\label{eq:fractional_asymptotics}
u(x,t)=t^{-\frac32}e^{-i\frac{|x|^2}{4t}}h_\infty(x/t)\exp\!\big(i\lambda\log t\,\phi_\infty(x/t)\big)+\varepsilon_0\mathcal O_{L^2(\R^3)}(t^{-\eta}).
\end{equation}
\end{theorem}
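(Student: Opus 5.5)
The plan is to run a bootstrap in physical space on the profile $f(t)=e^{-it\Delta}u(t)$, adapted to fractional weights. Write $J=x+2it\nabla=M(t)(2it\nabla)M(t)^{-1}$ with $M(t)=e^{-i|x|^2/(4t)}$, so that $|J|^\beta:=M(t)\,|2t\nabla|^\beta M(t)^{-1}$ commutes with $i\partial_t-\Delta$ and $\||J|^\beta u(t)\|_{L^2}=\||x|^\beta f(t)\|_{L^2}$. Local well-posedness in $C_tL^2\cap L^4_tL^3$ follows from Strichartz estimates and Hardy--Littlewood--Sobolev, since $V=(-\Delta)^{-1}|u|^2$ satisfies $\|V\|_{L^\infty}\lesssim\|u\|_{L^3}\|u\|_{L^{3/2}}$-type bounds and $L^2$ is conserved.

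The bootstrap hypotheses are
\[
\||J|^\beta u(t)\|_{L^2}\le \varepsilon_1\la t\ra^{\delta},\qquad \|u(t)\|_{L^\infty}\le\varepsilon_1\la t\ra^{-3/2}\ \text{(or a weaker averaged surrogate)},
\]
with $\varepsilon_1=\varepsilon_0^{2/3}$ and $\delta\ll1$. The decay assumption is the delicate point. With only $\beta>1/2$ weights, pointwise $L^\infty$ decay of $\hat f$ is unavailable. I therefore replace it by a decay estimate for the potential. I claim
\[
\|\nabla^k V(t)\|_{L^\infty}\lesssim \varepsilon_1^2\, t^{-1-k}\quad(k=0,1).
\]
This should follow from the factorization $u=M\,D_t\,\widehat{Mf}$ (with $D_t$ the $L^2$ dilation by $t$), which gives $|u|^2(x)=t^{-3}|\widehat{M f}(x/2t)|^2$. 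Hence
\[
V(t,x)=t^{-1}\big((-\Delta)^{-1}|\widehat{Mf}|^2\big)(x/2t).
\]
Since $\widehat{Mf}\in L^2\cap \dot H^{\beta}$ with $\beta>1/2$, we have $|\widehat{Mf}|^2\in L^1\cap L^{p}$ for some $p>3/2$ by Sobolev embedding $\dot H^\beta\subset L^{6/(3-2\beta)}$, so $(-\Delta)^{-1}$ of it is bounded. This is precisely where $\beta>1/2$ enters, as $\beta=1/2$ is the borderline $L^3$ case. The $t^{\delta}$ loss in $\|x^\beta f\|$ is absorbed by choosing $\eta<1/10$.

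Next comes the weighted energy estimate. Applying $|J|^\beta$ to the equation gives
\[
(i\partial_t-\Delta)|J|^\beta u=-\lambda|J|^\beta(Vu).
\]
Since $V$ is real, I use a fractional Leibniz/commutator estimate: $|J|^\beta(Vu)=V|J|^\beta u+R$, with $\|R\|_{L^2}\lesssim t^\beta\|\,|\nabla|^\beta V\|_{L^{p}}\|u\|_{L^{q}}$. The first term drops out of the $L^2$ energy identity. The commutator is controlled by the scale-invariant interaction estimate: in the rescaled variables, $t^\beta|\nabla|^\beta V$ costs $t^{-1}$ times a quantity bounded by $\varepsilon_1^2 t^{\delta}$. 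This yields
\[
\frac{d}{dt}\||J|^\beta u\|_{L^2}^2\lesssim \varepsilon_1^2 t^{-1}\||J|^\beta u\|_{L^2}^2,
\]
hence growth $t^{C\varepsilon_1^2}$, which closes for $\varepsilon_0$ small. \textbf{The main obstacle} is this fractional commutator: the Kato--Ponce form must avoid placing $\beta$ derivatives on $u$ in an $L^p$ norm with $p\neq2$, which we cannot control. I would first try the Kenig--Ponce--Vega estimate $\||\nabla|^\beta(Vg)-V|\nabla|^\beta g\|_{L^2}\lesssim\||\nabla|^\beta V\|_{L^\infty}\|g\|_{L^2}$ for $0<\beta<1$, applied with $g=M^{-1}u$ and exploiting that conjugation by $M$ commutes with multiplication by $V$.

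For modified scattering, set $h(t)=\widehat{Mf}(t)$ (equivalently $t^{3/2}M^{-1}u(t,2t\,\cdot)$ after rescaling). Then
\[
i\partial_t h=\lambda t^{-1}\Phi[h]\,h+\mathcal R,\qquad \Phi[h]=(-\Delta)^{-1}|h|^2.
\]
The remainder satisfies $\|\mathcal R\|_{L^2}\lesssim t^{-1-\theta}$ with $\theta\approx\beta/2-\delta$, coming from $M-1=O((|x|^2/t)^{\beta/2})$ against the weight. Since $|h|$ is almost conserved, the phase $\Phi$ converges. Setting
\[
h_\infty=\lim_{t\to\infty}\exp\!\big(i\lambda\log t\,\Phi[h(t)]\big)h(t)
\]
in $L^2$, the convergence rate is $t^{-\eta}$, and $\phi_\infty=\Phi[h_\infty]\in L^{3,\infty}$ follows from HLS. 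Translating back through $M$ and $D_t$ gives \eqref{eq:fractional_asymptotics}, and the restriction $\eta<1/10$ accounts for the interpolation losses.
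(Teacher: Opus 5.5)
\textbf{There is a genuine gap at the step that is supposed to close the bootstrap.}

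The fractional commutator is not the real difficulty. The form $\|[|\nabla|^\beta,V]v\|_{L^2}\lesssim\||\nabla|^\beta V\|_{L^{3/\beta}}\|v\|_{L^{p_*}}$, with $p_*=\frac{6}{3-2\beta}$, puts no derivatives on $v$. Combined with Hardy--Littlewood--Sobolev ($\||\nabla|^\beta V\|_{L^{3/\beta}}\lesssim\|u\|_{L^3}^2$) and Sobolev, it gives $\|[|J|^\beta,V]u\|_{L^2}\lesssim t^{-1}\|g(t)\|_{L^3}^2\,\||J|^\beta u(t)\|_{L^2}$. Here $g(y,t)=t^{3/2}e^{i|y|^2t/4}u(ty,t)$ is, up to a fixed dilation, your $\widehat{Mf}$.

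Your inequality $\frac{d}{dt}\||J|^\beta u\|_{L^2}^2\lesssim\varepsilon_1^2t^{-1}\||J|^\beta u\|_{L^2}^2$ therefore needs a \emph{uniform-in-time} bound on the scale-invariant norm $\|g(t)\|_{L^3}$. Your proposal does not supply one: you bound the relevant quantity by $\varepsilon_1^2t^{\delta}$ and then drop the $t^{\delta}$. With coefficient $\varepsilon_1^2t^{-1+c\delta}$, Gr\"onwall gives $\exp(C\varepsilon_1^2t^{c\delta}/\delta)$, which never returns $\varepsilon_1\langle t\rangle^{\delta}$. Interpolating $\|g\|_{L^3}$ between $L^2$ and $\dot H^\beta$ instead gives a superlinear ODE that only controls times $t\lesssim e^{c\varepsilon_0^{-2}}$.

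The same loss affects the other substitutes you propose:
\begin{itemize}
\item Your decay claim: $\widehat{Mf}\in L^2\cap\dot H^\beta$ with growing $\dot H^\beta$ norm gives only $\|V(t)\|_{L^\infty}\lesssim\varepsilon_1^2t^{-1+c\delta}$.
\item The bound $\|\nabla V(t)\|_{L^\infty}\lesssim t^{-2}$, and likewise the Kenig--Ponce--Vega variant with $\||\nabla|^\beta V\|_{L^\infty}$, would require $|\widehat{Mf}|^2\in L^{3+}$, resp.\ $L^{3/(2-\beta)+}$, i.e.\ $\beta>1$.
\item The interaction functional of the $H^{0,1}$ argument cannot be imported. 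The forcing in $\partial_t(t^2I)=t^2A(t)$ is controlled by $t^{-2}\|Gu\|_{L^2}^4$, which uses a full Galilean derivative, not $|J|^\beta$.
\end{itemize}

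\textbf{The missing ingredient, which is the heart of the paper's argument.} One includes $\|g(t)\|_{L^3}$ in the bootstrap alongside $t^{-\delta}\||\nabla|^\beta g(t)\|_{L^2}$. One then proves $\sup_{t\geq1}\|g(t)\|_{L^3}\lesssim\varepsilon_0$ directly from the profile equation $i\partial_tg-t^{-2}\Delta g+\lambda t^{-1}V_gg=0$, $V_g=(-\Delta)^{-1}|g|^2$, via a time-dependent frequency split at $\sqrt t$.
\begin{itemize}
\item High frequencies: $\|P_{>\sqrt t}g\|_{L^3}\lesssim t^{-\frac{2\beta-1}{4}}\||\nabla|^\beta g\|_{L^2}\lesssim\varepsilon_0t^{-\frac{2\beta-1}{4}+\delta}$. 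This is the essential use of $\beta>\frac12$.
\item Low frequencies: since $V_g$ is real, the $L^3$ norm changes only through two errors. The first is $t^{-2}\|\Delta P_{\leq\sqrt t}g\|_{L^3}+\|(\partial_tP_{\leq\sqrt t})g\|_{L^3}\lesssim\varepsilon_0t^{-1-\frac{2\beta-1}{4}+\delta}$, by Bernstein. The second is $t^{-1}\|[P_{\leq\sqrt t},V_g]g\|_{L^3}\lesssim\varepsilon_0^3t^{-1-\frac{3(2\beta-1)}{4}+3\delta}$, by a cutoff commutator estimate placing $\frac{3(2\beta-1)}{2}$ derivatives on $V_g$.
\end{itemize}
Both errors are integrable once $\delta<\frac{2\beta-1}{4}$. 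With this bound in hand, your Gr\"onwall step becomes correct.

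Your scattering step is also only sketched. There $\Phi[h]$ lies merely in $L^{3,\infty}$ and $h$ only in a slowly growing $H^\beta$. The paper handles this as follows: it proves $H^{-2}$ convergence of $h=ge^{-i\lambda\theta}$, interpolates against the $H^\beta$ growth to get $h_\infty\in H^{1/2}$ and an $L^2$ rate $t^{-(1-4\delta)/5}$ up to logarithms, and then uses $|e^{ia}-e^{ib}|\lesssim|a-b|^{1/2}$. This, rather than unspecified ``interpolation losses'', is where $\eta<\frac1{10}$ comes from.
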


For comparison, \cite{HNO} assumes $u_0\in H^{s,0}(\R^3)\cap H^{0,s}(\R^3)$, with $1/2<s<3/2$. For the $L^2$ modified-scattering conclusion above, Theorem \ref{thm:full_fractional} removes the additional Sobolev assumption.  The results in \cite{KP11} and \cite{Van} require $H^{2,0}\cap H^{0,2}$ and $H^{0,\frac32+}$ data, respectively. This result also complements \cite{Nak02a} which obtains modified wave operators in $H^{0,s}$.

This work was motivated by two research directions.

\subsubsection{Scaling-invariant estimates}
In the whole space $X=\mathbb{R}^3$, \eqref{Hartree} is invariant under the scaling
\begin{equation*}
    u(x,t)\,\, \to \,\, \kappa^2 u(\kappa x,\kappa^2t),\qquad V(x,t)\to \kappa^2V(\kappa x,\kappa^2 t)
\end{equation*}
which leaves the $\dot{H}^{-1/2}$-norm of the initial data $u_0$ invariant. The homogeneous weighted norm scales according to
\[
\big\||x|^s\kappa^2u_0(\kappa x)\big\|_{L^2}=\kappa^{1/2-s}\big\||x|^su_0\big\|_{L^2}.
\]
Thus $s=1/2$ is the scaling-critical exponent for spatial localization, and the range in Theorem~\ref{thm:full_fractional} lies strictly above it.

In the focusing case, this threshold is sharp up to the endpoint. Indeed, fix $\lambda<0$ and let $Q$ be the radial, positive, smooth, exponentially decaying ground state satisfying
\[
-\Delta Q+Q=(4\pi)^{-1}\big(|x|^{-1}*Q^2\big)Q;
\]
see \cite{Lieb,MV}. For $\mu>0$, the rescaled ground state
\[
Q_\mu(x):=|\lambda|^{-1/2}\mu^2Q(\mu x)
\]
generates the standing-wave solution $u_\mu(x,t)=e^{-i\mu^2t}Q_\mu(x)$ of \eqref{Hartree}. For every $0\leq s<1/2$ and $0<\mu\leq1$,
\[
\|Q_\mu\|_{H^{0,s}}\leq|\lambda|^{-1/2}\mu^{1/2-s}\|Q\|_{H^{0,s}}\longrightarrow0\qquad\text{as }\mu\downarrow0.
\]
These solutions cannot satisfy \eqref{eq:fractional_asymptotics}; consequently, for $0\leq s<1/2$, arbitrarily small $H^{0,s}$ data need not exhibit the modified scattering asserted in Theorem \ref{thm:full_fractional}. 

The zero Sobolev index in $H^{0,\frac12+}$ is also natural. For the full-space equation, Galilean invariance combined with nonlinear phase separation yields failure of uniform continuity of the solution map on arbitrarily small $H^s(\R^3)$ balls whenever $s<0$. Thus $L^2$ is the threshold for local uniform well-posedness in the Sobolev scale; see Theorem \textcolor{labelkey}{4.1} in \cite{MXZ}.

Understanding the long-time dynamics for solutions of dispersive equations often requires control of scaling-critical or supercritical quantities which remain bounded over the evolution, especially for large-data problems. In the case of linear scattering, this can be provided by conservation laws or appropriate Strichartz norms. For modified scattering, this is often provided by variants of $\Vert\widehat{u}\Vert_{L^\infty}$ which scales like $\dot{H}^{-3/2}$.

One motivation for the present work is to identify useful bounded quantities at the scaling-critical level. Adapting the Fourier-based approach from \cite{KP11}, one may work with $\Vert u\Vert_Z:=\Vert\widehat{u}\Vert_{L^\infty_tL^3_\xi}$ (see \cite{ICERM}). Here, instead, we use an interaction functional in the spirit of \cite{PlaVeg} and rely on the uniform scale-invariant estimate $\sup_tt\Vert V\Vert_{\dot{H}^\frac{3}{2}}$.

\subsubsection{Hartree on domains with boundary}
Another motivation of the present work is to explore and understand the influence of unbounded boundaries on the asymptotic behavior of solutions to dispersive equations, partly informed by a similar study for kinetic equations \cite{HuPaSu}. 

Even for convex domains, the presence of a boundary introduces several difficulties:
\begin{itemize}
    \item Fourier-based methods become less effective, while even defining the linear flow and associated operators becomes more challenging.
    \item Many formulas associated to the asymptotic correction, see \eqref{AsymptoticProfile}, are naturally formulated on the asymptotic cone.
    \item In the absence of the usual symmetries, ``vector-fields'' operators become harder to use and control.
\end{itemize}
These difficulties are related for us since we consider a {\it long range} nonlinearity, and conjugating by the linearized flow still leads to a non-perturbative dynamics.

These are classical problems and some tools have been devised to account for it (see e.g. \cite{Const,PlaVeg}). Here we bypass the second difficulty by working in a cone, but we insist on methods less reliant on the Fourier transform. Even so, the precise choice of boundary condition is guided by two considerations: $(i)$ the ``minimal'' boundary conditions (Dirichlet or Neumann) for $u$ can be seamlessly accounted for by symmetrization; the real Robin condition is the next natural candidate that still leads to a self-adjoint operator, and $(ii)$ the system contains two Laplacians, whose boundary conditions must be compatible with the nonlinearity; the homogeneous Neumann condition for $V$ is chosen so that, for sufficiently regular $u$ and $V$, product rule gives
\begin{equation*}
    \partial_n\left(Vu\right)+q Vu=\left(\partial_nV\right) u+V\left(\partial_n u+qu\right)=0.
\end{equation*}
At the same time, these boundary conditions retain enough structure that, with additional work, the Galilean vector fields still provide useful estimates.

It remains an interesting problem to extend the theory of \eqref{Hartree} to more general domains, at least to smooth unbounded convex domains of the type considered in \cite{HuPaSu}, and to understand how different boundary conditions affect the analysis.


\subsection{Prior work}

Early work for the nonlinear Schr\"odinger equation with convolution nonlinearity
\begin{equation*}
    i\partial_tu-\Delta u =\lambda\big(|x|^{-\gamma}*|u|^2\big)u, \qquad (x,t)\in\R^3\times\R,
\end{equation*}
include \cite{CG,HayaNau2001,HNO,HayaTsu1987}. In particular, modified wave operators for a broad range of $\gamma$ were constructed in \cite{GinOz93,Gv98,Gv99,GV00}. For the three-dimensional Coulomb case, Nakanishi \cite{Nak02a,Nak02b} obtained modified wave operators in sharp weighted $L^2$ spaces. Later refinements and expositions on this problem can be found in \cite{GV151, GV15}.

A key inspiration for this work is the simple and elegant Fourier-based method developed in \cite{KP11}, based on the space-time resonance method of \cite{GMS}. Their approach gives a new proof of long-range scattering, together with an explicit formula for the logarithmic phase, for small initial data in $H^{2,0}\cap H^{0,2}$ in dimension three. More recently, \cite{Van} used the testing by wave packets method and proved modified scattering for $H^{0,\frac{3}{2}+}$ data.

In contrast to the extensive full-space theory, scattering for Hartree on \emph{domains with boundary} remains largely open. We refer to \cite{Ant,BGT,MaCao10,PlaVeg} for related advances in well-posedness and dispersive estimates. To the best of our knowledge, modified scattering for a time-dependent Hartree equation on a domain with boundary has not previously been established.

Related boundary-sensitive long-time problems also arise for kinetic equations with Coulomb fields, where the geometry of the domain affects both the particle characteristics and the Poisson field; see \cite{HuPaSu}. The full-space Vlasov-Hartree system in \cite{HX} provides a related study involving coupled long-range dynamics. 



\subsection{Main ideas and organization}\label{subsec:idea}
The proof of Theorem \ref{thm:dom} has two layers. We first develop a physical-space mechanism for the full-space equation based on a coupled estimate for a Galilean norm and an interaction quantity. We then construct a Robin extension and an adapted Galilean field that preserve this mechanism in the half-space.

\medskip
\noindent \paragraph{\bf The full-space mechanism.}

In the whole space $X=\mathbb{R}^3$, our approach is organized around a combination of the slow growth of a supercritical norm with a scale-invariant estimate for an interaction functional.

The supercritical norm controls the $L^2$ norm of the solution $u(t)$ and of the ``Galilean derivative'' of the solution,
\begin{equation}\label{eq:Galilean}
G(t)=-\nabla_p \Big |_{p=0} \mathcal{G}_p=2t\nabla_x+ix=2te^{-i\sigma(x,t)}\nabla_xe^{i\sigma(x,t)}, \qquad \sigma(x,t)=\frac{|x|^2}{4t},
\end{equation}
which is the infinitesimal generator of the Galilean symmetry of \eqref{Hartree} via the Galilean boost
\[
\mathcal{G}_pu(x,t):=e^{i[t|p|^2-x\cdot p]}u(x-2tp,t),\qquad p\in \R^3.
\]
Control of $Gu$ yields decay estimates for $u$ through the rescaled profile
\begin{equation}\label{RenormalizedProfileg}
    \begin{split}
        \nabla_y g(y,t)=\frac{1}{2}t^\frac{3}{2}\, e^{\,i\frac{|y|^2}{4}t}Gu(ty,t),\qquad g(y,t)\ :=\ t^{\frac{3}{2}}\,e^{\,i\frac{|y|^2}{4}t}\,u(ty,t).
    \end{split}
\end{equation}

Since \eqref{Hartree} is invariant under Galilean boost, $w=Gu$ solves the {\it linearized equation} at $u$:
\begin{equation}\label{LinH}
\begin{split}
\mathcal{L}_uw&:=\left(i\partial_t-\Delta+\lambda V\right)w+\lambda Wu=0,\qquad -\Delta W=2\Re\{\overline{u}w\},
\end{split}
\end{equation}
which admits the favorable energy identity
\begin{equation*}
\begin{split}
\frac{1}{2}\frac{d}{dt}\Vert w\Vert_{L^2}^2&=\Im\langle i\partial_tw,w\rangle =\frac{\lambda}{2\pi}\iint_{\mathbb{R}^3} \frac{\Re\{\overline{u}(x)w(x)\}\Im\{\overline{u}(y)w(y)\}}{\vert x-y\vert}\,dx\,dy.
\end{split}
\end{equation*}
Using the Cauchy-Schwarz inequality, one easily concludes the slow growth of $\Vert w\Vert_{L^2}$, assuming the scale-invariant bound (with $X=\mathbb{R}^3$)
\begin{equation}\label{eq:I}
    \begin{split}
        t^2I(t) \ll1,\qquad I(t) := \iint_{X\times X}\frac{|u(x,t)|^2\,|u(y,t)|^2}{|x-y|^2}\,dx\,dy.
    \end{split}
\end{equation}
Finally, the evolution of $I(t)$ has a complementary structure. More specifically, we observe that $t^2I(t)$ satisfies an ODE whose forcing involves $u$ and $Gu$ (see \eqref{eq:t2I}); the decay obtained from the Galilean estimate makes this forcing integrable in time.

Once the appropriate bootstrap norm is bounded, the asymptotics can be derived by a detailed analysis of the solution evaluated and renormalized along the rays $y=x/t$, see \eqref{eq:h} and the ``effective renormalized phase'' $tV(ty,t)$ (a proxy for $\theta$ in \eqref{eq:h}). Along the way, a new difficulty comes from the low regularity of the limits. For the renormalized profile, we obtain a quantitative convergence rate in a rough norm $H^{-1}$, which can compensate for slow growth in $H^{1/2}$ (see Proposition \ref{prop:L2_profile_convergence}); the phase is shown to converge nicely once we have isolated the equation \eqref{LaplacePhi} (see Lemma \ref{lem:potential_asymptotics}).

The approach above, which avoids the use of the Fourier transform\footnote{However, to compare to the formulas in previous works \cite{HN98,KP11,Van}, one can use the identification $\xi\sim x/t$ coming from the group-velocity heuristics.}, is well suited for domains with boundary, especially for conical domains where $y=x/t$ creates no additional difficulty.

The same viewpoint also motivated the second author's recent work with Huang on Vlasov-Hartree systems \cite{HX}. For Vlasov-Hartree systems, the potential acting on the Hartree component is generated by a kinetic density and must be analyzed simultaneously with the Vlasov evolution; hence, the physical-space argument developed here is an important ingredient. These examples illustrate the flexibility of the method for Hartree equations coupled to additional dynamics or posed in geometries where translation invariance is lost.

\medskip
\noindent\paragraph{\bf The Robin half-space} We next explain how our full-space argument is adapted to the half-space with a Robin boundary condition. For Dirichlet or Neumann boundary conditions, odd or even reflection reduces the problem directly to the full-space equation. The Robin condition is genuinely different: it is not described by a fixed parity under reflection, and the normal Galilean field $G_1$ does not preserve the boundary condition. We therefore introduce the Robin-adapted field
\[
Z_1(t):=U_q(t)(ix_1)U_q(-t),
\]
and estimate the nonlinear evolution of $Z_1u(t)$. A direct operator-level comparison between $Z_1$ and $G_1$ produces a boundary contribution at the \emph{same weighted order} as the leading Galilean term. In particular, this contribution cannot simply be discarded as a lower-order perturbation; its internal structure must be identified before the commutator with the Hartree potential can be estimated.

To accomplish this, we develop a representation of the Robin flow, motivated by Kaikina’s spectral formulation in \cite{Kaikina}, using a boundary-adapted extension that \emph{intertwines the Robin and free Schr\"odinger evolutions}. The resulting decomposition isolates the leading Galilean field and reveals a hidden cancellation in the boundary contribution. This allows us to close the nonlinear estimate for $Z_1u$ and shows that the full-space physical-space mechanism persists even when normal Galilean symmetry is lost.

\medskip

\noindent \paragraph{\bf Weaker localization in the full space.} The proof of Theorem \ref{thm:full_fractional} retains the same rescaled profile and logarithmic phase, but replaces the first-order Galilean estimate by a fractional one. The main additional step is a uniform $L^3$ bound for the profile $g$: a time-dependent frequency cutoff separates a small high-frequency remainder from a low-frequency part whose $L^3$ evolution has integrable errors. This bound closes the fractional Galilean estimate. The renormalized profile then converges in $H^{-2}$, and interpolation with its slowly growing positive Sobolev norm gives strong convergence and identifies the asymptotic phase.

\medskip

We conclude with the organization of the paper. Section \ref{sec:lin} develops the linear Robin theory and constructs the Robin extension and the intertwining identities with the full-space flow. Section \ref{sec:LWP} establishes the dispersive and Strichartz estimates for the Robin propagator. In Section \ref{sec:GWP}, we introduce the adapted Galilean field, close the coupled bootstrap for the Galilean norm and the interaction quantity, and prove global well-posedness. Section \ref{sec:asymptotics} constructs the logarithmic phase and proves modified scattering. These sections establish the Robin part of Theorem \ref{thm:dom}; its full-space specialization is recorded in Remark \ref{rmk:full}. We present this refinement separately in Appendix \ref{sec:fractional}, keeping the physical-space argument and its Robin implementation together in the main body.


\subsection*{Acknowledgments} This project originated in the meetings at the Brown–Yale PDE Seminar. B. Pausader was partially supported by NSF grant DMS-2452275. M. Xie thanks Adam Black, Wenrui Huang, and Wilhelm Schlag for many helpful discussions. M. Xie also gratefully acknowledges the hospitality of the Department of Mathematics at Brown University, where a substantial portion of this work was carried out.

\subsection*{Use of AI} The authors use AI to assist in reviewing previous work, performing checks of calculations, and polishing the manuscript. All mathematical arguments and computations are done and verified by the authors. 


\section{The Robin Laplacian and linear flow}\label{sec:lin}
In this section, we develop the linear framework used throughout the half-space analysis. We first give a precise construction of the Robin Laplacian and its unitary group. We then introduce a boundary-adapted extension that realizes the Robin evolution as the restriction of the free Schr\"odinger evolution on $\R^3$. 


\subsection{Notations}
For a point $x\in \dom$, we sometimes represent it as $x=(x_1,x')$ with $x_1>0$ and $x'=(x_2,x_3)\in\R^2$. We denote by $\res$ the restriction operator from $\R^3$ to $\dom$. The reflection of $x$ across $\partial\dom$ is denoted by $x^*=(-x_1,x')$. 

For a function $f$ on $\dom$, its even and odd extensions across the boundary are defined by 
\[
\even f(x_1,x')=f(|x_1|,x'),\qquad \odd f(x_1,x')=\sgn(x_1) f(|x_1|,x').
\]
In particular, the representation of solutions to the Neumann problem $-\Delta V=\vert u\vert^2$, $\partial_nV=0$ on $\partial\mathbb{R}^3_+$,
\begin{equation}\label{KnKernel}
    \begin{split}
        V(x,t)=\int_{\mathbb{R}^3_+}K_n(x,y)\vert u(y,t)\vert^2dy,\qquad K_n(x,y):=\frac{1}{4\pi}\left(\frac{1}{\vert x-y\vert}+\frac{1}{\vert x^\ast-y\vert}\right),
    \end{split}
\end{equation}
can be written equivalently as
\begin{equation}\label{eq:even-potential}
\even V =\frac{1}{4\pi}\int_{\R^3} \frac{|\even u(y)|^2}{|x-y|} dy
\qquad\text{on }\R^3.
\end{equation}

For later use, if $f,g:\R_+\to \C$, we denote their Volterra convolution also by the normal convolution $*$ and define it by
\[
f*g(a):=\int _0^a f(a-y)g(y)dy, \qquad a>0.
\]
Extending $f$ and $g$ by zero to the negative half-line identifies their Volterra convolution with the restriction of the usual one-dimensional convolution. Consequently, Young's inequality gives
\begin{equation}\label{eq:Volterra_Young}
\|f*g\|_{L^p(\R_+)} \leq \|f\|_{L^1(\R_+)}\|g\|_{L^p(\R_+)}, \qquad 1\leq p\leq\infty.
\end{equation}

The trace operator on $\partial\dom$ is denoted by $\tr$. We use the convention
\[
\langle f,g\rangle_{L^2(\dom)}=\int_{\dom}f\,\overline g\,dx,
\]
which is linear in the first argument. If $T$ is an operator, $D(T)$ denotes its domain. We write $A\lesssim_q B$ when the implicit constant may depend on the fixed Robin parameter $q>0$. We write $A\simeq B$ if both $A\lesssim B$ and $B\lesssim A$, and use $A\simeq_q B$ when the corresponding implicit constants may depend on $q$. For an interval $I\subset\R$, mixed norms are abbreviated as
\[
\|u\|_{L_t^aL_x^b(I\times\dom)}=\Big\|\|u(t,\cdot)\|_{L^b(\dom)}\Big\|_{L^a(I)}\,.
\]
We also set
\[
H^{0,1}(\dom)=\{f\in L^2(\dom):\langle x\rangle f\in L^2(\dom)\}, \qquad
\|f\|_{H^{0,1}}=\|\langle x\rangle f\|_{L^2},
\]
and denote the half space outward unit normal vector as $n=-e_1$. Finally, we write
\[
H_0=-\Delta,\qquad D(H_0)=H^2(\R^3), \qquad U_0(t)=e^{itH_0}.
\]
Thus $U_0(t)$ is the free Schr\"odinger group corresponding to the convention $i\partial_tu-\Delta u=0$.

\medskip

We conclude this subsection by recording several standard analytic facts that will be used repeatedly.

\paragraph{\bf Sobolev interpolation on the half-space.} We use the standard Sobolev spaces on $\dom$, with
\[
H^{-1}(\dom):=(H_0^1(\dom))^*,
\]
where $H_0^1(\dom)$ denotes the closure of $C_c^\infty(\dom)$ in $H^1(\dom)$. If $s_0<s_1$, $0<\vartheta<1$, and $s=(1-\vartheta)s_0+\vartheta s_1$, then 
\begin{equation}\label{eq:half-space-interpolation-identity}
 [H^{s_0}(\dom),H^{s_1}(\dom)]_\vartheta  =H^s(\dom)
\end{equation}
with equivalent norms, where $[\cdot,\cdot]_\vartheta$ denotes complex interpolation; see \cite[Theorem 2.10.1]{Triebel}. In particular, for $-1\leq s\leq 1$,
\begin{equation}\label{eq:half-space-Sobolev-interpolation}
 \|f\|_{H^s(\dom)}\lesssim\|f\|_{H^{-1}(\dom)}^{\frac{1-s}{2}}\|f\|_{H^1(\dom)}^{\frac{1+s}{2}}.
\end{equation}

\medskip

\paragraph{\bf Lorentz spaces.} For $1\leq p<\infty$ and $1\leq r\leq\infty$, we denote by $L^{p,r}(\Omega)$ the usual Lorentz space. If $f^*$ is the decreasing rearrangement of $f$, its Lorentz quasi-norm is given by
\[
 \|f\|_{L^{p,r}(\Omega)}:=\left(\int_0^\infty\bigl[s^{1/p}f^*(s)\bigr]^r\,\frac{ds}{s}\right)^{1/r},\qquad 1\leq r<\infty,
\]
with the usual modification
\[
 \|f\|_{L^{p,\infty}(\Omega)}:=\sup_{s>0}s^{1/p}f^*(s).
\]
In particular, $L^{p,p}=L^p$, and $L^{p,r_1}\hookrightarrow L^{p,r_2}$ whenever $r_1\leq r_2$. We shall use the following special cases of the Lorentz-space H\"older and interpolation inequalities:
\begin{equation}\label{eq:Lorentz-basic-estimates}
 \|fg\|_{L^2} \lesssim \|f\|_{L^{6,2}}\|g\|_{L^{3,\infty}},\qquad \||f|^2\|_{L^{3/2,1}}
 \lesssim \|f\|_{L^{3,2}}^2,
\end{equation}
and
\begin{equation}\label{eq:L32-interpolation}
 \|f\|_{L^{3,2}} \lesssim \|f\|_{L^2}^{1/2}\|f\|_{L^6}^{1/2}.
\end{equation}
Moreover, since $|x|^{-1}\in L^{3,\infty}(\R^3)$, the Lorentz-space convolution inequality gives the endpoint Coulomb estimates
\begin{equation}\label{eq:endpoint-Coulomb-Lorentz}
 \big\||x|^{-1}*f\big\|_{L^{3,\infty}(\R^3)} \lesssim \|f\|_{L^1(\R^3)}, \qquad \big\||x|^{-1}*f\big\|_{L^\infty(\R^3)} \lesssim \|f\|_{L^{3/2,1}(\R^3)}.
\end{equation}
Finally, the refined Sobolev embedding on $\R^3$, followed by extension across the boundary, yields
\begin{equation}\label{eq:Sobolev-Lorentz-half-space}
 \|f\|_{L^{6,2}(\dom)} \lesssim\|\nabla f\|_{L^2(\dom)}, \qquad f\in H^1(\dom).
\end{equation}
We refer to \cite{ONeil} for the general Lorentz-space H\"older and convolution inequalities and to \cite{Peetre} for the general refined Sobolev embedding.

\medskip
\paragraph{\bf Riesz transforms.} For $j=1,2,3$, let
\[
 R_j:=\partial_j|\nabla|^{-1}, \qquad |\nabla|:=(-\Delta)^{1/2},
\]
denote the $j$-th Riesz transform on $\R^3$. It is a Calder\'on--Zygmund operator with kernel $c\,\frac{x_j}{|x|^4}$ and is bounded on $L^p(\R^3)$ for every $1<p<\infty$. Consequently, for $1<p<\infty$ and $1\leq j,k\leq3$,
\begin{equation}\label{eq:Riesz-transform-bounds}
 \|R_jf\|_{L^p(\R^3)}+\|R_jR_kf\|_{L^p(\R^3)}\lesssim_p \|f\|_{L^p(\R^3)}.
\end{equation}
In particular,
\begin{equation}\label{eq:second-Riesz-bound}
 \|\nabla^2(-\Delta)^{-1}f\|_{L^p(\R^3)}\lesssim_p\|f\|_{L^p(\R^3)}.
\end{equation}
We cite \cite{Stein} for these properties. 


\subsection{The Robin Laplacian}\label{subsec:setup} 
For $q>0$, consider the sesquilinear form
\begin{equation}\label{eq:Robin_form}
\mathfrak h_q[f,g] =\int_\dom \nabla f\cdot\overline{\nabla g}\,dx
+q\int_{\partial\dom}\tr f\,\overline{\tr g}\,dS, \qquad D(\mathfrak h_q)=H^1(\dom).
\end{equation}
\begin{proposition}\label{prop:Robin_Laplacian}
The form $\mathfrak h_q$ is densely defined, closed, and nonnegative on $L^2(\dom)$. Its associated operator is the nonnegative self-adjoint operator
\begin{equation}\label{eq:Hq_def}
H_qf=-\Delta f, \qquad D(H_q)=\big\{f\in H^2(\dom):\tr(\partial_nf+qf)=0\big\},
\end{equation}
so that 
\begin{equation}\label{eq:Hq_form_domain}
D(H_q^{\frac{1}{2}})=H^1(\dom), \qquad \mathfrak h_q[f,g] =\langle H_q^{\frac{1}{2}}f,H_q^{\frac{1}{2}}g\rangle_{L^2(\dom)}.
\end{equation}
Consequently, 
\begin{equation}\label{eq:Uq}
U_q(t)=e^{itH_q},\qquad t\in\R,
\end{equation}
is a strongly continuous unitary group on $L^2(\dom)$. In particular,
\begin{equation}\label{eq:Uq_unitary}
U_q(t)^*=U_q(-t), \qquad \|U_q(t)f\|_{L^2(\dom)}=\|f\|_{L^2(\dom)}.
\end{equation}
The group preserves both $H^1(\dom)$ and $D(H_q)$, and
\begin{equation}\label{eq:Uq_commutative}
H_qU_q(t)f=U_q(t)H_qf,
\qquad f\in D(H_q).
\end{equation}
\end{proposition}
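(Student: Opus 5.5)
The plan is to follow the standard form-method route (Kato's representation theorem) and then identify the operator domain by elliptic regularity.

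\emph{Step 1: the form.} Nonnegativity is immediate since $q>0$. Density holds because $C_c^\infty(\dom)\subset H^1(\dom)$. For closedness we show that the form norm $\big(\mathfrak h_q[f,f]+\|f\|_{L^2}^2\big)^{1/2}$ is equivalent to the $H^1(\dom)$ norm. The lower bound is trivial. For the upper bound we use the trace inequality
\[
\|\tr f\|_{L^2(\partial\dom)}^2\le 2\|f\|_{L^2}\|\partial_1 f\|_{L^2},
\]
which follows from $|f(0,x')|^2=-\int_0^\infty\partial_1|f|^2\,dx_1$ for smooth compactly supported $f$ and extends by density. Since $H^1$ is complete, the form is closed. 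Kato's first representation theorem then yields a unique nonnegative self-adjoint operator $H_q$ with $D(H_q^{1/2})=H^1(\dom)$ and $\mathfrak h_q[f,g]=\langle H_q^{1/2}f,H_q^{1/2}g\rangle$. This gives \eqref{eq:Hq_form_domain}.

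\emph{Step 2: identification of $D(H_q)$.} By definition, $f\in D(H_q)$ with $H_qf=F$ if and only if $f\in H^1$ and $\mathfrak h_q[f,g]=\langle F,g\rangle$ for all $g\in H^1$.

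\emph{Inclusion $\supseteq$.} If $f\in H^2$ satisfies $\tr(\partial_nf+qf)=0$, Green's formula with $n=-e_1$ gives $\mathfrak h_q[f,g]=\langle -\Delta f,g\rangle$, so $f\in D(H_q)$ and $H_qf=-\Delta f$.

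\emph{Inclusion $\subseteq$.} This is the main obstacle, namely $H^2$ regularity up to the boundary. Testing with $g\in C_c^\infty(\dom)$ gives $-\Delta f=F$ in distributions. For tangential regularity we use difference quotients $D_h^{j}f$ in the $x'$ directions, $j=2,3$. These commute with the form, including the boundary term, because the boundary is flat. Testing with $g=D_{-h}^jD_h^jf$ yields $\|\nabla D_h^j f\|_{L^2}\lesssim\|F\|_{L^2}+\|f\|_{L^2}$ uniformly in $h$, hence $\partial_j\nabla f\in L^2$ for $j=2,3$. The equation then gives $\partial_1^2f=-F-\partial_2^2f-\partial_3^2f\in L^2$, so $f\in H^2(\dom)$. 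With $f\in H^2$ in hand, Green's formula gives
\[
\int_{\partial\dom}\tr(\partial_nf+qf)\,\overline{\tr g}\,dS=0\qquad\text{for all }g\in H^1.
\]
Since traces of $H^1$ functions are dense in $L^2(\partial\dom)$ (they contain $H^{1/2}$), the Robin condition follows. A shorter alternative is to take the partial Fourier transform in $x'$ and solve the resulting ODE in $x_1$ explicitly, but we use the difference-quotient argument.

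\emph{Step 3: the group.} Since $H_q$ is self-adjoint, Stone's theorem (equivalently, the spectral theorem) shows that $U_q(t)=e^{itH_q}$ is a strongly continuous unitary group with $U_q(t)^*=U_q(-t)$. This gives \eqref{eq:Uq} and \eqref{eq:Uq_unitary}. The functional calculus shows that $U_q(t)$ commutes with every Borel function of $H_q$, in particular with $H_q$ and $H_q^{1/2}$. Hence $U_q(t)$ maps $D(H_q)$ and $D(H_q^{1/2})=H^1(\dom)$ into themselves, and \eqref{eq:Uq_commutative} holds.
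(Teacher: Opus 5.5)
Your proposal is correct and follows the paper's route. You use the representation theorem for the closed form, identify $D(H_q)$ via $H^2$ regularity plus Green's formula, and then apply Stone's theorem and the functional calculus. The only difference is that you prove the boundary $H^2$ regularity yourself, by tangential difference quotients on the flat boundary, and you make the trace inequality explicit, where the paper simply cites standard elliptic regularity and the trace theorem.
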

\begin{proof}
The trace theorem gives
\[
\|\tr\, f\|_{L^2(\partial\dom)}\lesssim\|f\|_{H^1(\dom)},
\]
so \eqref{eq:Robin_form} is well defined on $H^1(\dom)$. Standard references include \cite{Evans}. It is densely defined since $C_c^\infty(\dom)$ is dense in $L^2(\dom)$, and it is nonnegative following from $q>0$. Furthermore, the form norm 
\[
\|f\|_{\mathfrak h_q}^2 :=\|f\|_{L^2(\dom)}^2+\mathfrak h_q[f,f]
\]
is equivalent to the usual $H^1(\dom)$ norm. Indeed,
\[
\|f\|_{H^1(\dom)}^2 \leq \|f\|_{\mathfrak h_q}^2 \lesssim_q \|f\|_{H^1(\dom)}^2.
\]
Since $H^1(\dom)$ is complete, $\mathfrak h_q$ is closed.   

The representation theorem for densely defined closed nonnegative forms therefore yields a unique nonnegative self-adjoint operator $H_q$ such that
\begin{equation}\label{eq:form-representation}
\mathfrak h_q[f,g]=\langle H_qf,g\rangle_{L^2(\dom)}, \qquad f\in D(H_q),\quad \text{and }\,g\in H^1(\dom),
\end{equation}
and also gives \eqref{eq:Hq_form_domain}. Thus, it remains to identify the operator domain. If $f\in D(H_q)$, testing \eqref{eq:form-representation} against functions in $C_c^\infty(\dom)$ shows that $H_qf=-\Delta f$ in the sense of distributions. The full variational identity \eqref{eq:form-representation} is the weak formulation of the Robin problem; standard elliptic regularity on the half-space therefore gives $f\in H^2(\dom)$; see, for example, \cite{McLean}. Integration by parts on $\dom$ then yields
\[
\mathfrak h_q[f,g] =\int_{\dom}(-\Delta f)\overline g\,dx +\int_{\partial\dom}(\partial_nf+qf)\,\overline{\tr g}\,dS, \qquad g\in H^1(\dom).
\]
Comparing this identity with \eqref{eq:form-representation}, and using the surjectivity of the trace map $H^1(\dom)\to H^{\frac{1}{2}}(\partial\dom)$, gives the boundary condition
\[
\tr(\partial_nf+qf)=0.
\]
Conversely, if $f\in H^2(\dom)$ with this boundary condition, integration by part again gives
\[
\mathfrak h_q[f,g]=\langle-\Delta f,g\rangle_{L^2(\dom)} \qquad\text{for every }g\in H^1(\dom);
\]
hence $f\in D(H_q)$ with $H_qf=-\Delta f$. This proves \eqref{eq:Hq_def}.

Finally, Stone's theorem gives the strongly continuous unitary group \eqref{eq:Uq} and the identities \eqref{eq:Uq_unitary}. The functional calculus for $H_q$ shows that the group preserves $D(H_q^{1/2})=H^1(\dom)$ and $D(H_q)$ and satisfies \eqref{eq:Uq_commutative}.
\end{proof}

We equip $D(H_q)$ with norm
\begin{equation}\label{eq:Hq_norm}
\|f\|_{D(H_q)} := \left( \|f\|_{L^2(\dom)}^2 +\|H_qf\|_{L^2(\dom)}^2
\right)^{\frac{1}{2}}.
\end{equation}
By the domain characterization \eqref{eq:Hq_def} and the standard elliptic estimate for the homogeneous Robin problem, this norm is equivalent to the $H^2(\dom)$ norm on $D(H_q)$:
\begin{equation}\label{eq:Hq_equivalence}
\|f\|_{D(H_q)}\simeq_q\|f\|_{H^2(\dom)}, \qquad f\in D(H_q).
\end{equation}
With Proposition \ref{prop:Robin_Laplacian}, we can now rigorously say that, for $f\in D(H_q)$, the function $u(t)=U_q(t)f$ belongs to
\[
C(\R;D(H_q))\cap C^1(\R;L^2(\dom))
\]
and is the unique strong solution of
\[
i\partial_tu-\Delta u=0,
\qquad \partial_nu=-qu,
\qquad u(0)=f.
\]
For general $f\in L^2(\dom)$, $U_q(t)f$ is the corresponding mild solution.


\subsection{The Robin extension and the intertwining identity}\label{subsec:robin_extension} 
The Robin flow can be related directly to the free evolution on $\R^3$ through an extension in the normal variable. For a function $f$ on $\dom$, define
\begin{equation}\label{eq:Robin-extension}
\robin f(x_1,x') =
\begin{cases}
f(x_1,x'),&x_1\geq0,\\[2mm]
f(-x_1,x')-2q\displaystyle\int_0^{-x_1} e^{q(x_1+y_1)}f(y_1,x')\,dy_1,&x_1<0.
\end{cases}
\end{equation}
By construction, $\res \robin=\operatorname{Id}$. 
\begin{lemma}\label{lem:robin_extension}
    For every $1\leq p\leq\infty$, the map $\mathcal E_q$ satisfies:
\begin{equation}\label{eq:Robin-extension-Lp}
    \|\mathcal E_qf\|_{L^p(\R^3)}\lesssim\|f\|_{L^p(\dom)}.
\end{equation}
It also maps $H^1(\dom)$ continuously into $H^1(\R^3)$ and the identity
\begin{equation}\label{eq:Robin_extension_odd}
    (q-\partial_1)\mathcal E_qf =\odd (q-\partial_1)f, \qquad f\in H^1(\dom),
\end{equation}
holds in $L^2(\R^3)$.
\end{lemma}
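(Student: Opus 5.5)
The plan is to reduce everything to one-dimensional estimates in the normal variable, with $x'$ treated as a parameter. For $x_1<0$ write $s=-x_1>0$, so that
\[
\robin f(-s,x')=f(s,x')-2q\int_0^s e^{-q(s-y_1)}f(y_1,x')\,dy_1=f(s,x')-2q\,(k*f(\cdot,x'))(s),
\]
where $k(a)=e^{-qa}$ on $\R_+$ and $*$ is the Volterra convolution of Section 2.1.

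\textbf{The $L^p$ bound.} Since $\|k\|_{L^1(\R_+)}=1/q$, the Young-type inequality \eqref{eq:Volterra_Young} gives
\[
\|2q\,k*f(\cdot,x')\|_{L^p(\R_+)}\le 2\|f(\cdot,x')\|_{L^p(\R_+)}
\]
for each fixed $x'$. Hence the restriction of $\robin f$ to $\{x_1<0\}$ has $L^p_{x_1}$ norm at most $3\|f(\cdot,x')\|_{L^p}$. Taking the $L^p_{x'}$ norm (or the supremum when $p=\infty$) and adding the trivial contribution from $x_1>0$ yields \eqref{eq:Robin-extension-Lp} with a constant independent of $q$.

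\textbf{The identity \eqref{eq:Robin_extension_odd}.} First take $f\in C_c^\infty(\overline{\dom})$. For $x_1<0$, differentiating under the integral sign gives
\[
\partial_1\robin f(x_1,x')=-(\partial_1f)(-x_1,x')+2q\,e^{2qx_1}f(-x_1,x')-2q^2\int_0^{-x_1}e^{q(x_1+y_1)}f(y_1,x')\,dy_1 .
\]
Here the boundary term arises from the upper limit $y_1=-x_1$, where the integrand equals $e^{0}f(-x_1,x')$; rewrite it as $2q f(-x_1,x')$ rather than $2q\,e^{2qx_1}f(-x_1,x')$. Therefore
\[
(q-\partial_1)\robin f=q f(-x_1)-2q^2\!\int+(\partial_1f)(-x_1)-2qf(-x_1)+2q^2\!\int=-\big((q-\partial_1)f\big)(-x_1,x'),
\]
which is exactly $\odd(q-\partial_1)f$ on $x_1<0$. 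On $x_1>0$ the identity is tautological.

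The main point is to check that no singular part sits on $\{x_1=0\}$, that is, that $\robin f$ is continuous across the boundary. This holds because the integral term vanishes as $x_1\to0^-$, so $\robin f(0^-,x')=f(0,x')$. Consequently the distributional derivative $\partial_1\robin f$ equals the piecewise derivative, and the identity holds for smooth $f$.

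\textbf{$H^1$ mapping.} For smooth $f$, the tangential derivatives commute with $\robin$, so $\partial_j\robin f=\robin\partial_jf$ for $j=2,3$, and the $L^2$ bound controls them. For the normal derivative, solve the identity for $\partial_1$:
\[
\partial_1\robin f=q\robin f-\odd(q-\partial_1)f .
\]
Both terms are in $L^2$, with norms $\lesssim_q\|f\|_{H^1}$. Thus
\[
\|\robin f\|_{H^1(\R^3)}\lesssim_q\|f\|_{H^1(\dom)}
\]
on the dense class $C_c^\infty(\overline{\dom})$, and $\robin$ extends continuously to $H^1(\dom)$. Since both sides of \eqref{eq:Robin_extension_odd} are continuous from $H^1(\dom)$ to $L^2(\R^3)$, the identity passes to the limit. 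This density-plus-continuity step is where care is needed, namely verifying that the $L^2$ continuity of $\robin$ guarantees that the $H^1$ extension agrees with the pointwise formula, but it is routine.
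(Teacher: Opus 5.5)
Your proposal is correct and follows essentially the same route as the paper: Young's inequality for the Volterra convolution in $x_1$ (with $x'$ frozen) gives the $L^p$ bound, and direct differentiation on $\{x_1<0\}$ gives the identity. The differences are minor: you get the $\partial_1$ bound by solving the identity for $\partial_1\robin f$ instead of applying Young to the differentiated formula, you make explicit the continuity across $\{x_1=0\}$ and the density argument that the paper leaves implicit, and your displayed formula for $\partial_1\robin f$ should carry $2q f(-x_1,x')$ rather than $2q e^{2qx_1}f(-x_1,x')$, as you correct in the next line.
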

\begin{proof}
    For $a>0$, formula \eqref{eq:Robin-extension} becomes
    \begin{equation}\label{eq:Robin-extension-a}
        \robin f(-a,x') =f(a,x')-2q\int_0^a e^{-q(a-y_1)}f(y_1,x')\,dy_1.
    \end{equation}
    This integral term is a Volterra-type convolution in the normal variable with
    \[
    k_q(r)=2qe^{-qr}\mathfrak{1}_{\{r>0\}}, \qquad \|k_q\|_{L^1(\R_+)}=2.
    \]
    Young's inequality in $x_1$, followed by integration in $x'$, proves \eqref{eq:Robin-extension-Lp}. For the $H^1$ bound, we first observe that the tangential differentiation commutes with the extension, so that 
    \[
    \big\|\partial_j \robin f\big\|_{L^2(\R^3)}\lesssim_q \|\partial_j f\|_{L^2(\dom)}, \qquad j=2,3.
    \]
    For the normal derivative, differentiating the extension formula directly yields, for $x_1<0$,
    \begin{equation*}
        \partial_1(\mathcal{E}_qf(x_1,x^\prime))= -\partial_1 f(-x_1,x') +2q f(-x_1,x') -2q^2\int_0^{-x_1}  e^{q(x_1+y_1)}f(y_1,x') \,dy_1.
    \end{equation*}
    The last term is again a one-sided convolution with an $L^1$ kernel, and Young's inequality therefore gives 
    \[
    \big\|\partial_1\robin f\big\|_{L^2(\R^3)} \lesssim_q \|f\|_{H^1(\dom)}.
    \]
    Hence, $\robin :H^1(\dom)\to H^1(\R^3)$ is bounded. For $x_1=-a<0$, direct computation gives
    \[
     (q-\partial_1)\robin f(-a,x') =-(q-\partial_1)f(a,x'),
    \] 
    whereas the two sides agree for $x_1>0$. This proves \eqref{eq:Robin_extension_odd}.
\end{proof}
The identity \eqref{eq:Robin_extension_odd} tells that the Robin boundary condition is converted into oddness after applying $q-\partial_1$. Since the free Schr\"odinger flow preserves oddness, this is the mechanism behind the intertwining representation below.

\begin{proposition}\label{prop:Robin_intertwining}
The extension $\robin$ maps $D(H_q)$ into $H^2(\R^3)$ and satisfies
\begin{equation}\label{eq:flow_intertwining}
    U_0(t)\robin=\robin U_q(t), \qquad\text{on }L^2(\dom),\quad \text{for }t\in \R.
\end{equation}
Consequently, we have
\begin{equation}\label{eq:intertwining}
    H_0\robin f=\robin H_qf, \qquad f\in D(H_q),
\end{equation}
and
\begin{equation}\label{eq:Robin_free_representation}
    U_q(t)=\res U_0(t)\mathcal E_q.
\end{equation}
\end{proposition}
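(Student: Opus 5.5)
The plan is to show first that $\robin$ maps $D(H_q)$ into $H^2(\R^3)$ and intertwines the generators, that is \eqref{eq:intertwining}. The flow identity \eqref{eq:flow_intertwining} then follows by a uniqueness argument, and \eqref{eq:Robin_free_representation} comes from applying $\res$ and using $\res\robin=\mathrm{Id}$.

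\textbf{Step 1: regularity.} Let $f\in D(H_q)$, so $f\in H^2(\dom)$ with $\tr(\partial_n f+qf)=0$. Since $n=-e_1$, this reads $\tr(q-\partial_1)f=0$. Set $g:=(q-\partial_1)f\in H^1(\dom)$. Because $\tr g=0$, the odd extension $\odd g$ lies in $H^1(\R^3)$. By Lemma \ref{lem:robin_extension}, $\robin f\in H^1(\R^3)$ and $(q-\partial_1)\robin f=\odd g\in H^1(\R^3)$.

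The operator $q-\partial_1$ has Fourier symbol $q-i\xi_1$, and $|q-i\xi_1|\gtrsim_q\langle\xi_1\rangle$. Hence $\odd g\in H^1$ gives $\robin f\in H^1$, tangential derivatives of $\robin f$ in $H^1$, and $\partial_1\robin f\in H^1$. It follows that $\robin f\in H^2(\R^3)$, with norm $\lesssim_q\|f\|_{H^2(\dom)}$.

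\textbf{Step 2: generator intertwining.} We have $\Delta$ commuting with $q-\partial_1$, and $\Delta\odd g=\odd\Delta g$ for $g\in H^1$ with zero trace and $\Delta g\in H^{-1}$. The latter holds distributionally: odd reflection preserves the Laplacian exactly when the trace vanishes, with no single-layer term. Therefore
\[
(q-\partial_1)\bigl(H_0\robin f\bigr)=\odd\,(q-\partial_1)H_qf=(q-\partial_1)\robin H_qf .
\]
Since $q-\partial_1$ is injective on tempered distributions whose Fourier transform is a function (its symbol never vanishes), we get $H_0\robin f=\robin H_qf$. One can avoid the $H^{-1}$ subtlety by first taking $f$ in $D(H_q^2)$, or by a direct pointwise check on smooth compactly supported $f$ satisfying the Robin condition, and then closing by density using \eqref{eq:Hq_equivalence} and \eqref{eq:Robin-extension-Lp}.

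\textbf{Step 3: flows.} For $f\in D(H_q)$, put $v(t):=\robin U_q(t)f$. Proposition \ref{prop:Robin_Laplacian} gives $U_q(t)f\in C^1(\R;L^2(\dom))\cap C(\R;D(H_q))$. Since $\robin$ is bounded on $L^2$ and, by Step 1, from $D(H_q)$ to $H^2(\R^3)$, we obtain $v\in C^1(\R;L^2)\cap C(\R;H^2)$. Step 2 then gives $i\partial_t v=\robin H_qU_q(t)f=H_0v$, so $i\partial_t v=H_0 v$ with $v(0)=\robin f$. Uniqueness of strong solutions for the free group yields $v(t)=U_0(t)\robin f$. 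For instance, differentiate $U_0(-t)v(t)$ and find that it is constant.

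By density of $D(H_q)$ in $L^2(\dom)$ and $L^2$-boundedness of $\robin$, $U_0$ and $U_q$, identity \eqref{eq:flow_intertwining} holds on all of $L^2(\dom)$. Applying $\res$ and using $\res\robin=\mathrm{Id}$ gives \eqref{eq:Robin_free_representation}.

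The main obstacle is Step 2, specifically justifying $\Delta\odd g=\odd\Delta g$ in the low-regularity setting. I would handle it via the density and closedness argument indicated there.
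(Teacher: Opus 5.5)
Your proof is correct, but it runs in the opposite direction from the paper's.

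\emph{The paper's route.} It never verifies the generator identity directly. It sets $w(t)=U_0(t)\robin f$ and notes that $q-\partial_1$ commutes with $U_0(t)$. Hence $(q-\partial_1)w(t)=U_0(t)\odd(q-\partial_1)f$ stays odd and in $H^1$, so $\res w(t)$ is a strong solution of the Robin problem. Uniqueness for the Robin initial-boundary value problem then gives $\res U_0(t)\robin f=U_q(t)f$. The full-space identity is recovered from a characterization of $\robin$: if $F\in H^1$, $\res F=g$ and $(q-\partial_1)F$ is odd, then $F=\robin g$, proved by solving a first-order ODE in the reflected variable. Finally, \eqref{eq:intertwining} comes from differentiating at $t=0$.

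\emph{Your route.} You prove \eqref{eq:intertwining} first, using $\Delta\odd g=\odd\Delta g$ for zero-trace $g$ and injectivity of $q-\partial_1$ on $L^2$. Then you only need the trivial uniqueness for the free equation on $\R^3$.

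\emph{Comparison.} Your argument needs neither the characterization lemma nor Robin uniqueness. The price is a low-regularity distributional identity. Your fallback handles it cleanly: on $D(H_q^2)$ one has $H_qf\in H^2(\dom)$, so Lemma~\ref{lem:robin_extension} applies to $H_qf$, and $\Delta g=(q-\partial_1)\Delta f\in L^2$. Both sides of \eqref{eq:intertwining} are continuous in the graph norm, so density of $D(H_q^2)$ in $D(H_q)$ closes the argument. A still more direct variant is available: since $\robin f\in H^2(\R^3)$, $\Delta\robin f$ carries nothing on $\{x_1=0\}$ and can be computed separately on each open half-space, where the Robin condition kills the boundary term. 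The paper's route is more PDE-flavoured and yields $\res U_0(t)\robin f=U_q(t)f$ as an intermediate statement.

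\emph{A gap in Step 1.} The bound $|q-i\xi_1|\gtrsim_q\langle\xi_1\rangle$ only gives $\langle\xi\rangle\langle\xi_1\rangle\widehat{\robin f}\in L^2$. This controls $\partial_1\nabla\robin f$ but not $\partial_j\partial_k\robin f$ for $j,k\in\{2,3\}$: near $\xi_1\approx 0$ with $|\xi'|$ large you only get one derivative. For the purely tangential second derivatives you need what the paper uses, namely $\partial_j\robin f=\robin\partial_jf$ with $\partial_jf\in H^1(\dom)$, together with the $H^1$-boundedness in Lemma~\ref{lem:robin_extension}.

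\emph{A sign slip in Step 3.} With $U_q(t)=e^{itH_q}$ one has $i\partial_t v=-H_0v$, not $H_0v$. The same convention governs $U_0$, so nothing else changes.
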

\begin{proof}
    Let $f\in D(H_q)$. Then $f\in H^2(\dom)$ and the boundary condition together with the trace theorem give
    \[
    \tr \big((q-\partial_1)f \big)=0.
    \]
    Therefore the odd extension of $(q-\partial_1)f$ does not introduce singularities and $\odd(q-\partial_1)f$ belongs to $H^1(\R^3)$. \eqref{eq:Robin_extension_odd} then gives
    \[
    (q-\partial_1)\mathcal E_qf =\odd (q-\partial_1)f \in H^1(\R^3).
    \]
    Since $\robin f\in H^1(\R^3)$ from Lemma \ref{lem:robin_extension}, we have
    \[
    \partial_1 \robin f\in H^1(\R^3). 
    \]
    In the tangential directions, $\partial_j$ commutes with $\robin$. Therefore, we have $\robin f\in H^2(\R^3)$. Moreover, Lemma \ref{lem:robin_extension} yields
    \[
    \|\robin f\|_{H^2(\R^3)} \lesssim_q \|f\|_{H^2(\dom)}.
    \]

    Now we define $w(t)=U_0(t)\robin f$. Then the boundedness of $\robin: D(H_q)\to H^2(\R^3)$ gives
    \[
    w\in C\big(\R; H^2(\R^3)\big)\cap C^1\big(\R; L^2(\R^3)\big),
    \]
    and $w$ solves the free Schr\"odinger equation on $\R^3$ with initial data $\robin f$. Since $q-\partial_1$ commutes with $U_0(t)$, \eqref{eq:Robin_extension_odd} implies
    \[
    (q-\partial_1) U_0(t)\robin f=U_0(t)\odd (q-\partial_1)f.
    \]
    Since the free Schr\"odinger propagator preserves oddness in the normal direction, $(q-\partial_1) w(t)$ is odd in $x_1$. The fact that $(q-\partial_1) w(t)\in H^1(\R^3)$ and the trace theorem hence gives
    \[
    \partial_n w(t, 0,x')=-qw(t,0,x').
    \]
    It then follows that $\res w(t)$ is a strong solution to 
    \[
    i\partial_t v-\Delta v=0\,\, \text{in }\dom, \qquad \partial_n v=-qv\,\, \text{on }\partial\dom, \qquad v(t=0)=f.
    \]
    By uniqueness of the strong Robin initial-boundary value problem, we have
    \begin{equation}\label{eq:half_space_equivalence}
        \res U_0(t) \robin f=U_q(t)f, \qquad \text{for }f\in D(H_q).
    \end{equation}
    It remains to recover the identity on the whole space. We use the following characterization of the Robin extension: if $F\in H^1(\R^3)$ satisfies $\res F=g$ and $(q-\partial_1)F$ is odd in $x_1$, then $F=\robin g$. Indeed, one may write
    \[
    W(a,x')=F(-a,x'), \qquad \text{for }a>0;
    \]
    oddness then gives
    \[
    \partial_a W+qW=\partial_a g-qg, \qquad W(0,x')=g(0,x').
    \]
    Solving this first-order equation yields expression
    \[
    F(-a,x')=W(a,x')=g(a,x')-2q\int_0^{a} e^{-q(a-y_1)}g(y_1,x')\,dy_1,
    \]
    which is precisely the definition for $\robin g$. Applying this characterization to $w(t)$, \eqref{eq:half_space_equivalence} gives
    \[
    U_0(t)\robin f=\robin U_q(t)f, \qquad \text{for }f\in D(H_q).
    \]
    Since $\robin f\in H^2=D(H_0)$, differentiating this identity at $t=0$ gives \eqref{eq:intertwining}. Since $D(H_q)$ is dense in $L^2(\dom)$ and $\robin, U_0(t)$ and $U_q(t)$ are bounded on the relevant $L^2$ spaces, this flow identity extends to \eqref{eq:flow_intertwining}. With $\res \robin=\operatorname{Id}$, we get \eqref{eq:Robin_free_representation}. 
\end{proof}
\begin{remark}
The choice of sign $q>0$ is necessary in the exponentially decaying kernel $e^{-q(a-y_1)}$ in \eqref{eq:Robin-extension-a}. Its integrability is what makes $\mathcal E_q$ bounded on all $L^p$ spaces. For $q=0$, \eqref{eq:Robin-extension} reduces to the even extension associated with the Neumann boundary condition.
\end{remark}


\section{Dispersive estimates and local well-posedness}\label{sec:LWP}
In this section, we derive the dispersive and Strichartz estimates for the Robin propagator from the intertwining identity obtained in Section \ref{sec:lin}. We then use these estimates to establish the local theory for the nonlinear problem.

\subsection{Dispersive and Strichartz estimates}\label{subsec:strichartz}
We begin with the dispersive estimate.
\begin{proposition}[Dispersive decay]\label{prop:dispersive_decay}
    For every $t\neq 0$ and $f\in L^1(\dom)$, one has
    \begin{equation}\label{eq:dispersive_decay}
        \big\|U_q(t)f\big\|_{L^\infty(\dom)}\lesssim |t|^{-\frac{3}{2}} \|f\|_{L^1(\dom)}. 
    \end{equation}
\end{proposition}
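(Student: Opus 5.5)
The plan is to derive the dispersive estimate from the representation \eqref{eq:Robin_free_representation}, $U_q(t)=\res U_0(t)\robin$, together with the free dispersive estimate on $\R^3$. Since $\res$ is trivially bounded from $L^\infty(\R^3)$ to $L^\infty(\dom)$, the chain
\[
\|U_q(t)f\|_{L^\infty(\dom)}\le \|U_0(t)\robin f\|_{L^\infty(\R^3)}\lesssim |t|^{-\frac32}\|\robin f\|_{L^1(\R^3)}\lesssim |t|^{-\frac32}\|f\|_{L^1(\dom)}
\]
would give \eqref{eq:dispersive_decay}. The middle step is the classical kernel bound $|(4\pi i t)^{-3/2}e^{\mp i|x-y|^2/4t}|\lesssim|t|^{-3/2}$ for $U_0(t)$, valid on $L^1\cap L^2(\R^3)$. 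The last step is the case $p=1$ of \eqref{eq:Robin-extension-Lp}. This case rests on $\|k_q\|_{L^1(\R_+)}=2$ and the Volterra Young inequality \eqref{eq:Volterra_Young}, and it gives $\|\robin f\|_{L^1(\R^3)}\le 4\|f\|_{L^1(\dom)}$, with a constant independent of $q$.

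The only real care needed is that the intertwining identity was proved on $L^2(\dom)$, whereas the estimate is stated for $f\in L^1(\dom)$. I would therefore first prove the bound for $f\in L^1\cap L^2(\dom)$. For such $f$, Lemma \ref{lem:robin_extension} gives $\robin f\in L^1\cap L^2(\R^3)$. Hence $U_0(t)\robin f$ is given by the absolutely convergent kernel integral, and \eqref{eq:Robin_free_representation} holds as an identity in $L^2(\dom)$, so the pointwise bound holds almost everywhere.

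To pass to general $f\in L^1(\dom)$, I would argue by density. On the dense subspace $L^1\cap L^2$, $U_q(t)$ is bounded $L^1\to L^\infty$ for $t\neq0$, so it extends uniquely by continuity. Equivalently, one can define $U_q(t)f:=\res U_0(t)\robin f$ directly via the explicit kernel
\[
K_t(x,y)=\res_x\big[\text{free kernel composed with }\robin\big],
\]
and this is consistent with the $L^2$ definition on the intersection.

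I do not expect any genuine obstacle. The key point is that $q>0$ makes the Volterra kernel integrable, so that $\robin$ is bounded on $L^1$ (see the Remark following Proposition \ref{prop:Robin_intertwining}).
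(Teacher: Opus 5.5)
Your proposal is correct and follows the paper's proof exactly: you write $U_q(t)=\res U_0(t)\robin$, apply the free $L^1\to L^\infty$ dispersive bound, and finish with the $p=1$ case of \eqref{eq:Robin-extension-Lp}. Your density step from $L^1\cap L^2(\dom)$ to $L^1(\dom)$, and your explicit constant $4$, are harmless refinements that the paper leaves implicit.
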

\begin{proof}
    By \eqref{eq:Robin_free_representation}, the standard dispersive estimate of $U_0$ and \eqref{eq:Robin-extension-Lp}, we get
    \begin{align*}
        \big\|U_q(t)f\big\|_{L^\infty(\dom)}\,\leq\, \big\|U_0(t)\robin f\big\|_{L^\infty(\R^3)}\,\lesssim\, |t|^{-\frac{3}{2}} \big\|\robin f\big\|_{L^1(\R^3)}\,\lesssim\, |t|^{-\frac{3}{2}} \|f\|_{L^1(\dom)}.
    \end{align*}
\end{proof}

The Strichartz estimates follow. We say that a pair $(p,r)$ is Schr\"odinger-admissible in dimension three if
\[
2\leq p, r\leq \infty, \qquad 2/p+3/r=3/2.
\]
We denote the conjugate exponents of $p$ and $r$ by $p'$ and $r'$, respectively.
\begin{proposition}[Strichartz estimate]\label{prop:strichartz}
    Let $I\subset \R$ be an interval, and let $(p,r)$ and $(\tilde p, \tilde r)$ both be Schr\"odinger-admissible pairs. Then the following estimates hold, with constants independent of $I$. 
    \begin{equation}\label{eq:strichartz_homo}
        \big\|U_q(t)f\big\|_{L^p_t L^r_x(I\times \dom)} \lesssim \|f\|_{L^2(\dom)},
    \end{equation}
    \begin{equation}\label{eq:strichartz_inhomo}
        \left\|\int_I U_{q}(t-s) F(s)ds \right\|_{L^p_t L^r_x(I\times \dom)} \lesssim \big\|F\big\|_{L^{\tilde p'}_t L^{\tilde r'}_x(I\times \dom)}\,,
    \end{equation}
    and 
    \begin{equation}\label{eq:strichartz_retarded}
        \left\|\int_{I\cap \{s<t\}} U_q(t-s)F(s)ds \right\|_{L^p_t L^r_x(I\times \dom)} \lesssim \big\|F\big\|_{L^{\tilde p'}_t L^{\tilde r'}_x(I\times \dom)}\,.
    \end{equation}
\end{proposition}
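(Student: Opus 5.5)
The plan is to deduce all three estimates from the abstract Keel--Tao theorem, using only the two ingredients already in hand. By Proposition \ref{prop:Robin_Laplacian}, $U_q(t)$ is a unitary group on $L^2(\dom)$ with $U_q(t)^*=U_q(-t)$. This gives the energy estimate $\|U_q(t)f\|_{L^2}=\|f\|_{L^2}$ and lets us write $U_q(t)U_q(s)^*=U_q(t-s)$. Proposition \ref{prop:dispersive_decay} then supplies the untruncated decay $\|U_q(t)U_q(s)^*g\|_{L^\infty(\dom)}\lesssim|t-s|^{-3/2}\|g\|_{L^1(\dom)}$. These are exactly the hypotheses of Keel--Tao with $\sigma=3/2$ on the measure space $\dom$. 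That theorem is purely abstract and does not use translation invariance. It yields the homogeneous estimate \eqref{eq:strichartz_homo} for every admissible pair, including the endpoint $(2,6)$ since $\sigma>1$. It also yields the retarded estimate \eqref{eq:strichartz_retarded} for every pair of admissible pairs. Restricting time to $I$, i.e.\ extending $F$ by zero outside $I$, shows the constants are independent of $I$.

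For \eqref{eq:strichartz_inhomo} I would use the standard $TT^*$ factorization. Write $\int_I U_q(t-s)F(s)\,ds=U_q(t)\int_I U_q(-s)F(s)\,ds$. The dual of \eqref{eq:strichartz_homo} for the pair $(\tilde p,\tilde r)$ bounds $\|\int_I U_q(-s)F(s)\,ds\|_{L^2(\dom)}$ by $\|F\|_{L^{\tilde p'}_tL^{\tilde r'}_x}$. Applying \eqref{eq:strichartz_homo} with $(p,r)$ then finishes the estimate. The duality is justified because $U_q(s)^*=U_q(-s)$ on $L^2(\dom)$ and the pairing is the $L^2(\dom)$ one.

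A shortcut exists via $U_q(t)=\res U_0(t)\robin$ and the free Strichartz estimates for the homogeneous estimate. It fails for the inhomogeneous ones, though. Transferring them would require $\robin$ to commute appropriately with the dual pairing, and $\robin^*$ is not simply a restriction. So I would rely on the abstract route for all three estimates.

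The only delicate point is the endpoint retarded estimate with $p=\tilde p=2$. There the Christ--Kiselev lemma does not apply, and one genuinely needs the bilinear interpolation argument of Keel--Tao. Since that theorem is stated for abstract families satisfying the energy and decay bounds, citing it handles the endpoint directly. I expect no further obstacle, beyond checking measurability and the density argument: prove the estimates for $F$ in a dense class of simple $L^1\cap L^2$-valued functions, then extend by continuity.
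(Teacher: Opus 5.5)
Your proposal is correct and is essentially the paper's proof. Unitarity of $U_q$ and the dispersive bound of Proposition \ref{prop:dispersive_decay} give the hypotheses of the abstract Keel--Tao theorem with $\sigma=3/2$, which yields the homogeneous and retarded estimates (endpoint included), and the nonretarded estimate follows from the homogeneous one and its dual.

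One side remark is mistaken, though it does not affect your argument: the shortcut through the extension does work for the inhomogeneous estimates. Since \eqref{eq:Robin_free_representation} holds for every real time, $\int_I U_q(t-s)F(s)\,ds=\res\int_I U_0(t-s)\robin F(s)\,ds$. The free nonretarded and retarded estimates, together with the $L^{\tilde r'}$-boundedness \eqref{eq:Robin-extension-Lp} of $\robin$, then close without ever needing $\robin^*$.
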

\begin{proof}
    Since $H_q$ is self-adjoint, Proposition \ref{prop:Robin_Laplacian} gives
    \[
    U_q(t)U_q(s)^*=U_q(t-s);
    \]
    hence Proposition \ref{prop:dispersive_decay} gives
    \[
    \big\| U_q(t)U_q(s)^* f\big\|_{L^\infty(\dom)}\lesssim |t-s|^{-\frac{3}{2}}\|f\|_{L^1(\dom)}.
    \]
    Together with the unitarity of $U_q(t)$ on $L^2(\dom)$, these are precisely the hypotheses of the abstract Keel-Tao Strichartz theorem \cite{KeelTao}. It therefore gives the homogeneous estimate \eqref{eq:strichartz_homo} and the retarded estimate \eqref{eq:strichartz_retarded} for the full admissible range, including the endpoint $(p,r)=(\widetilde p,\widetilde r)=(2,6)$. The nonretarded estimate follows from the homogeneous estimate and its dual. 
\end{proof}


\subsection{Local well-posedness}\label{subsec:LWP}
We now apply the Strichartz estimates in Proposition \ref{prop:strichartz} to the nonlinear problem for small time. Solutions of \eqref{Hartree}-\eqref{BC} with $L^2$-data are understood in the Duhamel sense: $u\in C^0_tL^2_x\cap L^4_tL^3_x$ and for $0\le t\le T$,
\begin{equation}\label{eq:duhamel_dom}
    u(t)=U_q(t)u_0+i\lambda\int_0^t U_q(t-s) \left(V(s)u(s)\right) ds.
\end{equation}

\begin{proposition}[Local well-posedness]\label{prop:LWP}
    Fix $q>0$ and $\lambda\in \R$. For every $u_0\in L^2(\dom)$, there exists $T>0$, depending only on $\|u_0\|_{L^2(\dom)}$, such that \eqref{Hartree}-\eqref{BC} admits a unique solution in the Duhamel sense \eqref{eq:duhamel_dom} for time $[0,T]$ and 
    \[
    u\in C([0,T]; L^2(\dom))\cap L^4([0,T]; L^3(\dom)),
    \]
    with conserved mass:
    \begin{equation}\label{eq:mass}
    \mathcal{M}(u(t)):=\int_{\dom} |u(x,t)|^2\,dx=\int_{\dom}\vert u_0\vert^2\,dx.
\end{equation}
    If, in addition, $u_0\in D(H_q)$, then $T$ may be chosen so that the same solution satisfies
    \[
    u\in C([0,T]; D(H_q))\cap C^1([0,T]; L^2(\dom)),
    \]
    and has conserved energy
    \begin{equation}\label{eq:energy}
    \mathcal{E}(u(t)):=\frac{1}{2}\int_\dom |\nabla u(x,t)|^2 dx+\frac{q}{2} \int_{\partial\dom} |u(x,t)|^2 dS+\frac{\lambda}{4}\int _\dom V(x,t)|u(x,t)|^2 dx.
\end{equation}
    In particular, we have $u\in C([0,T]; H^2(\dom))$ and, for every $t\in [0,T]$, 
    \[
    \tr \left(\partial_n u(t)+qu(t)\right)=0. 
    \]
\end{proposition}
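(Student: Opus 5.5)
We run a contraction argument for the Duhamel map
\[
\Phi(u)(t)=U_q(t)u_0+i\lambda\int_0^tU_q(t-s)\big(V(s)u(s)\big)\,ds
\]
in $X_T:=C([0,T];L^2(\dom))\cap L^4([0,T];L^3(\dom))$, using the Strichartz estimates of Proposition \ref{prop:strichartz} with the admissible pair $(4,3)$. The nonlinear term is the key estimate. By \eqref{eq:even-potential}, $\even V$ is the full-space Coulomb potential of $|\even u|^2$. Hardy--Littlewood--Sobolev then gives
\[
\|V\|_{L^\infty(\dom)}\le\|\even V\|_{L^\infty}\lesssim\||\even u|^2\|_{L^{3/2}}\lesssim\|u\|_{L^3(\dom)}^2 .
\]
Placing $Vu$ in the dual space $L^1_tL^2_x$, we obtain
\[
\|Vu\|_{L^1_TL^2_x}\lesssim \int_0^T\|u\|_{L^3}^2\|u\|_{L^2}\,dt\le T^{1/2}\|u\|_{L^4_TL^3_x}^2\|u\|_{L^\infty_TL^2_x}.
\]
The differences are handled the same way by trilinearity. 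So on a ball of radius $2C\|u_0\|_{L^2}$, the map $\Phi$ is a contraction once $T^{1/2}\|u_0\|_{L^2}^2\ll1$. This makes $T$ depend only on $\|u_0\|_{L^2}$. Uniqueness in the full class $X_T$ follows by the same difference estimate on short subintervals, using the continuity of the $L^4L^3$ norm.

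Mass conservation, and the regularity claims, are proved first for $u_0\in D(H_q)$ and then passed to general data by approximation.

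\textbf{Persistence of regularity.} For $u_0\in D(H_q)$, we rerun the fixed point in $C([0,T];D(H_q))$. This is an algebra-type estimate:
\[
\|H_q(Vu)\|_{L^2}\lesssim\|Vu\|_{H^2}\lesssim\|V\|_{W^{2,3}\cap W^{1,\infty}\cap L^\infty}\,\|u\|_{H^2}.
\]
The potential is controlled via the even extension and \eqref{eq:second-Riesz-bound}: $\nabla^2\even V$ is bounded in $L^p$ by $\||u|^2\|_{L^p}$, and $\nabla V\in L^\infty$ by Sobolev. The boundary condition needed for $Vu\in D(H_q)$ is exactly the product-rule identity $\partial_n(Vu)+qVu=0$ recorded in the introduction. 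It uses $\partial_nV=0$, which holds since $\even V$ is even and $C^1$.

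On $D(H_q)$ we must not lose the dependence of $T$ on the $L^2$ norm only. To handle this, we use a Gronwall-type bound, linear in $\|u\|_{H^2}$, in which the coefficient involves only $\|u\|_{L^3}^2$ and $\|u\|_{H^1}$-type quantities. Alternatively, we differentiate in time: $\partial_tu$ solves a linearized equation whose $L^2$ Strichartz bound closes on the same $T$. Elliptic regularity \eqref{eq:Hq_equivalence} then recovers $H^2$ from $\partial_tu$ and the equation. This is the main obstacle; I would try the $\partial_tu$ route first. It needs only the $L^2$-level estimates of $W=(-\Delta_N)^{-1}2\Re(\bar u\partial_tu)$, which are identical to those above.

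Once $u\in C([0,T];D(H_q))\cap C^1([0,T];L^2)$ is established, $u$ is a strong solution, and the trace statement follows from $u(t)\in D(H_q)$ and \eqref{eq:Hq_def}.

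\textbf{Conservation laws.} Mass: since $\lambda V$ is real, $\frac{d}{dt}\|u\|^2=2\Im\langle H_qu-\lambda Vu,u\rangle=0$ by self-adjointness. Energy: we compute $\frac{d}{dt}\mathcal E=\Re\langle H_qu-\tfrac{\lambda}{2}Vu\cdot 2/2\cdots,\partial_tu\rangle$ using the form identity \eqref{eq:Hq_form_domain} for the gradient and boundary terms. For the potential term we use the symmetry of the Neumann kernel $K_n$ in \eqref{KnKernel}, which gives $\frac{d}{dt}\frac14\int V|u|^2=\frac12\int V\,\partial_t|u|^2$. The result is
\[
\frac{d}{dt}\mathcal E=\Re\langle H_qu+\lambda Vu,\partial_tu\rangle=\Re\langle -i\partial_tu,\partial_tu\rangle=0 .
\]
Finally, for $L^2$ data we obtain mass conservation by approximating $u_0$ in $L^2$ by $D(H_q)$ data and using the Lipschitz dependence of the solution in $X_T$ from the contraction.
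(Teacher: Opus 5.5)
Your contraction scheme is sound, but the inequality that drives it is false. The bound $\|\even V\|_{L^\infty}\lesssim\||\even u|^2\|_{L^{3/2}}$ is the excluded endpoint of Hardy--Littlewood--Sobolev: $|x|^{-1}\in L^{3,\infty}$, not $L^3$. The true endpoint, \eqref{eq:endpoint-Coulomb-Lorentz}, requires $|u|^2\in L^{3/2,1}$, i.e.\ $u\in L^{3,2}$, and $L^4_tL^3_x$ does not provide this. For a counterexample, fix an interior point $y_0$ and set $|u(y)|^2=|y-y_0|^{-2}\big(\log\frac{1}{|y-y_0|}\big)^{-1}$ on $|y-y_0|<\frac12$, with $u=0$ elsewhere. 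Then $u\in L^2\cap L^3(\dom)$, but $V(y_0)\ge c\int_0^{1/2}\frac{dr}{r\log(1/r)}=\infty$, and $V$ is unbounded near $y_0$ by lower semicontinuity. So your key nonlinear estimate is unjustified as written. The same false bound underlies your remark that the Gronwall coefficient can involve only $\|u\|_{L^3}^2$.

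The conclusion you draw is nevertheless true, via a different H\"older split. Write $\|Vu\|_{L^2}\le\|V\|_{L^6}\|u\|_{L^3}$, and use non-endpoint HLS plus interpolation: $\|V\|_{L^6(\dom)}\le\|\even V\|_{L^6(\R^3)}\lesssim\|u\|_{L^{12/5}(\dom)}^2\le\|u\|_{L^2}\|u\|_{L^3}$. Hence $\|Vu\|_{L^2}\lesssim\|u\|_{L^2}\|u\|_{L^3}^2$, which is exactly the integrand in your $L^1_TL^2_x$ bound. Differences work the same way, using H\"older in time with exponents $8,8,4$.

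Once repaired, your argument is a valid variant of the paper's. The paper instead puts $V$ in $L^{9/2}$ and $Vu$ in the dual norm $L^{12/11}_tL^{9/5}_x$ of the admissible pair $(12,\frac94)$. Your version needs only the energy-dual norm $L^1_tL^2_x$, the homogeneous $(4,3)$ estimate and Minkowski, so it is slightly more elementary. If you do want $V\in L^\infty$, use $\|V\|_{L^\infty}\lesssim\|u\|_{L^{3,2}}^2\lesssim\|u\|_{L^2}\|u\|_{L^6}$ and add $L^2_tL^6_x$ to $X_T$.

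Finally, your ``main obstacle'' is not needed for the regularity claim, because the statement lets $T$ shrink for $D(H_q)$ data. The paper simply reruns the contraction in $C([0,T];D(H_q))$ with $T$ depending on $\|u_0\|_{D(H_q)}$, and identifies the result with the $L^2$ solution by uniqueness. Your Gronwall idea is what the paper uses later in Proposition \ref{prop:GWP}, with coefficient $\|u\|_{L^2}\|u\|_{L^6}+\|u\|_{L^2}^{1/2}\|u\|_{L^6}^{3/2}\in L^1_{\mathrm{loc}}$ rather than $\|u\|_{L^3}^2$. It is, however, genuinely useful for your approximation proof of mass conservation. That proof needs the $D(H_q)$ approximants to live on the whole $L^2$ interval $[0,T]$, and otherwise their lifespans could shrink.
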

\begin{proof}
Set, for simplicity,
\[
X_T = L^\infty([0,T];L^2(\R^3_+)) \cap L^4([0,T];L^3(\R^3_+)),
\]
By \eqref{eq:even-potential}, \eqref{eq:endpoint-Coulomb-Lorentz}, Hardy-Littlewood-Sobolev inequality and interpolation, we bound
\[
\|V(t)\|_{L^\frac{9}{2}(\dom)}\leq \big\|\even V(t)\|_{L^\frac{9}{2}(\R^3)}\lesssim \|\even u(t)\|_{L^\frac{9}{4}(\R^3)}^2\lesssim \|u(t)\|_{L^2(\dom)}^\frac{4}{3}\|u(t)\|_{L^3(\dom)}^\frac{2}{3},
\]
and hence
\[
\|V(t)u(t)\|_{L^\frac{12}{11}_t L^\frac{9}{5}_x([0,T]\times \dom)}\lesssim \|V(t)\|_{L^\frac{3}{2}_t L^\frac{9}{2}_x([0,T]\times \dom)}\|u(t)\|_{L^4_t L^3_x([0,T]\times \dom)}\lesssim T^{\frac{1}{2}}\|u(t)\|_{X_T}^3.
\]
Using Proposition \ref{prop:strichartz} and that $(12,\frac{9}{4})$ is Schr\"odinger-admissible, we have
\begin{align*}
    \left\|U_q(t)u_0+i\lambda\int_0^t U_q(t-s) \left(V(s)u(s)\right) ds\right\|_{X_T}\lesssim_\lambda \|u_0\|_{L^2}+T^\frac{1}{2} \|u\|_{X_T}^3.
\end{align*}
For two functions $u,v\in X_T$, we write $V[u]$ for the potential generated by $u$ and similarly for $V[v]$. Triangle inequality gives
\[
\big\|V[u](t)u(t)-V[v](t)v(t)\big\|_{L^\frac{12}{11}_t L^\frac{9}{5}_x([0,T]\times \dom)}\lesssim T^\frac{1}{2}\|u-v\|_{X_T} \left(\|u\|_{X_T}^2+\|v\|_{X_T}^2\right).
\]
It then follows that the Duhamel map \eqref{eq:duhamel_dom} is a contraction on a sufficiently small ball in $X_T$, provided that $T$ is chosen depending on $\|u_0\|_{L^2}$. This proves existence and uniqueness in $X_T$. The continuity in time with values in $L^2$ and the mass conservation \eqref{eq:mass} follows from the same estimates and approximation by smooth functions. 

\medskip

We now prove the persistence of $D(H_q)$ through the nonlinear flow. For any $x\in \dom$, splitting the Coulomb kernel in \eqref{eq:even-potential} into its near and far parts gives 
\[
|V(x,t)|\lesssim \int_{|x-y|>1} \frac{|\even u(y,t)|^2}{|x-y|}dy+\int_{|x-y|\leq 1}\frac{|\even u(y,t)|^2}{|x-y|}dy \lesssim \|u(t)\|_{L^2(\dom)}^2 + \|u(t)\|_{L^4(\dom)}^2,
\]
since $|x|^{-1}\in L^2_{\mathrm{loc}}(\R^3)$ in three dimensions. \eqref{eq:even-potential} and the Hardy–Littlewood–Sobolev inequality also yield
\[
\big\|\nabla V(t)\big\|_{L^3(\dom)}\lesssim \big\|\nabla (-\Delta)^{-1}\left(|\even u(t)|^2\right)\big\|_{L^3(\R^3)}\lesssim \|u(t)\|_{L^3(\dom)}^2.
\]
On the other hand, \eqref{eq:even-potential} and the boundedness of Riesz transforms imply
\[
\big\| \nabla^2 V(t)\big\|_{L^3(\dom)}\lesssim \big\|\nabla^2 (-\Delta)^{-1} \left(|\even u(t)|^2\right)\big\|_{L^3(\R^3)}\lesssim \|u(t)\|_{L^6(\dom)}^2.
\]
Leibniz's rule and Sobolev embedding therefore give
\begin{align*}
    \big\|V(t)u(t)\big\|_{H^2(\dom)}&\lesssim \|V(t)\|_{L^\infty(\dom)}\|u(t)\|_{H^2(\dom)}+\|\nabla V(t)\|_{L^3(\dom)}\|\nabla u(t)\|_{L^6}\\
    &\quad \quad +\|\nabla^2 V(t)\|_{L^3}\|u(t)\|_{L^6}\,\,\lesssim \,\,\|u(t)\|_{H^1(\dom)}^2\|u(t)\|_{H^2(\dom)}. 
\end{align*}
The same argument applied to differences gives
\[
\big\|V[u](t)u(t)-V[v](t)v(t)\big\|_{H^2(\dom)}\lesssim \|u(t)-v(t)\|_{H^2(\dom)}\left(\|u(t)\|_{H^2(\dom)}^2+\|v(t)\|_{H^2(\dom)}^2\right).
\]
The energy conservation \eqref{eq:energy} follows by direct computation.  We now verify the boundary condition. We first notice that the full-space extension of $V$ is even in the normal direction from \eqref{eq:even-potential}. Its normal derivative is therefore odd, and the regularity above ensures that its trace is well-defined. Hence, $\tr (\partial_n V(t))=0$. For $u\in D(H_q)$, it follows
\[
\partial_n\left(Vu\right)+qVu=\left(\partial_n V\right) u+V\left(\partial_n u+qu\right)=0, \quad \text{on }\partial\dom.
\]
Hence, we have $Vu\in D(H_q)$. 

Using the equivalence \eqref{eq:Hq_equivalence}, the preceding estimates imply 
\[
\|Vu\|_{D(H_q)} \lesssim_q \|u\|_{D(H_q)}^3
\]
and
\[
\|V[u]u-V[v]v\|_{D(H_q)} \lesssim_q \big(\|u\|_{D(H_q)}^2+\|v\|_{D(H_q)}^2\big)
\|u-v\|_{D(H_q)}.
\]
Since $U_q(t)$ is an isometry in the graph norm, the same Duhamel map \eqref{eq:duhamel_dom} is a contraction on a sufficiently small ball in $C([0,T];D(H_q))$, after decreasing $T$ if necessary. This produces a solution in $C([0,T];D(H_q))$. By uniqueness in $L^2$, it agrees with the mild solution constructed above. Finally, we get $u\in C^1([0,T];L^2)$ from the PDE \eqref{Hartree}. This completes the proof. 
\end{proof}

We conclude with the short-time estimates needed in the bootstrap in next section. 
\begin{corollary}\label{cor:short_time_est}
    There exists $\varepsilon_0=\varepsilon_0(q,\lambda)$ such that, if $\|u_0\|_{L^2}\leq \varepsilon_0$, then the corresponding solution exists on $[0,1]$ and satisfies
    \begin{equation}\label{eq:short_time_est}
        \|u\|_{L^\infty_t L^2_x([0,1]\times \dom)}+\|u\|_{L^4_t L^3_x([0,1]\times \dom)}+\|u\|_{L^2_t L^6_x([0,1]\times \dom)}\lesssim \|u_0\|_{L^2}. 
    \end{equation}
\end{corollary}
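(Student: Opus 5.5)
The plan is to rerun the contraction argument from Proposition \ref{prop:LWP} on the fixed interval $[0,1]$. The point is that the required smallness now comes from the data norm rather than from the length of the interval. The endpoint Strichartz pair $(2,6)$ is available by Proposition \ref{prop:strichartz}, so I will work in the enlarged space
\[
Y:=L^\infty([0,1];L^2(\dom))\cap L^4([0,1];L^3(\dom))\cap L^2([0,1];L^6(\dom)).
\]
Both $(4,3)$ and $(2,6)$ are Schr\"odinger-admissible. Estimates \eqref{eq:strichartz_homo} and \eqref{eq:strichartz_retarded} then give
\[
\|U_q(t)u_0\|_{Y}\le C\|u_0\|_{L^2},\qquad \Big\|\int_0^tU_q(t-s)(Vu)(s)\,ds\Big\|_{Y}\le C\|Vu\|_{L^{12/11}_tL^{9/5}_x([0,1]\times\dom)} .
\]
The constants here are independent of the interval.

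Next I reuse the nonlinear bound from the proof of Proposition \ref{prop:LWP}, specialized to $T=1$:
\[
\|Vu\|_{L^{12/11}_tL^{9/5}_x}\lesssim \|V\|_{L^{3/2}_tL^{9/2}_x}\|u\|_{L^4_tL^3_x}\lesssim \|u\|_{Y}^3 .
\]
It comes from \eqref{eq:even-potential}, Hardy--Littlewood--Sobolev and the interpolation $\|u\|_{L^{9/4}}\lesssim\|u\|_{L^2}^{2/3}\|u\|_{L^3}^{1/3}$, with the harmless factor $T^{1/2}=1$. The same computation gives the difference estimate
\[
\|V[u]u-V[v]v\|\lesssim \|u-v\|_Y\big(\|u\|_Y^2+\|v\|_Y^2\big).
\]
Consequently the Duhamel map \eqref{eq:duhamel_dom} sends the ball $\{\|u\|_Y\le 2C\|u_0\|_{L^2}\}$ into itself and is a contraction there, provided $C^3|\lambda|\|u_0\|_{L^2}^2\ll1$. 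This fixes $\varepsilon_0(q,\lambda)$; the dependence on $q$ enters only through the extension constant in \eqref{eq:Robin-extension-Lp}, hence through the dispersive and Strichartz constants.

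The fixed point solves the equation in the Duhamel sense on $[0,1]$. Its restriction to any $[0,T]$ lies in $X_T$, so by the uniqueness part of Proposition \ref{prop:LWP} it coincides with the local solution there. Therefore that solution extends to all of $[0,1]$ and satisfies $\|u\|_Y\le 2C\|u_0\|_{L^2}$, which is exactly \eqref{eq:short_time_est}.

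The only step that needs any care is making sure the $L^2_tL^6_x$ component is controlled without losing smallness. This is where the endpoint Keel--Tao estimate, already recorded in Proposition \ref{prop:strichartz}, is needed; the nonlinearity itself is estimated only through the $L^\infty L^2\cap L^4L^3$ components. Beyond this the argument is routine.
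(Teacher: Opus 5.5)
Your proposal is correct and follows the paper's proof. The paper closes the contraction argument of Proposition \ref{prop:LWP} on $[0,1]$ using smallness of $\|u_0\|_{L^2}$ in place of smallness of $T$, obtains the bounds from Proposition \ref{prop:strichartz}, and differs from you only in that it closes in $L^\infty_tL^2_x\cap L^4_tL^3_x$ and then recovers the $L^2_tL^6_x$ bound by applying Strichartz once more to the Duhamel formula, rather than building it into the fixed-point space.
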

\begin{proof}
    For sufficiently small $\varepsilon_0$, the contraction argument in the proof of Proposition \ref{prop:LWP} closes with $T=1$. Applying the Strichartz estimates in Proposition \ref{prop:strichartz} gives the stated bounds.
\end{proof}


\section{The Robin-adapted vector field and global well-posedness}\label{sec:GWP}
The normal Galilean vector field $G_1u$ does not preserve the Robin boundary condition, and we replace it by the modification $Z_1u$. The extension constructed in Section \ref{sec:lin} allows us to recover the estimates associated to $Gu$ after the interaction of $Z$ with normal multiplication is understood precisely.


\subsection{The Robin-adapted vector field}\label{subsec:Z}
The expression for the Galilean vector field \eqref{eq:Galilean} implies that the tangential components $G_2$ and $G_3$ preserve the Robin condition. The normal component behaves differently. Indeed, using $\partial_1u=qu$ on the boundary, we obtain 
\[
(-\partial_1+q)G_1u=-2t\partial_1^2u -iu+2tq \partial_1u=2t(q^2-\partial_1^2)u -iu, \quad \text{on }\partial\dom,
\]
which does not vanish in general. Thus $G_1u$ does not generally belong to $D(H_q)$, and a direct boundary energy estimate for $G_1u$ is unavailable.

The usual Galilean vector field can also be expressed in terms of the free Schr\"odinger group:
\begin{equation}\label{eq:G_U0}
    G(t)=U_0(t)(ix)U_0(-t);
\end{equation}
in other words, $G(t)$ can be obtained by transporting the multiplication operator $ix$ under the free Schr\"odinger flow, and it is natural to consider
\begin{equation}\label{eq:Z}
    Z(t):=U_q(t)(ix)U_q(-t).
\end{equation}
By construction, $Z(t)$ is transported by the Robin flow: $Z(t)U_q(s)=U_q(s)Z(t-s)$. Since the boundary and $H_q$ are invariant under translations in the tangential variables, 
\[
Z_j(t)=G_j(t), \qquad \text{for }j=2,3,
\]
only the normal component requires further analysis. We begin by recording the failure of the Robin extension to commute with normal multiplication. 
\begin{lemma}\label{lem:Cq}
    For a function $f$ on $\dom$, we have 
    \begin{equation}\label{eq:commute_x1}
        \robin(x_1f)=|x_1| \robin f+C_q f,
    \end{equation}
    where 
    \begin{equation}\label{eq:Cq}
        C_q f(x_1,x')=
        \begin{cases}
            0, &x_1\geq 0, \\[2mm]
            -2q\displaystyle\int _0^{-x_1} (x_1+y_1)e^{q(x_1+y_1)}f(y_1,x') dy_1, &x_1<0.
        \end{cases}
    \end{equation}
    Moreover, for $1\leq p\leq \infty$, we have
    \begin{equation}\label{eq:bdd_Cq}
        \|C_q f\|_{L^p(\R^3)}\lesssim q^{-1}\|f\|_{L^p(\dom)}.
    \end{equation}
\end{lemma}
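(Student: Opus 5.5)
The identity \eqref{eq:commute_x1} is a direct pointwise computation from the definition \eqref{eq:Robin-extension}, so I will simply evaluate both sides on each half-space. For $x_1\ge 0$ we have $\robin(x_1f)=x_1f=|x_1|\robin f$, consistent with $C_qf=0$ there. For $x_1=-a<0$, I write $\robin(x_1f)(-a,x')$ using \eqref{eq:Robin-extension-a} with $f$ replaced by $y_1f(y_1,x')$:
\[
\robin(x_1f)(-a,x')=a f(a,x')-2q\int_0^a e^{-q(a-y_1)}y_1f(y_1,x')\,dy_1 .
\]
On the other hand, $|x_1|\robin f(-a,x')=af(a,x')-2q\int_0^a a\,e^{-q(a-y_1)}f(y_1,x')\,dy_1$. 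Subtracting, the boundary terms $af(a,x')$ cancel. The integral terms combine with the factor $(y_1-a)=(x_1+y_1)$, and since $e^{-q(a-y_1)}=e^{q(x_1+y_1)}$, this gives exactly \eqref{eq:Cq}. The only thing to be careful about here is the sign bookkeeping between $a$ and $x_1$.

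For the bound \eqref{eq:bdd_Cq}, I will follow the proof of Lemma \ref{lem:robin_extension}. In the variable $a=-x_1>0$, the map is
\[
C_qf(-a,x')=2q\int_0^a (a-y_1)e^{-q(a-y_1)}f(y_1,x')\,dy_1,
\]
which is a Volterra convolution in the normal variable with kernel $m_q(r)=2q\,r e^{-qr}\mathfrak 1_{\{r>0\}}$. Its $L^1$ norm is $2q\int_0^\infty re^{-qr}\,dr=2q\cdot q^{-2}=2q^{-1}$. Applying the Volterra Young inequality \eqref{eq:Volterra_Young} in $x_1$ for each fixed $x'$, and then taking the $L^p$ norm in $x'$ (or the supremum when $p=\infty$), gives $\|C_qf\|_{L^p(\R^3)}\le 2q^{-1}\|f\|_{L^p(\dom)}$.

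There is no genuine obstacle in this argument. The one point worth noting is that the kernel $re^{-qr}$ is integrable only because $q>0$, which is what yields the explicit $q^{-1}$ dependence.
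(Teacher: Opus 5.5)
Your proposal is correct and follows essentially the same route as the paper: you verify the identity by direct computation on each side of the boundary, then bound $C_q$ as a Volterra convolution in the normal variable with kernel $2q\,r e^{-qr}\mathfrak{1}_{\{r>0\}}$ of $L^1$-norm $2/q$ via Young's inequality. Your sign bookkeeping between $a=-x_1$ and $x_1+y_1$ is right. The only slip is verbal: the cancelling terms $af(a,x')$ are the reflected terms, not boundary terms.
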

\begin{proof}
    The identity \eqref{eq:commute_x1} follows by direct computation. The integral defining $C_q$, after rewriting in $a=-x_1$, is a Volterra convolution with the kernel $2qre^{-qr}\mathfrak{1}_{\{r>0\}}$, whose $L^1$-norm is $\frac{2}{q}$. The asserted $L^p$ boundedness in \eqref{eq:bdd_Cq} then follows from Young’s inequality.
\end{proof}
This defect $C_q$ leads us to the direction of comparison needed later:
\begin{lemma}\label{lem:comparison_GZ}
    For every $t\in\R$ and every $u$ for which the right-hand side is finite, we have
    \begin{equation}\label{eq:comparison_GZ}
        \big\|G(t)u\|_{L^2(\dom)}\lesssim _q \big\|Z(t)u\big\|_{L^2(\dom)}+\|u\|_{L^2(\dom)}. 
    \end{equation}
\end{lemma}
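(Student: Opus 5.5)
Only the normal component needs work, since $Z_j(t)=G_j(t)$ for $j=2,3$. The plan is to transport $Z_1(t)u$ to the whole space through the Robin extension. There we compare it with the free Galilean field $G_1(t)=U_0(t)(ix_1)U_0(-t)$ applied to $\robin u$, and then restrict back to $\dom$.

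\textbf{Step 1: pass to the whole space.} Set $f=U_q(-t)u$. By \eqref{eq:flow_intertwining} we have
\[
\robin u=\robin U_q(t)f=U_0(t)\robin f.
\]
Applying $\robin$ to $Z_1(t)u=U_q(t)(ix_1f)$ and using \eqref{eq:flow_intertwining} and then \eqref{eq:commute_x1} gives
\[
\robin Z_1(t)u=U_0(t)\robin(ix_1f)=U_0(t)\big(i|x_1|\robin f\big)+iU_0(t)C_qf.
\]
Restricting to $\dom$ and using $\res\robin=\mathrm{Id}$, the last term is bounded in $L^2$ by $q^{-1}\|f\|_{L^2}=q^{-1}\|u\|_{L^2}$. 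This uses \eqref{eq:bdd_Cq} and unitarity.

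\textbf{Step 2: recover $G_1u$.} It remains to compare $\res U_0(t)(i|x_1|F)$ with $G_1(t)u=\res U_0(t)(ix_1F)$, where $F:=\robin f$. The second identity holds because $G_1$ is a local differential operator and $U_0(t)F=\robin u$ agrees with $u$ on $\dom$. Using $|x_1|=x_1\sgn(x_1)$, the difference is
\[
2i\,U_0(t)\big(x_1\mathfrak 1_{\{x_1<0\}}F\big).
\]
This is not a priori small. I would therefore avoid estimating it directly and instead use the conjugate representation. On the whole space, $\|U_0(t)(ix_1F)\|_{L^2(\R^3)}=\|x_1F\|_{L^2(\R^3)}$, and the Robin analogue is $\|Z_1u\|_{L^2(\dom)}=\|x_1f\|_{L^2(\dom)}$. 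By \eqref{eq:commute_x1} and \eqref{eq:bdd_Cq},
\[
\|x_1F\|_{L^2(\R^3)}\le\big\||x_1|\robin f\big\|_{L^2}\le\|\robin(x_1f)\|_{L^2}+\|C_qf\|_{L^2}\lesssim_q\|x_1f\|_{L^2(\dom)}+\|f\|_{L^2(\dom)}.
\]
The first step also uses \eqref{eq:Robin-extension-Lp}.

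\textbf{Step 3: conclude.} Combining the steps,
\[
\|G_1(t)u\|_{L^2(\dom)}\le\|U_0(t)(x_1F)\|_{L^2(\R^3)}=\|x_1F\|_{L^2(\R^3)}\lesssim_q\|Z_1(t)u\|_{L^2(\dom)}+\|u\|_{L^2(\dom)}.
\]
Here we used $\|x_1f\|_{L^2}=\|Z_1u\|_{L^2}$ and $\|f\|_{L^2}=\|u\|_{L^2}$. The tangential components are equal, which finishes \eqref{eq:comparison_GZ}.

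The main obstacle is justifying the identity $G_1(t)u=\res U_0(t)(ix_1\robin f)$, that is, that $G_1$ commutes with restriction for $\robin u$ merely in $L^2$ with finite $Z_1u$. I would prove everything first for $f\in C_c^\infty(\overline{\dom})$ with $x_1f\in L^2$, where $G(t)=U_0(t)(ix)U_0(-t)$ holds classically. I would then pass to the limit in the distributional sense, using that $G_1=2t\partial_1+ix_1$ is a local operator, so its action on $\dom$ only sees $\res\robin u=u$.
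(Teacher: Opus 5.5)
Your proposal is correct and is essentially the paper's own proof. The paper also writes $G_1(t)u=\res U_0(t)(ix_1\robin f)$ using the locality of $G_1$, and then concludes from the unitarity of $U_0(t)$, the pointwise identity $|x_1\robin f|=\bigl||x_1|\robin f\bigr|$, \eqref{eq:commute_x1}, \eqref{eq:Robin-extension-Lp} and \eqref{eq:bdd_Cq}. Your Step 1 (the splitting of $\robin Z_1(t)u$) is not needed here; it is the decomposition the paper uses later in Lemma \ref{lem:commutator}. Also, the first inequality in your Step 2 chain is actually an equality, and it is the third step, not the first, that uses \eqref{eq:Robin-extension-Lp}.
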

\begin{proof}
    It suffices to consider the normal component. Set $f=U_q(-t)u$ to be the profile under the Robin flow. Since $G_1(t)=2t\partial_1+ix_1$ is a local first-order differential operator with smooth coefficients, so its action on the restriction of a sufficiently regular full-space function depends only on the function in the interior of $\dom$. Thus
    \[
    G_1(t)\res =\res G_1(t).
    \]
    Combining this with \eqref{eq:Robin_free_representation} and \eqref{eq:G_U0}, we have
    \[
    G_1(t)u=G_1(t)\left(\res U_0(t)\robin f \right)=\res G_1(t)U_0(t)\robin f=\res U_0(t)(ix_1 \robin f).
    \]
    Consequently, the unitarity of the free flow on the full space and the unitarity of $U_q(t)$ on the half space, together with Lemma \ref{lem:robin_extension} and Lemma \ref{lem:Cq} yield
    \begin{align*}
        \big\|G_1(t)u\big\|_{L^2(\dom)}&\leq \big\|U_0(t)(ix_1 \robin f)\big\|_{L^2(\R^3)}\leq \big\|\robin(x_1f)\big\|_{L^2(\R^3)}+\|C_qf\|_{L^2(\R^3)}\\
        &\lesssim_q \big\|ix_1 U_q(-t)u\big\|_{L^2(\dom)}+\big\|U_q(-t)u\big\|_{L^2(\dom)} \,=\, \big\|Z_1(t)u\big\|_{L^2(\dom)}+\|u\|_{L^2(\dom)}.
    \end{align*}
\end{proof}
It is important that this argument yields only a norm comparison, not an operator identity between $Z_1$ and $G_1$. Since the restriction is applied after the free evolution, which is nonlocal in the normal variable, one cannot replace $x_1$ by $|x_1|$ inside the free propagator; their difference on the reflected half-space has the same weighted order as the leading Galilean term and is not an $L^2$-bounded perturbation. The Robin extension identity overcomes this difficulty by isolating a bounded reminder $C_q$, thereby connecting $Z$ to $G$, while preserving the boundary structure needed in the commutator estimate to close the bootstrap. 


\subsection{Bootstrap closure and slow growth of vector field}  We use a fixed absolute exponent $\delta\leq \frac{1}{16}$ to close the bootstrap. Throughout the section, we assume the initial data satisfies \eqref{eq:small_initial}. For $0\leq t\leq T$, and given a solution $u$ of \eqref{Hartree}, we consider the following bootstrap assumption
\begin{equation}\label{eq:boot}
    t\,I(t)^\frac{1}{2}+\la t\ra ^{-2\delta}\big\|Zu(t)\big\|_{L^2(\dom)}^2\leq \varepsilon^2,
\end{equation}
where $I(t)$ is given in \eqref{eq:I} and $Z$ in \eqref{eq:Z}, and we improve both terms on the left-hand side under the smallness condition for initial data \eqref{eq:small_initial} and $\varepsilon_0\ll \varepsilon\ll 1$.

\medskip

We first record two estimates used repeatedly below. For $t>0$, \eqref{eq:Galilean} and Sobolev embedding imply
\begin{equation}\label{eq:L6_trick}
    \|u(t)\|_{L^6(\dom)}\lesssim \big\|\nabla\left(e^{i\sigma} u(t)\right)\big\|_{L^2(\dom)}=\frac{1}{2t}\|Gu(t)\|_{L^2(\dom)}.
\end{equation}
\eqref{eq:even-potential} and Sobolev embedding yield
\begin{align*}
    \| \nabla \even V(t)\|_{L^3(\R^3)}\lesssim \big\||\nabla|^{-\frac{1}{2}} \left(|\even u(t)|^2\right)\big\|_{L^2(\R^3)}\simeq \left(\iint_{\R^3\times \R^3}\frac{|\even u(x,t)|^2|\even u(y,t)|^2 }{|x-y|^2} \,dxdy\right)^\frac{1}{2}.
\end{align*}
Splitting this integral according to the signs $x_1$ and $y_1$, and observing that
\begin{equation}\label{eq:reflected_geometry}
    |x^*-y|\geq |x-y|, \qquad x,y\in \dom, 
\end{equation}
shows that this double integral lies between $2I(t)$ and $4I(t)$. Hence
\begin{equation}\label{eq:nabla_Ve}
    \| \nabla \even V(t)\|_{L^3(\R^3)}\lesssim I(t)^\frac{1}{2}. 
\end{equation}

We first estimate the commutator of $Z$ with the Hartree potential. 
\begin{lemma}\label{lem:commutator}
    For $t>0$, we have
    \begin{equation}\label{eq:commutator}
        \big\| \left[Z(t),V(t)\right] u(t)\big\|_{L^2(\dom)}\lesssim _q \left(I(t)^\frac{1}{2}+t^{-1}\|u(t)\|_{L^2(\dom)}^2\right)\|Gu(t)\|_{L^2(\dom)}.
    \end{equation}
\end{lemma}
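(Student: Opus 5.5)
The plan is to treat the tangential and normal components separately. For $j=2,3$ we have $Z_j=G_j=2t\partial_j+ix_j$, so $[Z_j,V]u=2t(\partial_jV)u$. Hölder's inequality, \eqref{eq:nabla_Ve} and \eqref{eq:L6_trick} then give
\[
\|2t(\partial_jV)u\|_{L^2}\lesssim t\|\nabla\even V\|_{L^3}\|u\|_{L^6}\lesssim I(t)^{\frac12}\|Gu\|_{L^2}.
\]
The same estimate controls every term of the form $2t(\partial_1V)u$ that arises below. The other recurring error terms are $L^2$-bounded operators, such as $\res U_0(t)\,C_q\,U_q(-t)$, applied to $Vu$, or followed by multiplication by $V$. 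For these we use the bounds
\[
\|Vu\|_{L^2}\lesssim\|\even V\|_{L^{3,\infty}}\|u\|_{L^{6,2}}\lesssim\|u\|_{L^2}^2\,t^{-1}\|Gu\|_{L^2},
\]
which follows from \eqref{eq:endpoint-Coulomb-Lorentz}, \eqref{eq:Sobolev-Lorentz-half-space} and \eqref{eq:Galilean}, and
\[
\|V\|_{L^\infty}\lesssim\|u\|_{L^{3,2}}^2\lesssim\|u\|_{L^2}\|u\|_{L^6},
\]
which follows from \eqref{eq:L32-interpolation}. Together these produce exactly the second term $t^{-1}\|u\|_{L^2}^2\|Gu\|_{L^2}$ in \eqref{eq:commutator}.

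For the normal component, set $f=U_q(-t)w$. Using \eqref{eq:Robin_free_representation} and Lemma~\ref{lem:Cq}, we write
\[
Z_1w=\res U_0(t)\robin(ix_1f)=\res U_0(t)\big(ix_1\robin f\big)+\res U_0(t)\,iNf,
\qquad Nf:=\robin(x_1f)-x_1\robin f.
\]
By the argument of Lemma~\ref{lem:comparison_GZ}, the first term is $G_1w$, whose commutator with $V$ is $2t(\partial_1V)u$. The function $Nf$ is supported in $\{x_1<0\}$. An explicit computation in $a=-x_1$ gives
\[
Nf(-a)=2af(a)-2q\int_0^a e^{-q(a-y)}(a+y)f(y)\,dy.
\]
Hence $N=2\,\mathbf 1_{\{x_1<0\}}\robin x_1+\widetilde C_q$, where $\widetilde C_q$ is a Volterra operator with kernel $2q(y-a)e^{-q(a-y)}$. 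This kernel is of the same type as in Lemma~\ref{lem:Cq}, so $\widetilde C_q$ is $L^2$-bounded with norm $\lesssim q^{-1}$, and its contribution falls under the bounded-operator estimates above. Writing $\mathbf 1_{\{x_1<0\}}\robin g=\robin g-\mathbf 1_{\{x_1>0\}}g$ with $g=x_1f$, and using \eqref{eq:flow_intertwining}, the leading part becomes
\[
2\big(Z_1w-\res U_0(t)\,\mathbf 1_{\{x_1>0\}}\,ix_1U_q(-t)w\big).
\]
This yields the identity
\[
Z_1=-G_1+2\,\res U_0(t)\,\mathbf 1_{\{x_1>0\}}\,ix_1\,U_q(-t)+\text{(bounded)}.
\]

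The remaining, and main, obstacle is the commutator of $\res U_0(t)\mathbf 1_{\{x_1>0\}}ix_1U_q(-t)$ with $V$. This term is nonlocal and of the same weighted order as $G_1$, so it cannot be crudely bounded. The step to try first is an approximate intertwining of $V$ with the extension,
\[
\robin(Vu)=(\even V)\,\robin u+D,
\qquad
D(-a)=-2q\int_0^a e^{-q(a-y)}\big(V(y)-V(a)\big)u(y)\,dy.
\]
Writing $V(y)-V(a)=\int_a^y\partial_1V$ shows that $D$ carries an extra factor $(a-y)$ against $\partial_1V$. One can then combine this factor with the weight $x_1$ to reconstitute a $C_q$-type kernel times $\partial_1\even V$. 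After passing $V$ through $U_0(t)$ using $U_0(-t)\,\even V\,U_0(t)$ and the local identity $[G_1,\even V]=2t\partial_1\even V$ on $\R^3$, everything should reduce to terms of the form $t\,\|\nabla\even V\|_{L^3}\|u\|_{L^6}$ or to bounded operators applied to $Vu$. The delicate points are the cutoff $\mathbf 1_{\{x_1>0\}}$ and the growth of $x_1$ inside $D$. I expect the hidden cancellation mentioned in the introduction, namely that the weight $(a+y)$ in $N$ matches the difference quotient in $D$, to be exactly what makes this step close.
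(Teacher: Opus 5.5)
Everything you actually prove is correct: the tangential estimate, the bounds for $L^2$-bounded remainders via $\|Vu\|_{L^2}$ and $\|V\|_{L^\infty}$, the formula for $N$, the identity $Z_1=-G_1+2\res U_0(t)\mathbf 1_{\{x_1>0\}}ix_1U_q(-t)+(\text{bounded})$, and your $D$, which coincides with the paper's $\mathfrak{E}u$. But there is a genuine gap. The step you call the main obstacle is left as a heuristic, and it is the entire content of the normal-direction estimate.

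Your reduction does not make that step smaller. With $f=U_q(-t)w$ one has $2\mathbf 1_{\{x_1>0\}}x_1f=(x_1+|x_1|)\robin f$ and $U_0(t)\robin f=\robin w$. Hence, exactly,
\[
2\,\res U_0(t)\mathbf 1_{\{x_1>0\}}ix_1U_q(-t)w=G_1w+\res J(t)\robin w,\qquad J(t):=U_0(t)\,i|x_1|\,U_0(-t)=2it\,e^{-i\sigma_1}|\partial_1|e^{i\sigma_1},
\]
with $\sigma_1=x_1^2/(4t)$. So your identity is just $Z_1=\res J(t)\robin+(\text{bounded})$. Modulo the harmless $[G_1,V]$, your remaining commutator is $[\res J\robin,V]$, which is the original problem again.

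Your two suggestions do not close it. Conjugating $\even V$ by $U_0(t)$ and using $[G_1,\even V]=2t\partial_1\even V$ only handles the $x_1$ half of $\mathbf 1_{\{x_1>0\}}x_1=\frac12(x_1+|x_1|)$. The $|x_1|$ half is the nonlocal commutator with $|\partial_1|$, and that is exactly where the cutoff difficulty you flagged sits. Merging the factor $(a-y)$ of $D$ with the weight $x_1$ into a $C_q$-type kernel also cannot work as stated: $x_1$ acts in profile variables (after $U_q(-t)$), while $D$ lives in physical variables at time $t$.

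Two ingredients are missing. Split $[\res J\robin,V]u=\res[J,\even V]\robin u+\res J D$.

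\emph{First term.} You need a commutator estimate for $|\partial_1|$, such as $\|[|\partial_1|,W]v\|_{L^2_{x_1}}\lesssim\|\partial_1W\|_{L^3_{x_1}}\|v\|_{L^6_{x_1}}$ (Kato--Ponce/Li type), followed by H\"older in $x'$. This gives $t\|\partial_1\even V\|_{L^3}\|\robin u\|_{L^6}\lesssim I(t)^{1/2}\|Gu\|_{L^2}$.

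\emph{Second term.} Use that $D$ is supported in $\{x_1<0\}$ while you restrict to $\{x_1>0\}$, so only the off-diagonal kernel of $|\partial_1|$ is seen:
\[
|\res JD(x_1,x')|\lesssim t\int_0^\infty|D(-a,x')|(x_1+a)^{-2}\,da.
\]
Schur's test with weights $x_1^{-1/2}$ and $a^{-1/2}$ then gives $\|\res JD\|_{L^2}\lesssim t\|D(-a,x')/a\|_{L^2}$. Only at this point does the cancellation enter. The mean-value factor $(a-y)\le a$ absorbs this Hardy weight $1/a$; it is not matched against the weight $(a+y)$ of $N$, as you guessed. Minkowski and H\"older then give $t\|u\|_{L^6}\|\partial_1V\|_{L^3}\lesssim I(t)^{1/2}\|Gu\|_{L^2}$.
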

\begin{proof}
    \emph{Tangential directions.} For $j=2,3$, we have $Z_j(t)=G_j(t)$, and, using \eqref{eq:L6_trick}  and \eqref{eq:nabla_Ve},
    \begin{align*}
        \big\| \left[Z_j(t),V(t)\right] u(t)\big\|_{L^2(\dom)}&=\big\|2t\partial_j V(t)u(t)\big\|_{L^2(\dom)}\lesssim t \big\|\even (\partial_j V(t))\big\|_{L^3(\R^3)}\|u(t)\|_{L^6(\dom)}\\
        &
        \lesssim I(t)^\frac{1}{2} \|Gu(t)\|_{L^2(\dom)}.
    \end{align*}

    \emph{Normal direction.}  We first write
    \begin{equation}\label{eq:J}
        J(t):=U_0(t)(i|x_1|)U_0(-t), \qquad f:=U_q(-t)u.
    \end{equation}
    Using \eqref{eq:Robin_free_representation}, \eqref{eq:flow_intertwining} and Lemma \ref{lem:Cq}, we obtain
    \begin{align*}
        Z_1(t)u& \,=\,\res U_0(t)\left(i|x_1| \robin f\right)+i\res U_0(t)C_q f\,=\,\res J(t)\robin u+i\res U_0(t)C_q U_q(-t)u.
    \end{align*}
    The second term is $L^2$-bounded uniformly in $t$ from the unitarity of $U_0$ and $U_q$ flows and \eqref{eq:bdd_Cq}. We estimate its commutator first:
    \begin{align*}
        \big\|\left[i\res U_0(t)C_q U_q(-t), V(t) \right]u\big\|_{L^2(\dom)}&\lesssim_q \|V(t)\|_{L^\infty}\|u(t)\|_{L^2(\dom)}\\
        &\lesssim_q t^{-1}\|u(t)\|_{L^2(\dom)}^2 \|Gu(t)\|_{L^2(\dom)}.
    \end{align*}
    Here, the last step follows from \eqref{eq:Lorentz-basic-estimates}--\eqref{eq:endpoint-Coulomb-Lorentz} and \eqref{eq:L6_trick}:
    \begin{equation}\label{eq:V_Linfty}
        \begin{split}
            \|V(t)\|_{L^\infty(\dom)}&\leq \|(-\Delta)^{-1}(|\even u(t)|^2)\|_{L^\infty(\R^3)}\lesssim \|\even u(t)\|_{L^{3,2}(\R^3)}^2\\
            &\lesssim \|u(t)\|_{L^2(\dom)} \|u(t)\|_{L^{6}(\dom)}\lesssim t^{-1}\|u\|_{L^2(\dom)}\|Gu\|_{L^2(\dom)}.
        \end{split}
    \end{equation}

    \smallskip

    We now consider the leading part of $Z_1(t)$. We have
    \begin{align*}
        \left[\res J(t)\robin, V(t)\right]u=\res \left[J(t), (\even V)\right]\robin u+\res J(t)\left(\robin (Vu)-(\even V)\robin u\right).
    \end{align*}
    Since $U_0(t)$ is unitary, $U_0(t)XU_0(-t)$ is unitarily equivalent to the multiplication operator $X$. The functional calculus therefore yields the identity
    \[
    J(t)=2it e^{-i\sigma_1}|\partial_1|e^{i\sigma_1}, \qquad \sigma_1=\frac{x_1^2}{4t}.
    \]
    For each fixed $x'$, the one-dimensional homogeneous commutator estimate \cite[Corollary 5.2]{Li} (see also \cite{KP88}) gives
\[
\|[|\partial_1|,W]v\|_{L^2_{x_1}}\lesssim\|\partial_1W\|_{L^3_{x_1}} \|v\|_{L^6_{x_1}}.
\]
H\"older's inequality in $x'$ with exponents $3$ and $6$ then gives
\[
\|[|\partial_1|,W]v\|_{L^2(\R^3)}\lesssim\|\partial_1W\|_{L^3(\R^3)}\|v\|_{L^6(\R^3)}.
\]
Since multiplication operators commute, we apply this to obtain
    \begin{align*}
        \big\| \res \left[J(t), (\even V(t))\right]\robin u\big\|_{L^2(\dom)}&\leq 2t\big\|\left[|\partial_1|, (\even V(t))\right]e^{i\sigma_1}\robin u\big\|_{L^2(\R^3)}\\
        &\lesssim t\|\partial_1 \even V(t)\|_{L^3(\R^3)}\| \robin u(t)\|_{L^6(\R^3)}\lesssim I(t)^\frac{1}{2}\|Gu(t)\|_{L^2(\dom)},
    \end{align*}
    where the last inequality follows from \eqref{eq:L6_trick} and \eqref{eq:nabla_Ve}. 
    
    It remains to estimate the failure of the Robin extension to commute with multiplication by $V$. Set 
    \[
    \mathfrak{E}u:=\robin(Vu)-(\even V)\robin u.
    \]
    The function $\mathfrak{E}u$ vanishes on $x_1>0$. For $a>0$, a direct calculation gives
    \begin{align*}
        \mathfrak{E}u(-a,x')=2q \int _0^{a} e^{-q(a-y_1)}u(y_1,x')\left( V(a,x')-V(y_1,x')\right)dy_1.
    \end{align*}
    This formula illustrates an essential cancellation: the potential appears through a difference across the reflected interval. 

    Since $\mathfrak{E}u$ is supported on $x_1<0$, the kernel of $|\partial_1|$ has no singularity after restricting to $x_1>0$. Hence, we write
    \begin{align*}
        \big| \res J(t)\mathfrak{E}u(x_1,x')\big| \lesssim t\int_{-\infty}^0 \frac{|\mathfrak{E}u(y_1,x')|}{(x_1-y_1)^2}\,dy_1=t\int_0^\infty \frac{|\mathfrak{E}u(-a,x')|}{a}\times \frac{a}{(x_1+a)^2}\,da.
    \end{align*}
    Schur's test, with weights $x_1^{-\frac{1}{2}}$ and $a^{-\frac{1}{2}}$ bounds
    \[
    \big\|\res J(t)\mathfrak{E}u\big\|_{L^2(\dom)} \lesssim t\left\|\frac{\mathfrak{E}u(-a,x')}{a}\right\|_{L^2_{a,x'}(\dom)}\,.
    \]
    By the mean-value formula, we write
    \[
    V(a,x')-V(y_1,x')=(a-y_1)\int_0^1 \partial_1 V\left(y_1+\theta(a-y_1),x'\right) d\theta.
    \]
    Using change of variable $r=a-y_1$, thus $0\le \frac{r}{a}\leq 1$ for $0\le y_1\leq a$ and applying Minkowski’s inequality, we obtain
    \begin{align*}
        &\left\|\frac{\mathfrak{E}u(-a,x')}{a}\right\|_{L^2_{a,x'}(\dom)} \leq 2q \int_0^1 \left\| \int_0^a \frac{r}{a} e^{-qr} u(a-r,x')\partial_1V(a-(1-\theta)r,x') dr     \right\|_{L^2_{a,x'}(\R^3_+)} d\theta\\
        &\qquad \leq 2q \int_0^1 \int_0^\infty e^{-qr} \big\|\mathfrak{1}_{r\leq a} u(a-r,x')\big\|_{L^6_{a,x'}(\dom)}\big\|\mathfrak{1}_{r\leq a}\partial_1 V(a-(1-\theta)r,x')\big\|_{L^3_{a,x'}(\dom)}dr d\theta\\
        &\qquad \lesssim_q \|u(t)\|_{L^6(\dom)}\|\partial_1 V(t)\|_{L^3(\dom)}\lesssim t^{-1}I(t)^{\frac{1} {2}}\|Gu(t)\|_{L^2(\dom)},
    \end{align*}
    from \eqref{eq:L6_trick} and \eqref{eq:nabla_Ve}. Therefore, we have proved
    \begin{equation}
        \big\|\res J(t)\mathfrak{E}u\big\|_{L^2(\dom)}\lesssim I(t)^{\frac{1} {2}}\|Gu(t)\|_{L^2(\dom)}. 
    \end{equation}
    Combining these estimates proves \eqref{eq:commutator}. 
\end{proof}
\begin{remark}\label{rmk:small_time_commutator}
    We applied \eqref{eq:L6_trick} to get the explicit time decay in the second term in \eqref{eq:commutator}. If the $L^6$-norm appearing in the proof are instead left unchanged, the same argument yields
    \begin{equation}\label{eq:small_time_commutator}
        \big\| \left[Z(t),V(t)\right] u(t)\big\|_{L^2(\dom)}\lesssim _q I(t)^\frac{1}{2}\|Gu(t)\|_{L^2(\dom)}+\|u(t)\|_{L^2(\dom)}^2\|u(t)\|_{L^6(\dom)},
    \end{equation}
    which works better for small time propagation. 
\end{remark}

This proposition immediately gives the required energy estimate for $Zu(t)$:
\begin{corollary}\label{cor:improved_Z}
    Under the bootstrap assumption \eqref{eq:boot} for $0\leq t\leq T$ and the smallness assumption on initial data \eqref{eq:small_initial}, we have, 
    \begin{equation}
        \|Zu(t)\|_{L^2(\dom)}\leq K\varepsilon_0\la t\ra ^{K(\varepsilon_0^2+\varepsilon^2)},
    \end{equation}
    with $\varepsilon_0\ll 1$ and $K$ being some implicit constant depending on $q$ and $\lambda$.
\end{corollary}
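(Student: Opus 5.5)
The plan is an energy estimate for $w:=Zu$, driven by the conjugation structure $Z(t)=U_q(t)(ix)U_q(-t)$. For strong solutions (data in $D(H_q)$ with $\la x\ra u_0\in L^2$; the general case follows by density and the uniqueness in Proposition \ref{prop:LWP}), the profile $f=U_q(-t)u$ satisfies $i\partial_t f=-\lambda U_q(-t)(Vu)$. Hence $w=U_q(t)(ixf)$ obeys
\[
(i\partial_t-\Delta_q)w=-\lambda Z(Vu)=-\lambda V\,Zu-\lambda[Z,V]u .
\]
Since $V$ is real, the term $-\lambda VZu$ does not change $\|w\|_{L^2}$. This gives
\[
\tfrac{d}{dt}\|Zu\|_{L^2}\le |\lambda|\,\|[Z(t),V(t)]u(t)\|_{L^2}.
\]
To justify this identity I would regularize $x$ by $x\la \epsilon x\ra^{-1}$, or work directly in the integrated Duhamel form, and then let $\epsilon\to0$.

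For $t\ge1$ I would apply Lemma \ref{lem:commutator}, and then Lemma \ref{lem:comparison_GZ} to replace $\|Gu\|$ by $\|Zu\|+\|u\|$. Mass conservation gives $\|u\|_{L^2}\le\varepsilon_0$, and the bootstrap \eqref{eq:boot} gives $I(t)^{1/2}\le\varepsilon^2 t^{-1}$. Together these yield
\[
\tfrac{d}{dt}\|Zu\|\le C_{q,\lambda}\,(\varepsilon^2+\varepsilon_0^2)\,t^{-1}\big(\|Zu\|+\varepsilon_0\big).
\]
Gronwall applied to $\|Zu\|+\varepsilon_0$ on $[1,t]$ then gives
\[
\|Zu(t)\|+\varepsilon_0\le(\|Zu(1)\|+\varepsilon_0)\,t^{K(\varepsilon^2+\varepsilon_0^2)}.
\]
The term $\varepsilon_0$ contributes the additive part of the final bound, which is harmless.

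On $[0,1]$ the bound $t^{-1}$ is not integrable, so I would use Remark \ref{rmk:small_time_commutator} instead:
\[
\tfrac{d}{dt}\|Zu\|\lesssim I^{1/2}(\|Zu\|+\varepsilon_0)+\varepsilon_0^2\|u\|_{L^6}.
\]
To control $I(t)^{1/2}$ uniformly near $t=0$, I would use Hardy--Littlewood--Sobolev: $I^{1/2}\lesssim\|u\|_{L^3}^2$, after extending evenly and using \eqref{eq:reflected_geometry}. Corollary \ref{cor:short_time_est} then makes $\int_0^1 I^{1/2}\,dt\lesssim\|u\|_{L^4L^3}^2\lesssim\varepsilon_0^2$ and $\int_0^1\|u\|_{L^6}\,dt\lesssim\varepsilon_0$. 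Gronwall on $[0,1]$ gives
\[
\|Zu(1)\|\lesssim\|Zu(0)\|+\varepsilon_0^3\lesssim\varepsilon_0,
\]
since $Zu(0)=ixu_0$ and \eqref{eq:small_initial} holds. Combining the two time ranges yields the claimed bound with $\la t\ra$.

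The main obstacle is the rigorous justification of the energy identity. This requires $Zu\in L^2$ to persist, and it requires $Z$ applied to the Duhamel term to make sense, since the domain of $x$ is not preserved a priori. I would handle this with the truncated weights and a limiting argument; the commutator estimates above are uniform in the truncation, up to bounded error terms.
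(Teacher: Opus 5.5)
Your proposal is correct and follows the paper's proof essentially step by step. The paper also uses the energy estimate for $Zu$ driven by $[Z,V]u$; for $t\ge1$ it combines Lemma \ref{lem:commutator}, Lemma \ref{lem:comparison_GZ}, mass conservation and the bootstrap bound $I(t)^{1/2}\le\varepsilon^2t^{-1}$ with Gr\"onwall, and on $[0,1]$ it uses \eqref{eq:small_time_commutator}, $I^{1/2}\lesssim\|u\|_{L^3}^2$ and Corollary \ref{cor:short_time_est}. Your remark on justifying the energy identity by truncated weights only makes explicit a step the paper leaves implicit.
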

\begin{proof}
    Since $Z(t)$ is transported by $U_q(t)$, it commutes with the linear evolution. Applying $Z(t)$ to \eqref{Hartree} gives
    \[
    \left(i\partial_t -\Delta +\lambda V\right)Zu=-\lambda \left[Z(t),V\right]u.
    \]
    Since the potential is real-valued and $H_q$ is a self-adjoint operator on $L^2(\dom)$, they do not contribute to the energy estimate. Hence, Lemma \ref{lem:commutator} gives
    \begin{align*}
        \partial_t \|Zu(t)\|_{L^2(\dom)}\lesssim \left(I(t)^\frac{1}{2}+t^{-1}\|u(t)\|_{L^2}^2\right)\|Gu(t)\|_{L^2(\dom)}.
    \end{align*}
    Using Lemma \ref{lem:comparison_GZ} and the conservation of mass \eqref{eq:mass}, we conclude that
    \[
    \partial_t  \|Zu(t)\|_{L^2(\dom)}\lesssim \left(I(t)^\frac{1}{2}+t^{-1}\varepsilon_0^2\right)\left(\varepsilon_0+\|Zu(t)\|_{L^2(\dom)}\right).
    \]
    Gr\"onwall inequality, together with the bootstrap assumption \eqref{eq:boot}, yields
    \begin{align}\label{eq:Zu_large_time}
        \|Zu(t)\|_{L^2(\dom)}\leq \left(\varepsilon_0+\|Zu(1)\|_{L^2(\dom)}\right)t^{K(\varepsilon_0^2+\varepsilon^2)}.  
    \end{align}
    To propagate $\|Zu(t)\|_{L^2}$ for small time interval, we use \eqref{eq:small_time_commutator}. The same energy estimate gives
    \begin{align*}
        \partial_t \|Zu(t)\|_{L^2(\dom)}\lesssim I(t)^\frac{1}{2} \left(\varepsilon_0+\|Zu(t)\|_{L^2}\right)+\varepsilon_0^2 \|u(t)\|_{L^6(\dom)}.
    \end{align*}
    To handle the interaction during small time, we define the full-space interaction quantity as
    \[
    \tilde I(t):=\iint_{\R^3\times \R^3}\frac{|\even u(x,t)|^2 \,|\even u(y,t)|^2}{|x-y|^2}\, dx\,dy.
    \]
    Decomposing the full-space interaction into same-side and opposite-side contributions and using the symmetry of the even extension, together with \eqref{eq:reflected_geometry}, yields 
    \[
    2I(t)\leq \tilde I(t)\leq 4I(t).
    \]
    Consequently, we have
    \begin{align*}
        I(t)\simeq \tilde I(t)\lesssim\|\even u(t)\|_{L^3(\R^3)}^4\lesssim \|u(t)\|_{L^3(\dom)}^4.
    \end{align*}
    Note that $Zu(0)=ixu_0$. Hence, Gr\"onwall inequality, Corollary \ref{cor:short_time_est}, \eqref{eq:small_initial} and H\"older inequality in time imply
    \begin{align*}
        \|Zu(1)\|_{L^2(\dom)} \leq \left(\varepsilon_0+\varepsilon_0^2\|u\|_{L^1_t L^6_x([0,1]\times \dom)}\right)\exp\left(C\|u\|^2_{L^2_t L^3_x([0,1]\times \dom)}\right)\,\lesssim \varepsilon_0. 
    \end{align*}
    Plugging back into \eqref{eq:Zu_large_time} finishes the proof.
\end{proof}

We next derive the evolution of $I(t)$. 
\begin{lemma}\label{lem:improved_I}
    Under the bootstrap assumption \eqref{eq:boot} for $0\leq t\leq T$ and the smallness assumption on initial data \eqref{eq:small_initial}, suppose additionally that
    $K(\varepsilon_0^2+\varepsilon^2)\leq\frac18$,
    where $K$ is the constant in Corollary \ref{cor:improved_Z}, we have, 
    \begin{equation}
        t^2I(t)\leq C\varepsilon_0^4.
    \end{equation}
\end{lemma}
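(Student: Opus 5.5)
The plan is to derive a differential identity for $t^2 I(t)$ whose forcing is integrable in time thanks to Corollary \ref{cor:improved_Z}, and to integrate it from $t=1$, handling $[0,1]$ by a direct bound. To avoid boundary terms, I would replace $I$ by the full-space quantity $\tilde I(t)$ from the proof of Corollary \ref{cor:improved_Z}, built from $\even u$. Since $2I\le\tilde I\le 4I$, it suffices to prove $t^2\tilde I(t)\lesssim\varepsilon_0^4$.

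\textbf{Step 1 (continuity equation on $\R^3$).} For $u_0\in D(H_q)$, which is enough by the regularity statement in Proposition \ref{prop:LWP} and a density/continuity argument, the density $\rho=|\even u|^2=\even(|u|^2)$ satisfies a local conservation law
\[
\partial_t\rho+c\,\nabla\cdot j=0,\qquad j=\mathrm{Im}\big(\overline{\even u}\,\nabla \even u\big),
\]
holding in the distributional sense on all of $\R^3$. The tangential components of $j$ are even in $x_1$ and $j_1$ is odd. Crucially, $\mathrm{Im}(\bar u\,\partial_1u)=\mathrm{Im}(q|u|^2)=0$ on $\partial\dom$ by the Robin condition, so $j_1$ has no jump across $\{x_1=0\}$ and no boundary flux appears.

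Next I express $j$ through the Galilean field. From the parity computations,
\[
G_j\even u=\even G_ju\quad (j=2,3),\qquad G_1\even u=\odd G_1 u,
\]
since $2t\partial_1\even u$ and $ix_1\even u$ are both $\sgn(x_1)$ times their half-space counterparts. Writing $\nabla=\frac{1}{2t}(G-ix)$ gives
\[
j=\frac{1}{2t}\mathrm{Im}\big(\overline{\even u}\,G\even u\big)-\frac{x}{2t}\rho .
\]

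\textbf{Step 2 (the ODE).} I differentiate $\tilde I=\iint \rho(x)\rho(y)|x-y|^{-2}\,dx\,dy$ and integrate by parts, which is legitimate on $\R^3$ by Step 1. The transport part $-\frac{x}{2t}\rho$, after symmetrizing in $(x,y)$, produces $(x-y)\cdot\nabla_x|x-y|^{-2}=-2|x-y|^{-2}$. This yields exactly the term $-\frac{2}{t}\tilde I$, which is what makes $t^2\tilde I$ the natural quantity; I would fix the constant $c$ carefully so that this coefficient is indeed $-2/t$. The remainder is
\[
\frac{d}{dt}\big(t^2\tilde I\big)=C\,t\int \mathrm{Im}\big(\overline{\even u}\,G\even u\big)\cdot\nabla\big(|x|^{-2}*\rho\big)\,dx .
\]
Since $|x|^{-2}*\rho=c|\nabla|^{-1}\rho$, the gradient $\nabla(|x|^{-2}*\rho)=cR\rho$ is a Riesz transform of $\rho$, bounded on $L^3$ by \eqref{eq:Riesz-transform-bounds}. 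H\"older's inequality, the bound $\|\rho\|_{L^3}=\|\even u\|_{L^6}^2$, and \eqref{eq:L6_trick} then give
\[
\Big|\frac{d}{dt}(t^2\tilde I)\Big|\lesssim t\,\|u\|_{L^6}^3\|Gu\|_{L^2}\lesssim t^{-2}\|Gu(t)\|_{L^2}^4 .
\]

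\textbf{Step 3 (closing).} Lemma \ref{lem:comparison_GZ}, mass conservation and Corollary \ref{cor:improved_Z} give $\|Gu(t)\|_{L^2}\lesssim\varepsilon_0 \langle t\rangle^{K(\varepsilon_0^2+\varepsilon^2)}\le \varepsilon_0\langle t\rangle^{1/8}$. Hence the forcing is $\lesssim\varepsilon_0^4t^{-3/2}$, which is integrable on $[1,\infty)$.

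For the initial value and for $t\le1$, I use the pointwise bound
\[
t^2\tilde I\lesssim t^2\|u\|_{L^3}^4\lesssim t^2\|u\|_{L^2}^2\|u\|_{L^6}^2\lesssim\|u\|_{L^2}^2\|Gu\|_{L^2}^2\lesssim\varepsilon_0^4\quad\text{for } t\le 1 .
\]
Integrating the ODE from $1$ to $t$ then gives $t^2I(t)\le C\varepsilon_0^4$.

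The main obstacle is Step 1, namely justifying the integration by parts on $\R^3$ without boundary terms. This depends on the Robin condition killing the normal flux and on verifying that the parity of $G\even u$ is compatible with the symmetrization. If that fails, I would fall back to working directly on $\dom\times\dom$ with the reflected kernels, using \eqref{eq:reflected_geometry}.
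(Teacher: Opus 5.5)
Your proposal is correct and is essentially the paper's argument. It uses the same identity for $\partial_t(t^2I)$, whose forcing pairs $\Im(\bar u\,Gu)$ with a Riesz transform of $|u|^2$. It bounds that forcing by $t^{-2}\|Gu(t)\|_{L^2}^4$ and integrates from $t=1$ using Corollary~\ref{cor:improved_Z} and Lemma~\ref{lem:comparison_GZ}, and it uses the same HLS and $L^6$ bound on $(0,1]$.

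The only difference is cosmetic. The paper differentiates $I$ directly on $\dom\times\dom$ and checks that the two boundary terms vanish, via $y_1=0$ and $\Im(\bar u\,G_1u)=2t\,\Im(q|u|^2)=0$. You instead differentiate the even-extended $\tilde I$ on $\R^3$, where the same Robin fact appears as the absence of a jump in the normal current across $\{x_1=0\}$.
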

\begin{proof}
We first notice that, with Galilean vector field, one can write
\[
\Im(\overline u\,\nabla u)=\frac{1}{2t}\Big(\Im(\overline u\,G u)-x\,|u|^2\Big).
\]
Differentiating $I(t)$ in $t$ and using the symmetry in $(x,y)$, we get 
\begin{equation*}
    \begin{split}
       \partial_t I(t)
       &=4\iint_{\dom\times \dom}\frac{|u(x)|^2\,\mathrm{div}_y \Im(\overline u\,\nabla_y u)(y)}{|x-y|^2}\,dxdy\\
       &=\frac{2}{t}\iint_{\dom\times \dom}\frac{|u(x)|^2\,\mathrm{div}_y\big(\Im(\overline u\,G u)(y)\big)}{|x-y|^2}\,dxdy\,-\,\frac{2}{t}\iint_{\dom\times \dom}\frac{|u(x)|^2\,\mathrm{div}_y\big(y\,|u(y)|^2\big)}{|x-y|^2}\,dxdy\\
       &=: A(t)+B(t).
    \end{split}
\end{equation*}
Integrating by parts in $y$ yields\footnote{The following computations are presented formally. Here and in the computations for $B(t)$, one of the intermediate $y$-integral should be interpreted as a principal value. Alternatively, one may consider first the interaction kernel $(\kappa^2+\vert x-y\vert^2)^{-1}$ and let $\kappa\to0^+$.}
\begin{align*}
B(t) &=\frac{4}{t}\iint_{\dom\times\dom} |u(x)|^2\,y\cdot\frac{x-y}{|x-y|^4}\,|u(y)|^2\,dydx +\frac{2}{t}\int_{\dom}\int_{\partial\dom}\frac{|u(x)|^2\,|u(y)|^2\,y_1}{|x-y|^2}\,dS_ydx\\
&=-\frac{2}{t}\iint_{\dom\times \dom}\frac{|u(x)|^2\,|u(y)|^2}{|x-y|^2}\,dydx
=-\frac{2}{t}\,I(t),
\end{align*}
where the boundary term vanishes because $y_1=0$, and the last
line follows after a symmetry argument in $x$ and $y$. Similarly, we also have
\begin{align*}
A(t)=&-\frac{4}{t}\iint_{\dom\times \dom} |u(x)|^2\,\Im(\overline uG u)(y)\cdot\frac{x-y}{|x-y|^4}\,dydx\\
&\quad -\frac{2}{t}\int_{\dom}\int_{\partial\dom}\frac{|u(x)|^2}{|x-y|^2}\,\Im\big(\overline uG_1u\big)(y)\,dS_ydx.
\end{align*}
On the boundary, we have $\Im(\bar uG_1u)=2t\Im\left(\bar u\partial_1 u\right)=2t\Im (q|u|^2)$; hence the second term vanishes and we write
\begin{align*}
    \partial_t I(t)=-\frac{2}{t}I(t)+A(t).
\end{align*}
Equivalently, we write
\begin{equation}\label{eq:t2I}
\partial_t\big(t^2 I(t)\big)=t^2A(t).
\end{equation}
To estimate $A(t)$, extend $u$ and $Gu$ by zero to $\R^3$ and denote using $\tilde \cdot$. The kernels $\frac{x_j-y_j}{|x-y|^4}$ are the kernels of the Riesz transforms $R_j$, up to fixed constants. Their $L^p$ boundedness, H\"older inequality and \eqref{eq:L6_trick} give
\begin{align*}
    |A(t)|&\simeq t^{-1}\left|\sum_{j=1}^3 \int_{\R^3}\left|\tilde u(x,t)\right|^2 R_j \left(\Im (\overline{\tilde u}\,\widetilde{G_ju})\right)(x,t)dx\right|\lesssim t^{-1}\|u(t)\|_{L^6(\dom)}^2\|\bar u(t)Gu(t)\|_{L^\frac{3}{2}(\dom)}\\
    &\lesssim t^{-1}\|u(t)\|_{L^6(\dom)}^3\|Gu(t)\|_{L^2(\dom)}\,\lesssim\, t^{-4}\|Gu(t)\|_{L^2(\dom)}^4.
\end{align*}
Integrating \eqref{eq:t2I} and using the improved estimate in Corollary \ref{cor:improved_Z}, we have, for $1\leq t\leq T$, 
\begin{equation}\label{eq:It_est}
\begin{split}
    t^2I(t)&\leq I(1)+C\int_1^t s^{-2}\|Gu(s)\|_{L^2(\dom)}^4ds\\
    &\lesssim I(1)+\varepsilon_0^4 \int_1^\infty s^{-2+4K(\varepsilon_0^2+\varepsilon^2)}\,ds
    \lesssim I(1)+\varepsilon_0^4,
\end{split}
\end{equation}
since $K(\varepsilon_0^2+\varepsilon^2)\leq\frac18$
bounds the last integral by $2$. We are only left with the small time propagation of $I(t)$. Hardy–Littlewood–Sobolev inequality and \eqref{eq:L6_trick} give
\[
I(t)\lesssim \|u(t)\|_{L^2(\dom)}^2\|u(t)\|_{L^6(\dom)}^2\lesssim t^{-2}\|u(t)\|_{L^2(\dom)}^2\|Gu(t)\|_{L^2(\dom)}^2.
\]
Hence, conservation of mass \eqref{eq:mass} and the short-time bound from Corollary \ref{cor:improved_Z} give
\[
t^2I(t)\lesssim \varepsilon_0^4, \qquad\text{for } t\in (0,1].
\]
Plugging the resulting estimate $I(1)\lesssim\varepsilon_0^4$
back into \eqref{eq:It_est} proves the claimed estimate.
\end{proof}

We can now close the bootstrap. Moreover, we state the global well-posedness we obtained. 
\begin{proposition}\label{prop:GWP}
    Every $u_0\in L^2(\dom)$ gives rise to a unique global solution in the Duhamel sense to \eqref{Hartree}--\eqref{BC} with
    \[
    u\in C([0,\infty);L^2(\dom)) \cap L^4_{\mathrm{loc}}([0,\infty);L^3(\dom)).
    \]
    If $u_0\in H^{0,1}$ is sufficiently small according to \eqref{eq:small_initial}, this solution satisfies
    \begin{equation}\label{eq:slow_growth_Gu}
        \|Gu(t)\|_{L^2(\dom)} \lesssim_{q,\lambda}\varepsilon_0\la t\ra^{K\varepsilon_0^2}, \qquad t\geq0,
    \end{equation}
    where $K=K(q,\lambda)>0$ is a constant depending only on $q$ and $\lambda$. For $u_0\in D(H_q)$, the solution also satisfies
    \[
    u\in C([0,\infty);D(H_q)) \cap C^1([0,\infty);L^2(\dom)).
    \]
\end{proposition}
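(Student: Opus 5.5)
The proof has three parts: global existence for arbitrary $L^2$ data, a continuity argument closing the bootstrap \eqref{eq:boot} for small $H^{0,1}$ data, and persistence of $D(H_q)$ regularity.

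\textbf{Global existence for all $L^2$ data.} By Proposition \ref{prop:LWP}, the local existence time $T$ depends only on $\|u_0\|_{L^2}$. The mass \eqref{eq:mass} is conserved, so we can iterate the local theory from $u(T)$, $u(2T)$, and so on, with the same step $T$. This produces a unique global Duhamel solution in $C([0,\infty);L^2)\cap L^4_{\mathrm{loc}}L^3$. Uniqueness on each slab comes from the contraction argument. The concatenated function is again a Duhamel solution because $U_q$ has the group property.

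\textbf{Bootstrap for small $H^{0,1}$ data.} Choose $\varepsilon$ with $\varepsilon_0\ll\varepsilon\ll1$, and with $K(\varepsilon_0^2+\varepsilon^2)\le \frac18$ and $K(\varepsilon_0^2+\varepsilon^2)\le\delta$. Let $T^*$ be the supremum of times $T$ for which \eqref{eq:boot} holds on $[0,T]$. To see that $T^*>0$ and to run the continuity argument, we need continuity in $t$ of $t I(t)^{1/2}$ and of $\|Zu(t)\|_{L^2}$.
\begin{itemize}
\item For $\|Zu(t)\|_{L^2}$, use $\|Zu(t)\|=\|x\,U_q(-t)u(t)\|$. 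Continuity follows by approximating with regular data and using the energy identity of Corollary \ref{cor:improved_Z}, which is uniform in the approximation.
\item For $tI(t)^{1/2}$, use the bound $I\lesssim \|u\|_{L^2}^2\|u\|_{L^6}^2$, or the short-time bound of Corollary \ref{cor:improved_Z}. This gives $tI(t)^{1/2}\lesssim \varepsilon_0^2$ near $t=0$.
\end{itemize}
At $t=0$ we have $\|Zu(0)\|=\|xu_0\|\le\varepsilon_0$, so the bootstrap quantity is at most $\varepsilon_0^2\ll \varepsilon^2$. On $[0,T]$, Corollary \ref{cor:improved_Z} gives
\[
\langle t\rangle^{-2\delta}\|Zu\|^2\le K^2\varepsilon_0^2\langle t\rangle^{2K(\varepsilon_0^2+\varepsilon^2)-2\delta}\le K^2\varepsilon_0^2 .
\]
Lemma \ref{lem:improved_I} gives $tI^{1/2}\le C^{1/2}\varepsilon_0^2$. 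Both are at most $\varepsilon^2/2$ once $\varepsilon_0$ is small relative to $\varepsilon$. Continuity then shows $T^*=\infty$.

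\textbf{Growth bound \eqref{eq:slow_growth_Gu}.} Now pick $\varepsilon$ comparable to $\varepsilon_0$, for example $\varepsilon=\varepsilon_0^{1/2}$, and rerun the argument with that choice. Corollary \ref{cor:improved_Z} gives $\|Zu(t)\|\lesssim\varepsilon_0\langle t\rangle^{K(\varepsilon_0^2+\varepsilon^2)}$. Lemma \ref{lem:comparison_GZ} together with mass conservation then gives $\|Gu(t)\|\lesssim_q\varepsilon_0\langle t\rangle^{K'\varepsilon_0^{\,\cdot}}$. To obtain the exponent exactly $K\varepsilon_0^2$, feed the improved bound $t^2I\lesssim\varepsilon_0^4$ back into the Gr\"onwall step of Corollary \ref{cor:improved_Z}. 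The rate there becomes $I^{1/2}+t^{-1}\varepsilon_0^2\lesssim \varepsilon_0^2 t^{-1}$, so the exponent is $K\varepsilon_0^2$.

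\textbf{Persistence of regularity.} For $u_0\in D(H_q)$, the $H^2$ local theory of Proposition \ref{prop:LWP} can be continued as long as $\|u\|_{H^1}$ stays bounded; by the estimate $\|Vu\|_{H^2}\lesssim\|u\|_{H^1}^2\|u\|_{H^2}$ and Gr\"onwall, the $H^2$ norm then grows at most exponentially. Conservation of energy \eqref{eq:energy} controls $\|\nabla u\|_{L^2}$, using $|\int V|u|^2|\lesssim\|u\|_{L^2}^3\|u\|_{H^1}$ and that $q>0$ makes the boundary term nonnegative. This holds for either sign of $\lambda$, because the estimate is subquadratic in $\|u\|_{H^1}$. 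The resulting $H^2$ solution coincides with the $L^2$ solution by uniqueness.

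\textbf{Main obstacle.} The hardest step is the rigor of the continuity argument. The energy identity for $Zu$ and the derivative of $I(t)$ are formal at $L^2$ regularity. I would justify them by approximating with data in $D(H_q)\cap H^{0,1}$, closing all bounds uniformly along the approximation, and passing to the limit using $L^2$ continuous dependence from Proposition \ref{prop:LWP}. The $H^{0,1}$-type bounds then pass to the limit by weak lower semicontinuity.
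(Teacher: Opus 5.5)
Your proof is correct. The first two parts follow the paper: you iterate the local theory using mass conservation, then close \eqref{eq:boot} with Corollary \ref{cor:improved_Z} and Lemma \ref{lem:improved_I}. You depart from the paper in two places.

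\textbf{The exponent.} The paper takes $\varepsilon=A\varepsilon_0$ with $A=A(q,\lambda)$ large, so the Gr\"onwall exponent $K(\varepsilon_0^2+\varepsilon^2)=K(1+A^2)\varepsilon_0^2$ already has the required form. Your a posteriori step is also valid: close the bootstrap with any admissible $\varepsilon$, then feed $tI(t)^{1/2}\lesssim\varepsilon_0^2$ back into the differential inequality for $\|Zu\|_{L^2}$ on $[1,\infty)$. This makes the choice of $\varepsilon$ irrelevant. Note, however, that $\varepsilon=\varepsilon_0^{1/2}$ is not comparable to $\varepsilon_0$. With that choice the first-pass exponent is of order $K\varepsilon_0$, and only your feedback step yields $K\varepsilon_0^2$.

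\textbf{Persistence of $D(H_q)$.} The paper avoids the energy altogether. It uses the sharper bound
\[
\|Vu\|_{H^2}\lesssim\bigl(\|u\|_{L^2}\|u\|_{L^6}+\|u\|_{L^2}^{1/2}\|u\|_{L^6}^{3/2}\bigr)\|u\|_{H^2}.
\]
The coefficient lies in $L^1_{\mathrm{loc}}$ by mass conservation and the Strichartz bound $u\in L^2_{\mathrm{loc}}L^6$, so a linear Gr\"onwall argument in the graph norm rules out blow-up.

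Your route uses energy conservation, the positivity of the boundary term from $q>0$, and $|\int V|u|^2|\lesssim\|u\|_{L^2}^3\|\nabla u\|_{L^2}$. It works for either sign of $\lambda$ and gives a uniform-in-time $H^1$ bound as a bonus. The cost is that it needs the energy identity, including its boundary term, to be rigorously justified for $D(H_q)$ solutions. The paper's argument needs only quantities already controlled at the $L^2$ level.

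Your handling of continuity near $t=0$ is more careful than the paper's. For $H^{0,1}$ data, $I(0)$ may be infinite, and bounding $tI^{1/2}\lesssim\|u\|_{L^2}\|Gu\|_{L^2}$ is the right way around that.
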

\begin{proof}
    Proposition \ref{prop:LWP} gives a lifespan depending only on the $L^2$-norm of the initial data. Since mass is conserved, the same local existence time can be used at every stage of the evolution and gives a unique global solution. Strichartz estimates also give $u\in L^2_{\mathrm{loc}}([0,\infty);L^6(\dom))$.
    
    Fix some absolute exponent $\delta<\frac1{16}$. Set $\varepsilon=A\varepsilon_0$, where $A$ is a sufficiently large fixed constant depending only on $q$ and $\lambda$, and then choose $\varepsilon_0$ sufficiently small that
    \[
    K(1+A^2)\varepsilon_0^2\leq\delta,
    \]
    with $K$ as in Corollary \ref{cor:improved_Z}. Corollary \ref{cor:improved_Z} and Lemma \ref{lem:improved_I} strictly improve the bootstrap assumption \eqref{eq:boot}. Hence, together with Lemma \ref{lem:comparison_GZ}, we have \eqref{eq:slow_growth_Gu}, after enlarging constant $K$. 
    
    We are only left to prove the persistence of $D(H_q)$-regularity. Suppose now that $u_0\in D(H_q)$. We derive an estimate that rules out finite-time growth of the $H_q$ norm. The boundedness of the Riesz transforms give
    \[
    \big\|\nabla^2V(t)\big\|_{L^2(\dom)}\lesssim \|u(t)\|_{L^4(\dom)}^2 \lesssim \|u(t)\|_{L^2(\dom)}^\frac{1}{2}\|u(t)\|_{L^6(\dom)}^\frac{3}{2}
    \]
    The Hardy--Littlewood--Sobolev inequality also gives
    \[
    \|\nabla V(t)\|_{L^3(\dom)} \lesssim\|u(t)\|_{L^3(\dom)}^2 \lesssim\|u(t)\|_{L^2(\dom)}\|u(t)\|_{L^6(\dom)}.
    \]
    Leibniz's rule and Sobolev embeddings, together with \eqref{eq:V_Linfty} and the preceding estimates, give
    \[
    \|V(t)u(t)\|_{H^2(\dom)} \lesssim\left(\|u(t)\|_{L^2(\dom)}\|u(t)\|_{L^6(\dom)}+\|u(t)\|_{L^2(\dom)}^{1/2}
    \|u(t)\|_{L^6(\dom)}^{3/2}\right)\|u(t)\|_{H^2(\dom)}.
    \]
    As seen in Proposition \red{\ref{prop:LWP}}, $Vu$ satisfies the Robin boundary condition whenever $u$ does.
    Using that $U_q(t)$ is an isometry in the graph norm and \eqref{eq:Hq_equivalence}, we have
     \begin{align*}
        &\|u(t)\|_{D(H_q)}\\
        &\quad\leq \|u_0\|_{D(H_q)}+ C_{q,\lambda} \int_0^t \left(\|u(s)\|_{L^2(\dom)}\|u(s)\|_{L^6(\dom)} + \|u(s)\|_{L^2(\dom)}^\frac{1}{2}\|u(s)\|_{L^6(\dom)}^\frac{3}{2}\right)\|u(s)\|_{D(H_q)}\,ds.
    \end{align*}
    On every finite time interval, the coefficient multiplying $\|u(s)\|_{D(H_q)}$ belongs to $L^1$, by mass conservation and $u\in L^2_{\mathrm{loc}}L^6$. Gr\"onwall's inequality therefore prevents the graph norm from becoming infinite at any finite time. The local strong solution consequently extends globally.
\end{proof}


\section{Modified scattering}\label{sec:asymptotics}
In this section, we prove the modified-scattering statement in Theorem \ref{thm:dom}. Fix the exponent $0<\eta<\frac{1}{2}$ in Theorem \ref{thm:dom}, and choose $0<\delta<\frac{1}{16}$ so that $\frac{1}{2}-2\delta\geq \eta$. This $\delta$ is independent of the initial data. After decreasing $\varepsilon_0$ so that $K\varepsilon_0^2\leq \frac{\delta}{2}$, Proposition \ref{prop:GWP} gives
\[
\|Gu(t)\|_{L^2(\dom)}\lesssim \varepsilon_0\la t\ra ^\frac{\delta}{2}, \qquad \text{and for }t\geq 1,\,\,\,\,\,\, \|u(t)\|_{L^6(\dom)}\lesssim \varepsilon_0t^{-1+\frac{\delta}{2}}.
\]


\subsection{The renormalized profile}
Define the rescaled profile 
\begin{equation}\label{g}
  g(y,t) := t^{\frac{3}{2}}\,e^{\,i\frac{|y|^2}{4}t}\,u(ty,t),\qquad (y,t)\in \R^3_+\times[1,\infty).
\end{equation}
A direct computation using \eqref{Hartree} gives
\begin{equation}\label{eq:dtg}
  i\partial_t g(y,t)=-\lambda V(ty,t)\,g(y,t)\ +\frac{1}{2\sqrt{t}}\nabla_y\!\cdot\!\Big(e^{\,i\frac{|y|^2}{4}t}\,G u(ty,t)\Big).
\end{equation}
This is a perturbed ODE in $g$, which can be integrated by setting
\begin{equation}\label{eq:h}
  h(y,t):= g(y,t)\,e^{-i\lambda\,\theta(y,t)},\qquad \theta(y,t):=\int_{1}^{t}V(sy,s) \, ds.
\end{equation}
to get
\begin{equation}\label{eq:dth}
  i\partial_t h(y,t)\ =\ \frac{1}{2\sqrt{t}}\,e^{-\,i\lambda\,\theta(y,t)}\nabla_y\cdot\!\Big(e^{\,i\frac{|y|^2}{4}t}\,G u(ty,t)\Big).
\end{equation}

\medskip

We first estimate the spatial variation of the phase. 
\begin{lemma}
    For $s\geq 1$, $j=1,2,3$ one has
\begin{equation}\label{eq:dyphi_L3}
  \|\partial_{y_j}(V(sy,s))\|_{L^3_y(\dom)}\ \lesssim \varepsilon_0^2 s^{-1+\frac{\delta}{2}},
\end{equation}
hence,
\begin{equation}\label{eq:dytheta_L3}
  \|\partial_{y_j}\theta(y,s)\|_{L^3_y(\dom)}\ \lesssim \varepsilon_0^2s^{\frac{\delta}{2}}.
\end{equation}
\end{lemma}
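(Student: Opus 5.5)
The plan is a scaling computation followed by the estimate \eqref{eq:nabla_Ve} and the bootstrap bound on $I$.

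By the chain rule, $\partial_{y_j}(V(sy,s))=s\,(\partial_jV)(sy,s)$. Changing variables $x=sy$, which maps $\dom$ onto itself, gives
\[
\|\partial_{y_j}(V(sy,s))\|_{L^3_y(\dom)}=s\cdot s^{-1}\|\partial_jV(s)\|_{L^3_x(\dom)}=\|\partial_jV(s)\|_{L^3(\dom)},
\]
since $dy=s^{-3}dx$ contributes the factor $s^{-1}$ in the $L^3$ norm. Restriction from the even extension then gives $\|\partial_jV(s)\|_{L^3(\dom)}\le\|\nabla\even V(s)\|_{L^3(\R^3)}$, and \eqref{eq:nabla_Ve} bounds this by $I(s)^{1/2}$.

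For $I$ there are two routes.

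\emph{Route 1.} Lemma \ref{lem:improved_I} gives $s^2I(s)\le C\varepsilon_0^4$, hence $I(s)^{1/2}\lesssim\varepsilon_0^2 s^{-1}$. This is even better than the claimed bound.

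\emph{Route 2 (in the form displayed).} By Hardy--Littlewood--Sobolev, $\|\nabla V\|_{L^3}\lesssim\|u\|_{L^3}^2\lesssim\|u\|_{L^2}\|u\|_{L^6}$. Mass conservation \eqref{eq:mass} together with $\|u(s)\|_{L^6}\lesssim\varepsilon_0 s^{-1+\delta/2}$ from the start of this section then gives $\varepsilon_0^2s^{-1+\delta/2}$.

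Either route proves \eqref{eq:dyphi_L3}. There is no real obstacle here; the only point to check is the bookkeeping of the scaling factor.

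For \eqref{eq:dytheta_L3}, differentiate under the integral in \eqref{eq:h}:
\[
\partial_{y_j}\theta(y,s)=\int_1^s\partial_{y_j}(V(\tau y,\tau))\,d\tau.
\]
Minkowski's integral inequality in $L^3_y$ and \eqref{eq:dyphi_L3} give
\[
\|\partial_{y_j}\theta(s)\|_{L^3_y}\lesssim\varepsilon_0^2\int_1^s\tau^{-1+\delta/2}\,d\tau\le\frac{2}{\delta}\,\varepsilon_0^2 s^{\delta/2}.
\]
Since $\delta$ is a fixed absolute exponent, the factor $2/\delta$ is absorbed into the implicit constant. Differentiating under the integral is justified for regular solutions, and the bound extends by density or by interpreting the derivative distributionally.
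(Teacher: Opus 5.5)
Your argument is correct, and your Route 2 is exactly the paper's proof. The paper bounds $|\partial_{y_j}(V(sy,s))|$ pointwise by $s\,(|x|^{-2}*|\even u(s)|^2)(sy)$. After the same rescaling and Hardy--Littlewood--Sobolev this gives $\|\even u(s)\|_{L^3}^2\lesssim\|u\|_{L^2}\|u\|_{L^6}\lesssim\varepsilon_0^2s^{-1+\delta/2}$, and the paper then integrates in time with Minkowski.

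Your Route 1 is a valid alternative. Once the bootstrap has closed (Proposition \ref{prop:GWP}), the bound $s^2I(s)\leq C\varepsilon_0^4$ of Lemma \ref{lem:improved_I} holds for all $s\geq1$. Combined with \eqref{eq:nabla_Ve}, it gives the sharper rate $\varepsilon_0^2s^{-1}$, so $\nabla\theta$ grows only like $\varepsilon_0^2\log s$. The paper does not need this, because the $s^{\delta/2}$ loss is affordable wherever \eqref{eq:dytheta_L3} is used, namely in \eqref{eq:h_H1_growth} and Proposition \ref{prop:H_minus1_convergence}.
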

\begin{proof}
Differentiating \eqref{eq:even-potential}, we obtain, for $j=1,2,3$,
\[
    \left|\partial_{y_j}(V(ty,t))\right|\lesssim
    t\int_{\R^3} \frac{|\even u(x,t)|^2}{|x-ty|^2}\,dx.
\]
After taking the $L^3_y$-norm, the Hardy-Littlewood-Sobolev inequality gives
\begin{align*}
    \|\partial_{y_j}(V(ty,t))\|_{L^3_y} &\lesssim
    \big\||\nabla|^{-1}|\even u(t)|^2\big\|_{L^3(\R^3)}\lesssim
    \|\even u(t)\|_{L^3(\R^3)}^2 \lesssim
    \|u(t)\|_{L^2(\dom)}
    \|u(t)\|_{L^6(\dom)} \\
    &\lesssim \varepsilon_0^2t^{-1+\delta/2}.
\end{align*}
The estimate for $\theta$ follows by integrating in time and applying Minkowski's inequality.
\end{proof}

We next obtain a convergence of $h$ in a negative Sobolev space. 
\begin{proposition}\label{prop:H_minus1_convergence}
There exists $h_\infty\in H^{-1}(\dom)$ such that
\begin{equation}\label{eq:Hminus1_convergence}
    \|h(t)-h_\infty\|_{H^{-1}(\dom)} \lesssim \varepsilon_0 t^{-1+\delta}, \qquad \text{for }\,\,t\geq1.
\end{equation}
\end{proposition}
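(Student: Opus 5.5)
The plan is to integrate \eqref{eq:dth} in time and show that $\|\partial_t h(t)\|_{H^{-1}(\dom)}\lesssim \varepsilon_0 t^{-2+\delta}$. That bound is integrable on $[1,\infty)$. We then set $h_\infty:=h(1)-i\int_1^\infty \partial_t h\,ds$, the integral converging in $H^{-1}(\dom)$, and read off $\|h(t)-h_\infty\|_{H^{-1}}\le\int_t^\infty\|\partial_sh\|_{H^{-1}}ds\lesssim\varepsilon_0t^{-1+\delta}$.

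To estimate $\partial_t h$ in $H^{-1}(\dom)=(H^1_0(\dom))^*$, test \eqref{eq:dth} against $\varphi\in C_c^\infty(\dom)$. Write $F(y,t):=e^{i\frac{|y|^2}{4}t}Gu(ty,t)$, and move the divergence onto the test function; since $\varphi$ vanishes near $\partial\dom$, there is no boundary term. This gives
\[
\Big|\langle \partial_t h,\varphi\rangle\Big|\le \frac{1}{2\sqrt t}\Big|\int_\dom F\cdot\nabla_y\big(e^{i\lambda\theta}\overline{\varphi}\big)\,dy\Big|
\le \frac{1}{2\sqrt t}\|F\|_{L^2_y}\Big(\|\nabla\varphi\|_{L^2}+|\lambda|\,\|\nabla_y\theta\|_{L^3_y}\|\varphi\|_{L^6}\Big).
\]
For the second term, Sobolev embedding on the half space (via even extension) gives $\|\varphi\|_{L^6}\lesssim\|\nabla\varphi\|_{L^2}$, and \eqref{eq:dytheta_L3} gives $\|\nabla_y\theta\|_{L^3}\lesssim\varepsilon_0^2 t^{\delta/2}$. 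For the first factor, a change of variables gives $\|F(t)\|_{L^2_y}=t^{-3/2}\|Gu(t)\|_{L^2(\dom)}\lesssim\varepsilon_0 t^{-3/2+\delta/2}$, using the slow-growth bound \eqref{eq:slow_growth_Gu} in the form recorded at the start of this section. Collecting the powers of $t$ yields
\[
\|\partial_t h(t)\|_{H^{-1}}\lesssim \varepsilon_0 t^{-2+\delta/2}(1+\varepsilon_0^2t^{\delta/2})\lesssim\varepsilon_0t^{-2+\delta},
\]
as required.

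The main point needing care is that this pairing computation is legitimate at the available regularity, since \eqref{eq:dth} was derived formally. I would first carry out the argument for data $u_0\in D(H_q)$, where Proposition~\ref{prop:GWP} gives a strong solution and $\theta$ is regular enough. In that case \eqref{eq:dth} holds in the distributional sense on $\dom$, and the pairing against $C_c^\infty(\dom)$ functions is justified. The bound on $\partial_t h$ then extends by density, because it involves only $\|Gu\|_{L^2}$ and $\|u\|_{L^2}$, and both are continuous with respect to the data. Pairing only with functions compactly supported in the open half space, which is exactly what $H^{-1}(\dom)$ requires, is what removes any boundary contribution. I do not expect the Robin condition to enter at this step.
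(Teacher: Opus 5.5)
Your proposal is correct and matches the paper's proof: you pair \eqref{eq:dth} with a test function vanishing on $\partial\dom$, integrate by parts, rescale $x=ty$, and use \eqref{eq:slow_growth_Gu}, \eqref{eq:dytheta_L3} and Sobolev to get $\|\partial_t h\|_{H^{-1}}\lesssim\varepsilon_0 t^{-2+\delta}$, then integrate in time. The differences are cosmetic: there are two sign slips that leave the bounds unchanged (the integrand should be $e^{-i\lambda\theta}\overline{\varphi}$, and the limit should be $h_\infty=h(1)+\int_1^\infty\partial_s h\,ds$), and you add a density justification for regular data that the paper leaves implicit.
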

\begin{proof}
Let $\varphi\in H_0^1(\dom)$. Pairing \eqref{eq:dth} with $\varphi$ and integrating by parts in $y$, we obtain
\begin{align*}
    i\partial_t\la h(t),\varphi\ra =-\frac{1}{2\sqrt t}
    \int_{\dom}e^{i t\frac{|y|^2}{4}}Gu(ty,t)\cdot
    \nabla_y\left(e^{-i\lambda\theta(y,t)}\overline{\varphi(y)}\right)dy.
\end{align*}
The trace of $\varphi$ vanishes, so there is no boundary term. Since
\[
    \nabla\left(e^{-i\lambda\theta}\bar \varphi\right)=e^{-i\lambda\theta}\left(\nabla\bar \varphi-i\lambda \bar \varphi\nabla\theta\right),
\]
H\"older's inequality and the change of variables by $x=ty$ give
\begin{align*}
    \left|\partial_t\langle h(t),\varphi\rangle\right|\lesssim
    t^{-2}\|Gu(t)\|_{L^2(\dom)}\left(\Vert\nabla\varphi\Vert_{L^2(\dom)}
    +\|\varphi\|_{L^6(\dom)}\|\nabla\theta(t)\|_{L^3(\dom)}\right) \lesssim \varepsilon_0t^{-2+\delta}\|\varphi\|_{H^1(\dom)}.
\end{align*}
Thus
\[
    \|\partial_th(t)\|_{H^{-1}(\dom)} \lesssim \varepsilon_0t^{-2+\delta}.
\]
Since this is integrable on $[1,\infty)$, $h(t)$ converges in $H^{-1}$ to some $h_\infty$, and
\[
    \|h(t)-h_\infty\|_{H^{-1}(\dom)} \lesssim \varepsilon_0\int_t^\infty s^{-2+\delta}\,ds \lesssim \varepsilon_0t^{-1+\delta}.
\]
This proves \eqref{eq:Hminus1_convergence}.
\end{proof}

To upgrade this convergence, we first record the $H^1$-growth of $h$. By mass conservation and the definition of $g$,
\begin{equation}\label{eq:conserved_h}
     \|h(t)\|_{L^2(\dom)} =\|g(t)\|_{L^2(\dom)}=\|u(t)\|_{L^2(\dom)}=\|u_0\|_{L^2(\dom)}.
\end{equation}
Differentiating $g$ gives the exact identity
\[
    \nabla_yg(y,t)=\frac{1}{2}t^{\frac{3}{2}}e^{i t\frac{|y|^2}{4}}(Gu)(ty,t).
\]
Hence
\[
    \Vert g(t)\Vert_{L^6(\mathbb{R}^3_+)}\lesssim \|\nabla g(t)\|_{L^2(\dom)} =\frac{1}{2}\|Gu(t)\|_{L^2(\dom)}\lesssim\varepsilon_0\la t\ra^{\delta/2}.
\]
Since
\[
    \nabla h =e^{-i\lambda\theta} \left(\nabla g-i\lambda g\nabla\theta\right),
\]
we have
\begin{align}\label{eq:h_H1_growth}
    \|h(t)\|_{H^1(\dom)}&\lesssim \|h(t)\|_{L^2(\dom)}+\|\nabla g(t)\|_{L^2(\dom)}+\|g(t)\|_{L^6(\dom)}\|\nabla\theta(t)\|_{L^3(\dom)}\lesssim \varepsilon_0\la t\ra^\delta.
\end{align}
This growth, together with the $H^{-1}$-convergence, gives additional regularity of the limit.

\begin{lemma}\label{lem:limit_regularity}
For every $-1\leq s<1-2\delta$, we have $h_\infty\in H^s(\dom)$ and $\|h_\infty\|_{H^s(\dom)}\lesssim_{s,\delta}\varepsilon_0$.
\end{lemma}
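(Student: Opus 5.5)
The plan is to combine the quantitative $H^{-1}$ convergence from Proposition \ref{prop:H_minus1_convergence} with the slow $H^1$ growth \eqref{eq:h_H1_growth}, using the interpolation inequality \eqref{eq:half-space-Sobolev-interpolation} on dyadic time scales. The case $s=-1$ is immediate: $h(1)$ is bounded in $L^2\subset H^{-1}$ by \eqref{eq:conserved_h}, so $\|h_\infty\|_{H^{-1}}\leq \|h(1)\|_{H^{-1}}+\|h(1)-h_\infty\|_{H^{-1}}\lesssim\varepsilon_0$. It therefore suffices to treat $-1<s<1-2\delta$, and we may take $s\ge 0$ if convenient, since lower $s$ follows from the embedding $H^{s}\hookrightarrow H^{s'}$ for $s'<s$.

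Set $t_k=2^k$ for $k\geq0$. The first step is to show that $h(t_k)$ is Cauchy in $H^s(\dom)$ with a summable bound. For consecutive times we use
\[
\|h(t_{k+1})-h(t_k)\|_{H^{-1}}\lesssim \varepsilon_0 2^{-k(1-\delta)},
\]
which follows from \eqref{eq:Hminus1_convergence} and the triangle inequality through $h_\infty$. We also use $\|h(t_{k+1})-h(t_k)\|_{H^1}\lesssim \varepsilon_0 2^{k\delta}$, which is \eqref{eq:h_H1_growth}. Then \eqref{eq:half-space-Sobolev-interpolation} gives
\[
\|h(t_{k+1})-h(t_k)\|_{H^s}\lesssim \varepsilon_0\, 2^{-k\left[(1-\delta)\frac{1-s}{2}-\delta\frac{1+s}{2}\right]}=\varepsilon_0\,2^{-k\frac{(1-s)-2\delta}{2}}.
\]
The exponent is positive exactly when $s<1-2\delta$, so the series converges geometrically. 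It follows that $h(t_k)$ converges in $H^s$, and that $\|\lim_k h(t_k)\|_{H^s}\le \|h(1)\|_{H^s}+C_{s,\delta}\varepsilon_0$.

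For the initial term, interpolating the $L^2$ bound \eqref{eq:conserved_h} with the $H^1$ bound \eqref{eq:h_H1_growth} at $t=1$ gives $\|h(1)\|_{H^s}\lesssim\varepsilon_0$ for $0\le s\le1$. This is another reason to reduce to $s\geq0$.

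It remains to identify the $H^s$ limit of $h(t_k)$ with $h_\infty$. Both $H^s$ convergence and $H^{-1}$ convergence imply convergence in distributions, since $H^s\hookrightarrow H^{-1}$, so the two limits coincide. Hence $h_\infty\in H^s$ with the claimed bound.

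The only delicate point is the bookkeeping of exponents, namely checking that the interpolation exponent is positive precisely for $s<1-2\delta$. Beyond that the argument is routine.
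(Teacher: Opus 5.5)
Your proposal is correct and follows essentially the same route as the paper. Both arguments take dyadic increments $h(2^{k+1})-h(2^k)$ and interpolate between the $H^{-1}$ decay and the $H^1$ growth, which gives the summable rate $2^{-k\frac{(1-s)-2\delta}{2}}$. Both then identify the $H^s$ limit with $h_\infty$ through uniqueness of the $H^{-1}$ (distributional) limit. The differences are cosmetic: you bound the increments in $H^{-1}$ via the triangle inequality through $h_\infty$ rather than by integrating $\|\partial_t h\|_{H^{-1}}$, and you treat $s=-1$ and the initial term $h(1)$ explicitly, which the paper leaves implicit.
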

\begin{proof}
Set
\[
    d_n=h(2^{n+1})-h(2^n).
\]
The $H^1$-growth \eqref{eq:h_H1_growth} gives
\[
    \|d_n\|_{H^1}\lesssim\varepsilon_0 2^{n\delta},
\]
while Proposition \ref{prop:H_minus1_convergence} gives
\[
    \|d_n\|_{H^{-1}}\leq\int_{2^n}^{2^{n+1}}\|\partial_t h(\tau)\|_{H^{-1}(\dom)}\,d\tau\lesssim\varepsilon_0 2^{-n(1-\delta)}.
\]
Interpolating between $H^{-1}$ and $H^1$, we obtain
\[
    \|d_n\|_{H^s}\lesssim\|d_n\|_{H^1}^{\frac{s+1}{2}}\|d_n\|_{H^{-1}}^{\frac{1-s}{2}}\lesssim\varepsilon_0 2^{n\frac{s+2\delta-1}{2}}.
\]
The exponent is negative precisely when $s<1-2\delta$; hence, the telescoping series
\[
    h(1)+\sum_{n=0}^\infty d_n
\]
converges in $H^s$, with norm bounded by $C_{s,\delta}\varepsilon_0$. The uniqueness of the $H^{-1}$-limit identifies its limit with $h_\infty$, which proves the claim.
\end{proof}

We can now obtain quantitative $L^2$-convergence.
\begin{proposition}\label{prop:L2_profile_convergence}
Set
\[
    \rho=\frac12-\delta>\frac{3\delta}{2}.
\]
Then
\begin{equation}\label{eq:L2_profile_convergence}
    \|h(t)-h_\infty\|_{L^2(\dom)}\lesssim\varepsilon_0t^{-\rho},\qquad t\geq1.
\end{equation}
In particular, $\rho>\delta$ and $\|h_\infty\|_{L^2}=\|u_0\|_{L^2}$.
\end{proposition}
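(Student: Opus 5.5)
The plan is to interpolate between the $H^{-1}$ convergence rate of Proposition \ref{prop:H_minus1_convergence} and a bound on $h(t)-h_\infty$ in a positive Sobolev norm. The natural choice of that norm is $H^1$, using the growth bound \eqref{eq:h_H1_growth}. The difficulty is that $h_\infty$ is not known to lie in $H^1$: Lemma \ref{lem:limit_regularity} only gives $h_\infty\in H^s$ for $s<1-2\delta$. Rather than interpolating $h(t)-h_\infty$ directly, I will therefore use the dyadic telescoping from the proof of Lemma \ref{lem:limit_regularity}, now at $s=0$.

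Fix $t\geq1$ and take the dyadic points $t_k=2^kt$. Write
\[
h_\infty-h(t)=\sum_{k\ge0}\big(h(t_{k+1})-h(t_k)\big),
\]
which converges in $H^{-1}$ by Proposition \ref{prop:H_minus1_convergence}. For each increment $D_k=h(t_{k+1})-h(t_k)$ I would use two bounds:
\[
\|D_k\|_{H^{-1}}\le\int_{t_k}^{t_{k+1}}\|\partial_\tau h\|_{H^{-1}}\,d\tau\lesssim\varepsilon_0 t_k^{-1+\delta},
\qquad
\|D_k\|_{H^1}\lesssim\varepsilon_0 t_k^{\delta}.
\]
The first comes from the bound $\|\partial_t h\|_{H^{-1}}\lesssim\varepsilon_0t^{-2+\delta}$, and the second from \eqref{eq:h_H1_growth}. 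Interpolation \eqref{eq:half-space-Sobolev-interpolation} with $s=0$ then gives
\[
\|D_k\|_{L^2}\lesssim\|D_k\|_{H^{-1}}^{1/2}\|D_k\|_{H^{1}}^{1/2}\lesssim\varepsilon_0 t_k^{-\frac12+\delta}=\varepsilon_0(2^kt)^{-\rho}.
\]
Since $\rho=\tfrac12-\delta>0$ (recall $\delta<\tfrac1{16}$), the geometric series sums to $\lesssim\varepsilon_0t^{-\rho}$. Hence the telescoping sum converges in $L^2$. Its $L^2$ limit must coincide with the $H^{-1}$ limit, because $L^2\hookrightarrow H^{-1}$. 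This proves \eqref{eq:L2_profile_convergence}.

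The remaining claims are simple. The inequalities $\rho>\tfrac{3\delta}{2}$ and $\rho>\delta$ amount to $\tfrac12>\tfrac{5\delta}{2}$, which holds since $\delta<\tfrac1{16}$. For the norm identity, $\|h(t)\|_{L^2}=\|u_0\|_{L^2}$ for every $t$ by \eqref{eq:conserved_h}, and strong $L^2$ convergence passes this to the limit, giving $\|h_\infty\|_{L^2}=\|u_0\|_{L^2}$.

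The only delicate step is the one already addressed: not interpolating with $h_\infty$ itself, whose $H^1$ norm is unavailable, but working with the increments $D_k$. A minor point is the time-derivative bound for $h$ in $H^{-1}$ on the interval $[t_k,t_{k+1}]$. This bound is the one established in the proof of Proposition \ref{prop:H_minus1_convergence}, and I would cite it from there.
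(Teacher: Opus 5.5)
Your proof is correct and follows the paper's argument: a dyadic telescoping sum in time, with each increment interpolated between the $H^{-1}$ bound coming from $\|\partial_t h\|_{H^{-1}}\lesssim\varepsilon_0 t^{-2+\delta}$ and the $H^1$ growth \eqref{eq:h_H1_growth}. The only difference is that you anchor the dyadic sequence at $t$ itself ($t_k=2^kt$), whereas the paper anchors it at $2^N\le t<2^{N+1}$. Your choice folds the paper's separate interpolation of $h(t)-h(2^N)$ into the geometric sum.
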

\begin{proof}
Fix $t\geq1$ and choose an integer $N\geq0$ such that $2^N\leq t<2^{N+1}$. Taking $s=0$ in the proof of Lemma \ref{lem:limit_regularity}, we obtain
\[
    \|h(2^N)-h_\infty\|_{L^2}\leq\sum_{n=N}^\infty\|d_n\|_{L^2}\lesssim\varepsilon_0 2^{-N\rho}.
\]
On the other hand, \eqref{eq:Hminus1_convergence} and \eqref{eq:h_H1_growth} give
\[
    \|h(t)-h(2^N)\|_{H^{-1}} \lesssim\varepsilon_0 2^{-N(1-\delta)},\qquad \|h(t)-h(2^N)\|_{H^1}\lesssim \varepsilon_0 2^{N\delta}.
\]
Interpolating these estimates yields
\begin{align*}
    \|h(t)-h(2^N)\|_{L^2}&\lesssim\|h(t)-h(2^N)\|_{H^{-1}}^{\frac12}\|h(t)-h(2^N)\|_{H^1}^{\frac12}\lesssim\varepsilon_0 2^{-N\frac{1-\delta}{2}}2^{N\frac{\delta}{2}}
    =\varepsilon_0 2^{-N\rho}.
\end{align*}
Consequently,
\[
    \|h(t)-h_\infty\|_{L^2}\leq\|h(t)-h(2^N)\|_{L^2}+\|h(2^N)-h_\infty\|_{L^2}\lesssim\varepsilon_0t^{-\rho}.
\]
This proves \eqref{eq:L2_profile_convergence}. Taking the limit in \eqref{eq:conserved_h} proves the mass identity.
\end{proof}


\subsection{Asymptotics of the phase}\label{subsec:phase_asymptotics}
The homogeneity of the Neumann kernel from \eqref{KnKernel} determines the leading phase explicitly: since
\[
    K_n(ty,tz)=\frac{1}{t}K_n(y,z),
\]
the change of variables $x=tz$ gives
\begin{align}\label{eq:rescaled_V}
    V(ty,t)=\int_{\dom}K_n(ty,x)|u(x,t)|^2\,dx=\frac{1}{t} \int_{\dom}K_n(y,z)|g(z,t)|^2\,dz =
    \frac{1}{t} \int_{\dom}K_n(y,z)|h(z,t)|^2\,dz.
\end{align}
This suggests defining
\begin{equation}\label{eq:phi_infty}
    \phi_\infty(y) =\int_{\dom} K_n(y,z)|h_\infty(z)|^2\,dz.
\end{equation}
With \eqref{eq:reflected_geometry}, we have $ 0\leq K_n(y,z)\lesssim |y-z|^{-1}$; hence, Young's inequality \eqref{eq:endpoint-Coulomb-Lorentz} gives
\[
    \|\phi_\infty\|_{L^{3,\infty}(\dom)} \lesssim\|h_\infty\|_{L^2(\dom)}^2\lesssim\varepsilon_0^2. 
\]
The convergence of $h$ now gives the asymptotic behavior of $V$.
\begin{lemma}\label{lem:potential_asymptotics}
For $t\geq1$, we have
\begin{equation}\label{eq:potential_asymptotics}
    \left\| V(ty,t)-\frac1t\phi_\infty(y) \right\|_{L^{3,\infty}_y(\dom)}
    \lesssim \varepsilon_0^2t^{-1-\rho}.
\end{equation}
\end{lemma}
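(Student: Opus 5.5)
Using the rescaling identity \eqref{eq:rescaled_V} and the definition \eqref{eq:phi_infty}, the quantity to estimate is exactly
\[
V(ty,t)-\frac1t\phi_\infty(y)=\frac1t\int_{\dom}K_n(y,z)\big(|h(z,t)|^2-|h_\infty(z)|^2\big)\,dz .
\]
So the lemma reduces to a Coulomb-type bound on the density difference $|h(t)|^2-|h_\infty|^2$. I would avoid any further structure and simply use the pointwise kernel bound $0\le K_n(y,z)\lesssim|y-z|^{-1}$. This bound follows from \eqref{eq:reflected_geometry}, exactly as was done for $\phi_\infty$.

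Extending the difference by zero to $\R^3$, the endpoint Coulomb estimate \eqref{eq:endpoint-Coulomb-Lorentz} gives
\[
\Big\|V(ty,t)-\tfrac1t\phi_\infty\Big\|_{L^{3,\infty}_y(\dom)}\lesssim \frac1t\,\big\||h(t)|^2-|h_\infty|^2\big\|_{L^1(\dom)} .
\]
Since $|h|$ is dominated by the positive kernel, we may put absolute values inside; $L^{3,\infty}$ is a lattice quasi-norm, so this is legitimate. Next, factor $|a|^2-|b|^2=\Re\big((a-b)\overline{(a+b)}\big)$ and apply Cauchy–Schwarz:
\[
\big\||h(t)|^2-|h_\infty|^2\big\|_{L^1}\le \|h(t)-h_\infty\|_{L^2}\big(\|h(t)\|_{L^2}+\|h_\infty\|_{L^2}\big).
\]

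The first factor is bounded by $\varepsilon_0 t^{-\rho}$ thanks to Proposition \ref{prop:L2_profile_convergence}. The second is at most $2\|u_0\|_{L^2}\le2\varepsilon_0$, by \eqref{eq:conserved_h} together with the mass identity $\|h_\infty\|_{L^2}=\|u_0\|_{L^2}$. Combining these gives the bound $\varepsilon_0^2t^{-1-\rho}$.

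There is essentially no obstacle here. The only point needing care is rigorous justification of \eqref{eq:rescaled_V} for $L^2$ data, namely the change of variables and the homogeneity $K_n(ty,tz)=t^{-1}K_n(y,z)$; both are immediate since $|u|^2\in L^1$.
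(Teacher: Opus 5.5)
Your proposal is correct and follows essentially the same route as the paper. You rewrite the difference via \eqref{eq:rescaled_V}, bound $K_n$ by the Coulomb kernel using \eqref{eq:reflected_geometry}, apply the endpoint Lorentz--Young estimate \eqref{eq:endpoint-Coulomb-Lorentz}, factor $|h|^2-|h_\infty|^2$ with Cauchy--Schwarz, and conclude with Proposition \ref{prop:L2_profile_convergence} and \eqref{eq:conserved_h}.
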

\begin{proof}
From \eqref{eq:rescaled_V}, we write
\[
    V(ty,t)-\frac1t\phi_\infty(y) =\frac{1}{t} \int_{\dom} K_n(y,z)\left(|h(z,t)|^2-|h_\infty(z)|^2\right)\,dz.
\]
\eqref{eq:reflected_geometry} and Young's inequality imply
\begin{align*}
     \left\| V(ty,t)-\frac1t\phi_\infty \right\|_{L^{3,\infty}(\dom)}&\lesssim\frac{1}{t}
    \big\||h(t)|^2-|h_\infty|^2\big\|_{L^1(\dom)}\\
    &\lesssim t^{-1}\|h(t)-h_\infty\|_{L^2(\dom)}\left(\|h(t)\|_{L^2(\dom)}+\|h_\infty\|_{L^2(\dom)}\right).
\end{align*}
Proposition \ref{prop:L2_profile_convergence} and \eqref{eq:conserved_h} therefore gives the desired estimate.
\end{proof}

Define the time-independent phase correction
\begin{equation}\label{eq:ell}
    \ell(y)=\int_1^\infty\left(V(sy,s)-\frac1s\phi_\infty(y)\right)\,ds.
\end{equation}
Lemma \ref{lem:potential_asymptotics} shows that this integral converges in $L^{3,\infty}(\dom)$. Moreover, we have
\[
    \theta(y,t)-\phi_\infty(y)\log t-\ell(y)=-\int_t^\infty\left(V(sy,s)-\frac1s\phi_\infty(y)\right)\,ds.
\]
Consequently,
\begin{equation}\label{eq:phase_asymptotics}
    \left\|\theta(t)-\phi_\infty\log t-\ell\right\|_{L^{3,\infty}(\dom)}\lesssim \varepsilon_0^2t^{-\rho}.
\end{equation}
Thus the logarithmic term is the only unbounded contribution to the phase; the remaining correction converges to the time-independent function $\ell$. 

We are ready to prove the modified scattering stated in Theorem \ref{thm:dom}. 
\begin{proof}[Proof of Theorem \ref{thm:dom}]
Since $g=he^{i\lambda\theta}$, we write
\begin{align*}
    g(t)-h_\infty e^{i\lambda(\phi_\infty\log t+\ell)}=h(t)\left(e^{i\lambda\theta(t)}-e^{i\lambda(\phi_\infty\log t+\ell)}\right) +\big(h(t)-h_\infty\big)e^{i\lambda (\phi_\infty\log t+\ell)}.
\end{align*}
Using $|e^{ix}-e^{iy}|\leq |x-y|$ for $x,y\in \R$, the Lorentz-space H\"older inequality gives
\begin{align*}
    &\left\|g(t)-h_\infty e^{i\lambda(\phi_\infty\log t+\ell)}\right\|_{L^2(\dom)}\\
    &\qquad\, \lesssim \,\|h(t)\|_{L^{6,2}(\dom)}\left\|\theta(t)-\phi_\infty\log t-\ell\right\|_{L^{3,\infty}(\dom)} +\|h(t)-h_\infty\|_{L^2(\dom)}.
\end{align*}
The Sobolev-Lorentz embedding on the half-space gives
\[
    \|h(t)\|_{L^{6,2}(\dom)} \lesssim \|h(t)\|_{H^1(\dom)}\lesssim\varepsilon_0\la t\ra^\delta.
\]
Therefore,
\[
    \left\|g(t)-h_\infty e^{i\lambda(\phi_\infty\log t+\ell)}\right\|_{L^2(\dom)}
    \lesssim \varepsilon_0t^{-\rho+\delta}.
\]
Set
\[
    \tilde h_\infty(y)=h_\infty(y)e^{i\lambda\ell(y)}.
\]
Then $\tilde h_\infty\in L^2(\dom)$, and
\[
    \phi_\infty(y)=\int_{\dom} K_n(y,z)|\tilde h_\infty|^2\,dz.
\]
Hence, we obtain
\begin{equation}\label{eq:final_profile_asymptotics}
    g(y,t)=\tilde h_\infty(y)e^{i\lambda\log t\,\phi_\infty(y)}+\varepsilon_0\mathcal{O}_{L^2(\dom)}(t^{- \rho+\delta}),
\end{equation}
where $\rho-\delta=\frac12-2\delta\geq\eta$ by our choice of $\delta$. Returning to the original variables gives
\begin{align*}
    u(x,t)=t^{-3/2}e^{-i|x|^2/(4t)}\tilde h_\infty\!\left(\frac{x}{t}\right)\exp\left(
    i\lambda\log t\,\phi_\infty\!\left(\frac{x}{t}\right)\right)+\varepsilon_0\mathcal{O}_{L^2(\dom)}(t^{- \rho+\delta}).
\end{align*}
Renaming $\tilde h_\infty$ as $h_\infty$ gives the modified-scattering statement in Theorem \ref{thm:dom}.
\end{proof}

\begin{remark}[The full-space problem]\label{rmk:full}
    We briefly explain how the argument here simplifies for the full-space part of Theorem \ref{thm:dom}, under the same $H^{0,1}$ assumption.

    \noindent \emph{Linear theory.} The linear theory is immediate. The standard dispersive and Strichartz estimates for $U_0(t)$ give the same $L^2$ local well-posedness argument as in Section \ref{sec:LWP}, without using the Robin extension.

    \noindent \emph{Vector field estimate.} The main simplification occurs in the vector-field estimate. On $\R^3$, the usual Galilean invariance directly leads to \eqref{LinH} for $Gu$. The full-space interaction quantity
    \[
    I_{\mathrm{full}}(t):=\iint_{\R^3\times \R^3}\frac{|u(x,t)|^2\,|u(y,t)|^2}{|x-y|^2}\,dx\,dy,
    \]
    and the estimations for $Z_2$ and $Z_3$ in Lemma \ref{lem:commutator} close the same bootstrap assumption \eqref{eq:boot}. 

    \noindent \emph{Modified scattering.} The modified-scattering argument in Section \ref{sec:asymptotics} then applies without change. The only difference is that all spaces are now taken over $\R^3$. 
\end{remark}


\appendix
\section{Full-space modified scattering with weaker spatial localization}\label{sec:fractional}

In this appendix, we prove Theorem~\ref{thm:full_fractional}. We work only on $\R^3$ and retain the rescaled profile and logarithmic phase used in Section~\ref{sec:asymptotics}. The additional ingredients are a fractional Galilean estimate and a uniform $L^3$ bound for the rescaled profile. All function spaces in this appendix are over $\R^3$, unless otherwise indicated.

Fix $\beta>\frac12$ and $0<\eta<\frac1{10}$ as in Theorem~\ref{thm:full_fractional}. Without loss of generality, we assume that $\beta\leq \frac{3}{4}$. We choose
\begin{equation}\label{eq:frac_parameters}
0<\delta<\min\left\{\frac1{16},\frac{\beta-\frac12}{5},\frac{1-10\eta}{8}\right\}.
\end{equation}
We consider and use the smallness of $\|\la x\ra^\beta u_0\|_{L^2}\leq\varepsilon_0$. Constants below may depend on $\beta$, $\delta$ and $\lambda$, but not on time or $\varepsilon_0$.


\subsection{Fractional estimates and small-time propagation}
We write $|\nabla|^a=(-\Delta)^{\frac{a}{2}}$. Let $\chi\in C_c^\infty(\R^3)$ be real-valued, equal to one on $\{|\xi|\leq1\}$ and supported in $\{|\xi|\leq2\}$, and set $P_{\leq N}=\chi(\frac{D}{N})$ and $P_{>N}=1-P_{\leq N}$. We first record a cutoff commutator estimate.

\begin{lemma}\label{lem:frac_projection_commutator}
Let $0<\kappa<1$, $1<p,q<\infty$ and $1\leq r<\infty$ satisfy $\frac1r=\frac1p+\frac1q$. Then
\begin{equation}\label{eq:frac_projection_commutator}
\|[P_{\leq N},f]v\|_{L^r}\lesssim_{\kappa,p,q}N^{-\kappa}\||\nabla|^\kappa f\|_{L^p}\|v\|_{L^q}.
\end{equation}
\end{lemma}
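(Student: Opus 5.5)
The plan is to write the commutator as an explicit integral operator and then split frequencies so that only the high-frequency part of $f$ contributes. Let $\check\chi_N(z)=N^3\check\chi(Nz)$ be the kernel of $P_{\leq N}$, so that
\[
[P_{\leq N},f]v(x)=\int_{\R^3}\check\chi_N(x-z)\big(f(z)-f(x)\big)v(z)\,dz .
\]
I would decompose $f=P_{\leq N/8}f+\sum_{M> N/8}P_Mf$ by Littlewood--Paley pieces; in practice it is cleaner to split $f=P_{\le cN}f+P_{>cN}f$ with a small absolute constant $c$.

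\textbf{High-frequency part.} For $P_{>cN}f$, bound the commutator crudely: $\|P_{\leq N}(P_{>cN}f\,v)\|_{L^r}+\|P_{>cN}f\,P_{\le N}v\|_{L^r}\lesssim \|P_{>cN}f\|_{L^p}\|v\|_{L^q}$, using H\"older and the uniform $L^r$ boundedness of $P_{\le N}$ for $1\le r<\infty$ (its kernel has uniformly bounded $L^1$ norm). The factor $N^{-\kappa}$ then comes from the Bernstein-type estimate $\|P_{>cN}f\|_{L^p}\lesssim N^{-\kappa}\||\nabla|^\kappa f\|_{L^p}$. This holds for $1<p<\infty$ because $P_{>cN}|\nabla|^{-\kappa}$ is a Mikhlin multiplier at scale $N$ of size $N^{-\kappa}$. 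Equivalently, sum $\|P_Mf\|_{L^p}\lesssim M^{-\kappa}\|P_M|\nabla|^\kappa f\|_{L^p}$ over $M\gtrsim N$, using $\kappa>0$ together with a square-function argument, or simply the multiplier theorem directly.

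\textbf{Low-frequency part.} For $F=P_{\le cN}f$, I would use the mean value formula
\[
F(z)-F(x)=\int_0^1 (z-x)\cdot\nabla F(x+\theta(z-x))\,d\theta .
\]
This gives $\big|[P_{\le N},F]v\big|(x)\le \int_0^1\!\!\int |z-x||\check\chi_N(x-z)|\,|\nabla F(x+\theta(z-x))|\,|v(z)|\,dz\,d\theta$. After the change of variables $w=x-z$, Minkowski and H\"older give
\[
\|[P_{\le N},F]v\|_{L^r}\lesssim \||w|\check\chi_N\|_{L^1}\|\nabla F\|_{L^p}\|v\|_{L^q}\lesssim N^{-1}\|\nabla P_{\le cN}f\|_{L^p}\|v\|_{L^q}.
\]
Translations preserve $L^p$ norms, so this step is uniform in $\theta$ and $w$. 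Finally, Bernstein gives $\|\nabla P_{\le cN}f\|_{L^p}\lesssim N^{1-\kappa}\||\nabla|^\kappa f\|_{L^p}$, since $\nabla|\nabla|^{-\kappa}P_{\le cN}$ has a kernel of $L^1$ norm $\lesssim N^{1-\kappa}$ when $\kappa<1$; this is where $\kappa<1$ is used. Combining the two parts gives \eqref{eq:frac_projection_commutator}.

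\textbf{Main obstacle.} The delicate point is justifying the Bernstein bounds for the homogeneous derivative $|\nabla|^\kappa$ with $1<p<\infty$. For the low part, I would verify that the symbol $\xi|\xi|^{-\kappa}\chi(\xi/cN)$ is homogeneous of degree $1-\kappa>-0$ near the origin and smooth elsewhere, so its inverse Fourier transform, rescaled, is integrable: it decays like $|x|^{-4+\kappa}$ at infinity and is locally integrable at the origin. For the high part, I would invoke Mikhlin's theorem for $(1-\chi(\xi/cN))|\xi|^{-\kappa}N^{\kappa}$, whose derivative bounds are uniform in $N$ by scaling. It is this step, not the $L^1$ kernel bounds, that requires $p>1$.
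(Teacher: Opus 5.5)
Your argument is correct, but it follows a different route from the paper's.

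\textbf{The paper's route.} The paper never splits $f$. It writes $[P_{\leq N},f]v(x)=\int K_N(y)\big(f(x-y)-f(x)\big)v(x-y)\,dy$ and applies Minkowski and H\"older once. It then invokes the fractional translation bound $\|f(\cdot-y)-f\|_{L^p}\lesssim|y|^\kappa\||\nabla|^\kappa f\|_{L^p}$. The factor $N^{-\kappa}$ then appears as the moment $\int|K_N(y)|\,|y|^\kappa\,dy=N^{-\kappa}\int|K(y)|\,|y|^\kappa\,dy$.

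\textbf{Your route.} You split $f=P_{\leq cN}f+P_{>cN}f$, using the mean-value formula on the low part and crude H\"older plus Bernstein on the high part. This is essentially the standard proof of that translation bound. You carry it out once at the fixed scale $N^{-1}$, the effective width of the kernel, rather than at scale $|y|$ for each $y$. That suffices because you only use $\|\check\chi_N\|_{L^1}\lesssim1$ and $\||w|\check\chi_N\|_{L^1}\lesssim N^{-1}$.

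\textbf{What each buys.} The paper's version is a one-line argument once the translation bound is accepted as a black box. Yours is self-contained up to two Bernstein-type multiplier bounds. It also makes explicit that $\kappa<1$ enters through the $|x|^{-4+\kappa}$ decay of the kernel of $\nabla|\nabla|^{-\kappa}P_{\leq cN}$. In the paper's route, the same condition is hidden inside the translation bound: proved via the Riesz potential, it needs integrability at infinity of $|w-y|^{-3+\kappa}-|w|^{-3+\kappa}$.

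\textbf{Minor remarks.} The high-frequency symbol $(1-\chi(\xi/cN))|\xi|^{-\kappa}$ is smooth. Its kernel is therefore $O(|x|^{-3+\kappa})$ near the origin and rapidly decaying at infinity, so it is integrable with $L^1$ norm $\lesssim N^{-\kappa}$ by scaling. Mikhlin's theorem is more than you need there, and that step does not really hinge on $p>1$. Likewise, the constant $c$ need not be small.
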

\begin{proof}
Write $P_{\leq N}v=K_N*v$, where $K_N(y)=N^3K(Ny)$ and $K$ is a Schwartz function. Then
\[
[P_{\leq N},f]v(x)=\int_{\R^3}K_N(y)\big(f(x-y)-f(x)\big)v(x-y)\,dy.
\]
Minkowski's and H\"older's inequalities, together with the fractional translation bound $\|f(\cdot-y)-f\|_{L^p}\lesssim |y|^\kappa\||\nabla|^\kappa f\|_{L^p}$, give
\[
\|[P_{\leq N},f]v\|_{L^r}\lesssim\||\nabla|^\kappa f\|_{L^p}\|v\|_{L^q}\int_{\R^3}|K_N(y)|\,|y|^\kappa\,dy.
\]
The last integral is $N^{-\kappa}\int|K(y)||y|^\kappa\,dy$, which proves the claim.
\end{proof}

We also use the fractional commutator inequality
\begin{equation}\label{eq:frac_KP}
\|[|\nabla|^\beta,f]v\|_{L^2}\lesssim_\beta\||\nabla|^\beta f\|_{L^{{\frac{3}{\beta}}}}\|v\|_{L^{p_*}},\qquad p_*:=\frac6{3-2\beta},
\end{equation}
which follows from the fractional Leibniz rule in \cite[Theorem~1.2]{Li} and H\"older's inequality. The Sobolev and Hardy--Littlewood--Sobolev inequalities give
\begin{equation}\label{eq:frac_Sobolev_HLS}
\|v\|_{L^{p_*}}\lesssim_\beta\||\nabla|^\beta v\|_{L^2},\qquad \big\||\nabla|^\beta(-\Delta)^{-1}|v|^2\big\|_{L^{{\frac{3}{\beta}}}}\lesssim_\beta\|v\|_{L^3}^2.
\end{equation}

Define the fractional Galilean operator
\begin{equation}\label{eq:frac_Gbeta}
G_\beta(t):=U_0(t)|x|^\beta U_0(-t).
\end{equation}
For $t>0$, functional calculus for the free Galilean operators yields
\begin{equation}\label{eq:frac_Gbeta_gauge}
G_\beta(t)=e^{-i\frac{|x|^2}{4t}}(2t|\nabla|)^\beta e^{i\frac{|x|^2}{4t}}.
\end{equation}
In particular, $G_\beta(t)U_0(t-\tau)=U_0(t-\tau)G_\beta(\tau)$ and $G_\beta(0)=|x|^\beta$.

\begin{lemma}\label{lem:frac_short_time}
For sufficiently small $\varepsilon_0$, the full-space solution with $\|\la x\ra^\beta u_0\|_{L^2}\leq\varepsilon_0$ satisfies
\begin{equation}\label{eq:frac_short_time}
\|u\|_{L_t^\infty L_x^2([0,1])}+\|u\|_{L_t^4L_x^3([0,1])}+\|G_\beta u\|_{L_t^\infty L_x^2([0,1])}\lesssim_\beta\varepsilon_0.
\end{equation}
In addition, $G_\beta u\in C([0,1];L^2)$.
\end{lemma}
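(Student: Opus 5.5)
The first two bounds in \eqref{eq:frac_short_time} come directly from the full-space analogue of Corollary \ref{cor:short_time_est}: the data satisfy $\|u_0\|_{L^2}\le\varepsilon_0$, so for $\varepsilon_0$ small the Strichartz contraction closes on $[0,1]$. This gives $\|u\|_{L^\infty_tL^2_x}+\|u\|_{L^4_tL^3_x}+\|u\|_{L^2_tL^6_x}\lesssim\varepsilon_0$. The substance of the lemma is therefore the bound on $G_\beta u$. My plan is to run a Gr\"onwall/energy argument for $w:=G_\beta u$ modelled on the small-time part of Corollary \ref{cor:improved_Z}, with \eqref{eq:frac_KP} replacing the one-dimensional commutator estimate.

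\textbf{Step 1: the equation for $w$.} Since $G_\beta(t)U_0(t-\tau)=U_0(t-\tau)G_\beta(\tau)$, applying $G_\beta(t)$ to the Duhamel formula gives
\[
w(t)=U_0(t)|x|^\beta u_0+i\lambda\int_0^tU_0(t-\tau)G_\beta(\tau)\big(V u\big)(\tau)\,d\tau .
\]
I then write $G_\beta(Vu)=V\,G_\beta u+[G_\beta,V]u$. By \eqref{eq:frac_Gbeta_gauge}, and because $V$ commutes with the gauge factor $e^{\pm i|x|^2/4t}$,
\[
[G_\beta(\tau),V]u=(2\tau)^\beta e^{-i\sigma}\big[|\nabla|^\beta,V\big](e^{i\sigma}u),\qquad \sigma=\frac{|x|^2}{4\tau}.
\]
The term $VG_\beta u$ is real-potential and drops out of the $L^2$ energy identity. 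This gives formally
\[
\partial_t\|w\|_{L^2}\le|\lambda|\,\|[G_\beta,V]u\|_{L^2}.
\]
To justify this, I work at the level of Duhamel for regularized data (for instance truncating $|x|^\beta$ or mollifying $u_0$) and pass to the limit. The same approximation gives continuity $G_\beta u\in C([0,1];L^2)$.

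\textbf{Step 2: the commutator bound.} I apply \eqref{eq:frac_KP} with $f=V$ and $v=e^{i\sigma}u$, then \eqref{eq:frac_Sobolev_HLS}:
\[
\|[G_\beta,V]u\|_{L^2}\lesssim\tau^\beta\,\||\nabla|^\beta V\|_{L^{3/\beta}}\,\|e^{i\sigma}u\|_{L^{p_*}}\lesssim \|u\|_{L^3}^2\,\tau^\beta\||\nabla|^\beta(e^{i\sigma}u)\|_{L^2}\simeq\|u(\tau)\|_{L^3}^2\,\|w(\tau)\|_{L^2}.
\]
Hence $\partial_t\|w\|_{L^2}\lesssim\|u(t)\|_{L^3}^2\|w\|_{L^2}$. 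By Gr\"onwall,
\[
\|w(t)\|_{L^2}\le\||x|^\beta u_0\|_{L^2}\exp\big(C\|u\|_{L^2_tL^3_x([0,1])}^2\big)\lesssim\varepsilon_0,
\]
since $\|u\|_{L^2_tL^3_x([0,1])}\le\|u\|_{L^4_tL^3_x([0,1])}$.

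\textbf{Main obstacle.} The main obstacle is making Step 1 rigorous for low-regularity solutions. The operator $G_\beta$ is nonlocal and unbounded, and the solution is only Duhamel in $L^2$, so the energy identity cannot be applied directly. I would first handle this by a contraction argument in the norm $\|G_\beta u\|_{L^\infty_tL^2}$, combined with Strichartz, on the Duhamel equation for $w$. I would place $Vw$ in the dual Strichartz space $L^{12/11}_tL^{9/5}_x$ as in Proposition \ref{prop:LWP}, which needs $w$ in $L^4_tL^3_x$ as well. I would place the commutator in $L^1_tL^2_x$ via the bound above. Smallness on $[0,1]$ then closes the estimate, and the energy-type Gr\"onwall argument is only used as a heuristic check.
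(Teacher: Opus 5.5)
Your argument is correct and its core is exactly the paper's. Both use the gauge form of $[G_\beta,V]$, the commutator bound \eqref{eq:frac_KP} with \eqref{eq:frac_Sobolev_HLS}, and Sobolev to trade $\|u\|_{L^{p_*}}$ for $\tau^{-\beta}\|G_\beta u\|_{L^2}$. Both then use the $L^2$ energy identity, in which the real potential drops out, and Gr\"onwall with $\int_0^1\|u\|_{L^3}^2\,d\tau\le\|u\|_{L^4_tL^3_x}^2\lesssim\varepsilon_0^2$.

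The only difference is in the justification. The paper keeps the energy/Gr\"onwall estimate as the proof and delegates rigor to a standard approximation argument. Your weighted Strichartz contraction, with $VG_\beta u$ in $L^{12/11}_tL^{9/5}_x$ and the commutator in $L^1_tL^2_x$, is an equally valid but somewhat heavier alternative. It also gives continuity and an $L^4_tL^3_x$ bound for $G_\beta u$ directly.
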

\begin{proof}
The first two bounds follow from the full-space version of Corollary \ref{cor:short_time_est}.
For $t>0$, set $v(x,t):=e^{i\frac{|x|^2}{4t}}u(x,t)$. By \eqref{eq:frac_Gbeta_gauge} and Sobolev embedding,
\[
\|u(t)\|_{L^{p_*}} =\|v(t)\|_{L^{p_*}}
\lesssim_\beta \||\nabla|^\beta v(t)\|_{L^2}=(2t)^{-\beta}\|G_\beta(t)u(t)\|_{L^2}.
\]
Combining this bound with \eqref{eq:frac_Gbeta_gauge}, the fractional commutator estimate and the Hardy--Littlewood--Sobolev inequality, we obtain
\begin{align}\label{eq:frac_physical_commutator}
\|[G_\beta(t),V(t)]u(t)\|_{L^2}&\lesssim t^\beta \||\nabla|^\beta V(t)\|_{L^{\frac{3}{\beta}}} \|u(t)\|_{L^{p_*}}\lesssim \|u(t)\|_{L^3}^2 \|G_\beta(t)u(t)\|_{L^2}.
\end{align}
Since $G_\beta$ commutes with $i\partial_t-\Delta$, we have
\[
i\partial_t(G_\beta u)-\Delta(G_\beta u)+\lambda V G_\beta u =-\lambda[G_\beta,V]u.
\]
Taking the $L^2$ energy estimate, the reality of $V$ and \eqref{eq:frac_physical_commutator} give
\begin{align*}
\frac12\partial_t\|G_\beta(t)u(t)\|_{L^2}^2
&\leq |\lambda|
\|[G_\beta(t),V(t)]u(t)\|_{L^2}
\|G_\beta(t)u(t)\|_{L^2}\lesssim_{\beta,\lambda} \|u(t)\|_{L^3}^2 \|G_\beta(t)u(t)\|_{L^2}^2.
\end{align*}
Using $G_\beta(0)u_0=|x|^\beta u_0$, Gr\"onwall's inequality yields

\begin{equation}\label{eq:frac_short_gronwall}
\|G_\beta(t)u(t)\|_{L^2}\leq \||x|^\beta u_0\|_{L^2}\exp\left(
C_{\beta,\lambda}\int_0^t\|u(\tau)\|_{L^3}^2\,d\tau \right).
\end{equation}
On $[0,1]$, the integral is bounded by
$\|u\|_{L_t^4L_x^3}^2\lesssim\varepsilon_0^2$,
proving the remaining bound.

A standard approximation argument, using \eqref{eq:frac_physical_commutator} and the local theory in Section \ref{subsec:LWP}, justifies the preceding estimates for the stated initial data and gives $G_\beta u\in C([0,T];L^2)$ for every finite $T$.
\end{proof}


\subsection{Bootstrap estimates}
For $t\geq1$, we work with
\begin{equation}\label{eq:frac_profile}
g(y,t)=t^{\frac{3}{2}}e^{i\frac{|y|^2}{4}t}u(ty,t),\qquad V_g(t):=(-\Delta)^{-1}|g(t)|^2.
\end{equation}
From the definition of $g$, \eqref{eq:frac_Gbeta_gauge} and scaling of the fractional derivative imply
\[
G_\beta(t){u(x,t)}=2^\beta t^{-\frac{3}{2}}e^{-i\frac{|x|^2}{4t}}\left(|\nabla|^\beta g(t)\right)\left(\frac{x}{t}\right).
\]
Mass conservation and a change of variables therefore give
\begin{equation}\label{eq:frac_profile_norms}
\|g(t)\|_{L^2}=\|u_0\|_{L^2},\qquad \|G_\beta(t)u(t)\|_{L^2}=2^\beta\||\nabla|^\beta g(t)\|_{L^2}.
\end{equation}
In particular, Lemma \ref{lem:frac_short_time} and Sobolev embedding yield $\|g(1)\|_{H^\beta}+\|g(1)\|_{L^3}\lesssim\varepsilon_0$. The equation for $g$ is
\begin{equation}\label{eq:frac_profile_equation}
i\partial_tg-t^{-2}\Delta g+\lambda t^{-1}V_g g=0.
\end{equation}
We use the bootstrap assumption
\begin{equation}\label{eq:frac_bootstrap}
\|g(t)\|_{L^3}+t^{-\delta}\||\nabla|^\beta g(t)\|_{L^2}\leq\varepsilon,\qquad 1\leq t\leq T,
\end{equation}
where $\varepsilon=A\varepsilon_0$ and $A$ represents an implicit large constant depending on $\beta,\lambda$ and $\delta$. This $L^3$ bound controls the interaction quantity from the main argument, since
\[
t^2I(t)=\iint_{\R^3\times\R^3}\frac{|g(y,t)|^2|g(z,t)|^2}{|y-z|^2}\,dy\,dz\lesssim\|g(t)\|_{L^3}^4.
\]

\begin{lemma}\label{lem:frac_improved_derivative}
Under \eqref{eq:frac_bootstrap},
\begin{equation}\label{eq:frac_improved_derivative}
\||\nabla|^\beta g(t)\|_{L^2}\leq\||\nabla|^\beta g(1)\|_{L^2}t^{C_{\beta,\lambda}\varepsilon^2},\qquad 1\leq t\leq T.
\end{equation}
\end{lemma}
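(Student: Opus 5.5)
We apply $|\nabla|^\beta$ to the profile equation \eqref{eq:frac_profile_equation} and run an $L^2$ energy estimate. The operator $-t^{-2}\Delta$ is self-adjoint and commutes with $|\nabla|^\beta$, so it drops out. The term $\lambda t^{-1}V_g\,|\nabla|^\beta g$ has a real potential and also drops out. This leaves
\[
\tfrac12\partial_t\||\nabla|^\beta g\|_{L^2}^2\leq |\lambda|\,t^{-1}\big\|[|\nabla|^\beta,V_g]g\big\|_{L^2}\,\||\nabla|^\beta g\|_{L^2}.
\]
Equivalently, this is the energy estimate for $G_\beta u$ from Lemma \ref{lem:frac_short_time} transported to profile variables, using \eqref{eq:frac_profile_norms}. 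The commutator is controlled with \eqref{eq:frac_KP} and then \eqref{eq:frac_Sobolev_HLS}:
\[
\big\|[|\nabla|^\beta,V_g]g\big\|_{L^2}\lesssim_\beta \||\nabla|^\beta V_g\|_{L^{3/\beta}}\|g\|_{L^{p_*}}\lesssim_\beta \|g\|_{L^3}^2\,\||\nabla|^\beta g\|_{L^2}.
\]
This is the same chain as \eqref{eq:frac_physical_commutator}, now written in $y$-variables. The two computations are consistent under scaling: $\|u(t)\|_{L^3}^2=t^{-1}\|g(t)\|_{L^3}^2$.

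Next we insert the $L^3$ part of the bootstrap \eqref{eq:frac_bootstrap}, namely $\|g(t)\|_{L^3}\leq\varepsilon$. This gives
\[
\partial_t\||\nabla|^\beta g(t)\|_{L^2}\leq C_{\beta,\lambda}\,\varepsilon^2 t^{-1}\||\nabla|^\beta g(t)\|_{L^2}.
\]
Gr\"onwall's inequality on $[1,t]$ then yields exactly \eqref{eq:frac_improved_derivative}. The growth $t^{C\varepsilon^2}$ is what improves the bootstrap once $C\varepsilon^2<\delta$, together with $\||\nabla|^\beta g(1)\|_{L^2}\lesssim\varepsilon_0$.

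The main technical point is justification rather than the estimate itself. At $H^{0,\beta}$ regularity, $|\nabla|^\beta g$ is only in $L^2$, so the differentiated energy identity is formal. We would handle this as in Lemma \ref{lem:frac_short_time}: prove the estimate for smooth, decaying approximating data, where all terms make sense, and pass to the limit using the $L^2$ local theory. The continuity $G_\beta u\in C([0,T];L^2)$ transfers to $|\nabla|^\beta g$ through \eqref{eq:frac_profile_norms}.

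Alternatively, the estimate can be stated in integrated form. Writing Duhamel for $G_\beta u$ and using the commutator bound at each time gives the same Gr\"onwall inequality without differentiating a norm. The one step needing care is checking that the exponents in \eqref{eq:frac_KP} are admissible for $\frac12<\beta\leq\frac34$. Here $\frac{3}{\beta}\in[4,6)$ and $p_*\in(3,4]$, so their reciprocals sum to $\frac12$ and Hölder closes.
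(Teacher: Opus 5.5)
Your proposal is correct and follows the paper's proof essentially step for step: apply $|\nabla|^\beta$ to \eqref{eq:frac_profile_equation}, observe that the Laplacian and the real potential drop out of the $L^2$ energy identity, bound $[|\nabla|^\beta,V_g]g$ by $\|g\|_{L^3}^2\||\nabla|^\beta g\|_{L^2}$ via \eqref{eq:frac_KP} and \eqref{eq:frac_Sobolev_HLS}, insert the $L^3$ part of \eqref{eq:frac_bootstrap}, and conclude with Gr\"onwall's inequality on $[1,t]$. Your justification by smooth approximation matches the paper's appeal to Lemma \ref{lem:frac_short_time}, and your H\"older-exponent and scaling checks are accurate.
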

\begin{proof}
Apply $|\nabla|^\beta$ to \eqref{eq:frac_profile_equation}. The Laplacian and the real multiplication term do not contribute to the $L^2$ energy estimate, so \eqref{eq:frac_KP}--\eqref{eq:frac_Sobolev_HLS} give
\begin{align*}
\partial_t\||\nabla|^\beta g(t)\|_{L^2}&\lesssim_\lambda t^{-1}\|[|\nabla|^\beta,V_g]g\|_{L^2}\\
&\lesssim_{\beta,\lambda}t^{-1}\||\nabla|^\beta V_g\|_{L^{{\frac{3}{\beta}}}}\|g\|_{L^{p_*}}\lesssim_{\beta,\lambda}t^{-1}\|g\|_{L^3}^2\||\nabla|^\beta g\|_{L^2}.
\end{align*}
Gr\"onwall's inequality proves \eqref{eq:frac_improved_derivative}. These energy estimates can be justified by the smooth approximation in Lemma \ref{lem:frac_short_time}.
\end{proof}

Choose $\varepsilon_0$ sufficiently small, this lemma then gives 
\begin{equation}\label{eq:frac_derivative_growth}
\||\nabla|^\beta g(t)\|_{L^2} \lesssim\varepsilon_0t^\delta.
\end{equation}

\begin{lemma}\label{lem:frac_improved_L3}
Under \eqref{eq:frac_bootstrap}, 
\[
\|g(t)\|_{L^3}\lesssim_{\beta,\delta,\lambda}\varepsilon_0, \quad 1\leq t\leq T.
\]
\end{lemma}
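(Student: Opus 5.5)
We prove the bound by a time-dependent frequency splitting of the profile $g$, with $N(t)=t^{\gamma}$ and $\gamma=\tfrac12$. Write
\[
g=P_{\leq N(t)}g+P_{>N(t)}g=:w+g_{\rm hi}.
\]
Throughout, we use \eqref{eq:frac_derivative_growth}, namely $\||\nabla|^\beta g(t)\|_{L^2}\lesssim\varepsilon_0t^\delta$, and the bootstrap bound $\|g\|_{L^3}\leq\varepsilon$ only inside nonlinear terms. At $t=1$ we have $\|g(1)\|_{L^3}\lesssim\varepsilon_0$, so $\|w(1)\|_{L^3}\lesssim\varepsilon_0$ by boundedness of $P_{\leq N}$ on $L^3$.

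\emph{High frequencies.} By Sobolev embedding $\dot H^{1/2}\subset L^3$ and Bernstein,
\[
\|g_{\rm hi}(t)\|_{L^3}\lesssim N^{\frac12-\beta}\||\nabla|^\beta g\|_{L^2}\lesssim\varepsilon_0t^{\delta-\gamma(\beta-\frac12)}.
\]
This is $\lesssim\varepsilon_0$ because $\gamma(\beta-\tfrac12)=\tfrac{2\beta-1}{4}>\delta$ by \eqref{eq:frac_parameters}.

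\emph{Low frequencies.} Apply $P_{\leq N(t)}$ to \eqref{eq:frac_profile_equation} and keep the real potential on the left-hand side:
\[
i\partial_tw+\lambda t^{-1}V_gw=F,
\]
\[
F:=t^{-2}\Delta P_{\leq N}g-\lambda t^{-1}[P_{\leq N},V_g]g+i\dot N\,(\partial_NP_{\leq N})g.
\]
Since $V_g$ is real, pointwise $\partial_t|w|\leq|F|$ holds (after regularization, as in Lemma \ref{lem:frac_short_time}). Hence $\partial_t\|w\|_{L^3}\leq\|F\|_{L^3}$, and it suffices to show $\int_1^\infty\|F\|_{L^3}\,dt\lesssim\varepsilon_0$. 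We bound the three terms of $F$ separately.

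\emph{Dispersive term.} Bernstein gives
\[
t^{-2}\|\Delta P_{\leq N}g\|_{L^3}\lesssim t^{-2}N^{\frac52-\beta}\||\nabla|^\beta g\|_{L^2}\lesssim\varepsilon_0t^{-2+\delta+\gamma(\frac52-\beta)}.
\]
This is integrable because $\tfrac12(\tfrac52-\beta)<1-\delta$, which holds since $\beta>\tfrac12+2\delta$.

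\emph{Cutoff-derivative term.} The multiplier $N\partial_NP_{\leq N}$ is a Littlewood--Paley piece localized at $|\xi|\sim N$, and $\dot N/N=\gamma t^{-1}$. Hence this term is bounded by
\[
t^{-1}N^{\frac12-\beta}\||\nabla|^\beta g\|_{L^2}\lesssim\varepsilon_0t^{-1+\delta-\gamma(\beta-\frac12)},
\]
which is integrable by the same inequality as in the high-frequency step.

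\emph{Commutator term.} This is the main obstacle, because $V_g$ is only controlled through $\|g\|_{L^3}$. Apply Lemma \ref{lem:frac_projection_commutator} with $r=3$, $q=p_*$, $\tfrac1p=\tfrac{2\beta-1}{6}$ and $\kappa=\tfrac{2\beta-1}{2}\in(0,1)$. By Hardy--Littlewood--Sobolev, $\||\nabla|^\kappa(-\Delta)^{-1}|g|^2\|_{L^p}\lesssim\|g\|_{L^3}^2$; the exponents are chosen exactly so that $|g|^2\in L^{3/2}$ is the required input. By \eqref{eq:frac_Sobolev_HLS}, $\|g\|_{L^{p_*}}\lesssim\||\nabla|^\beta g\|_{L^2}$. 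Therefore
\[
t^{-1}\|[P_{\leq N},V_g]g\|_{L^3}\lesssim t^{-1}N^{-\kappa}\varepsilon^2\varepsilon_0t^{\delta}=\varepsilon^2\varepsilon_0\,t^{-1+\delta-\frac{2\beta-1}{4}},
\]
which is integrable since $\tfrac{2\beta-1}{4}>\delta$.

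Summing the three bounds gives $\|w(t)\|_{L^3}\lesssim\varepsilon_0(1+\varepsilon^2)$. Adding the high-frequency bound yields $\|g(t)\|_{L^3}\lesssim_{\beta,\delta,\lambda}\varepsilon_0$ for $1\leq t\leq T$, which is the claim.
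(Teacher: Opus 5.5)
Your proof is correct and follows essentially the same route as the paper: you split at $N=\sqrt t$, use Bernstein for the high-frequency piece and for the two linear forcing terms ($t^{-2}\Delta P_{\leq\sqrt t}g$ and $(\partial_tP_{\leq\sqrt t})g$), and run an $L^3$ estimate for $P_{\leq\sqrt t}g$, controlling the commutator error by Lemma \ref{lem:frac_projection_commutator}. The only difference is the exponent bookkeeping in the commutator. You take $\kappa=\beta-\frac12$ and feed $|g|^2\in L^{3/2}$ through the bootstrap $L^3$ bound, while the paper takes $\kappa=3\beta-\frac32$ and bounds all three factors by $\|g\|_{L^{p_*}}\lesssim\||\nabla|^\beta g\|_{L^2}$; the paper's choice is why it needs $\beta\leq\frac34$ to keep $\kappa<1$, whereas yours only needs $\beta<\frac32$.
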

\begin{proof}
Split $g=P_{\leq\sqrt t}g+P_{>\sqrt t}g$. Sobolev embedding and the support of the high-frequency multiplier give
\begin{equation}\label{eq:frac_high_frequency}
\|P_{>\sqrt t}g(t)\|_{L^3}\lesssim\||\nabla|^{\frac{1}{2}}P_{>\sqrt t}g(t)\|_{L^2}\lesssim t^{-{\frac{2\beta-1}{4}}}\||\nabla|^\beta g(t)\|_{L^2}\lesssim\varepsilon_0t^{-{\frac{2\beta-1}{4}}+\delta}.
\end{equation}
For $g_{\leq\sqrt t}:=P_{\leq\sqrt t}g$, equation \eqref{eq:frac_profile_equation} gives
\begin{equation}\label{eq:frac_low_equation}
i\partial_tg_{\leq\sqrt t}-t^{-2}\Delta g_{\leq\sqrt t}+\lambda t^{-1}V_g g_{\leq\sqrt t}=i(\partial_tP_{\leq\sqrt t})g-\lambda t^{-1}[P_{\leq\sqrt t},V_g]g.
\end{equation}
Since $V_g$ is real, differentiating the $L^3$ norm yields
\begin{align}\label{eq:frac_low_L3_ode}
\partial_t\|g_{\leq\sqrt t}(t)\|_{L^3}&\leq t^{-2}\|\Delta g_{\leq\sqrt t}\|_{L^3}+\|(\partial_tP_{\leq\sqrt t})g\|_{L^3}+|\lambda|t^{-1}\|[P_{\leq\sqrt t},V_g]g\|_{L^3}.
\end{align}
Bernstein's inequality gives
\begin{equation}\label{eq:frac_low_linear_errors}
t^{-2}\|\Delta g_{\leq\sqrt t}\|_{L^3}+\|(\partial_tP_{\leq\sqrt t})g\|_{L^3}\lesssim t^{-1-{\frac{2\beta-1}{4}}}\||\nabla|^\beta g(t)\|_{L^2}\lesssim\varepsilon_0t^{-1-{\frac{2\beta-1}{4}}+\delta}.
\end{equation}
For the second term, we used that $\partial_t\chi(\frac{\xi}{\sqrt t})$ is $t^{-1}$ times a smooth multiplier supported where $|\xi|\simeq\sqrt t$. For the commutator, choose $\kappa=3\beta-\frac{3}{2}={\frac{6(2\beta-1)}{4}}$ and $p=\frac{6}{2\beta-1}$. Our choice $\beta\leq\frac34$ ensures $0<\kappa<1$, and $\frac{1}{3}=\frac{1}{p}+\frac{1}{p_*}$. Hardy-Littlewood-Sobolev gives
\[
\||\nabla|^\kappa V_g\|_{L^p} \lesssim\||g|^2\|_{L^{\frac{p_*}{2}}}=\|g\|_{L^{p_*}}^2,
\]
since $\frac{1}{p}=\frac{2}{p_*}-\frac{2-\kappa}{3}$. Lemma \ref{lem:frac_projection_commutator} therefore yields
\begin{equation}\label{eq:frac_low_nonlinear_error}
    \begin{split}
        t^{-1}\|[P_{\leq\sqrt t},V_g]g\|_{L^3}&\lesssim t^{-1-\frac{\kappa}{2}}\||\nabla|^\kappa V_g\|_{L^p}\|g\|_{L^{p_*}}\\
        &\lesssim t^{-1-{\frac{3(2\beta-1)}{4}}}\||\nabla|^\beta g\|_{L^2}^3\lesssim\varepsilon_0^3t^{-1-{\frac{3(2\beta-1)}{4}+3\delta}}.
    \end{split}
\end{equation}
By \eqref{eq:frac_parameters}, $\delta<{\frac{2\beta-1}{4}}$, so both errors are integrable on $[1,\infty)$. Integrating \eqref{eq:frac_low_L3_ode}, and using the $L^3$ boundedness of $P_{\leq1}$ and \eqref{eq:frac_high_frequency}, gives
\[
\|g(t)\|_{L^3}\lesssim\|g(1)\|_{L^3}+C_{\beta,\delta,\lambda}(\varepsilon_0+\varepsilon_0^3)\lesssim\varepsilon_0,
\]
which proves the stated estimate. 
\end{proof}

Hence, with $A$ picked sufficiently large compared with the constants in the preceding estimates, Lemmas \ref{lem:frac_improved_derivative} and \ref{lem:frac_improved_L3} strictly improve \eqref{eq:frac_bootstrap}. The local weighted theory gives continuity of $g$ in $H^\beta$ on compact subintervals of $[1,\infty)$, and hence in $L^3$. The continuity argument therefore proves
\begin{equation}\label{eq:frac_global_bounds}
\sup_{t\geq1}\|g(t)\|_{L^3}\lesssim\varepsilon_0,\qquad \|g(t)\|_{H^\beta}\lesssim\varepsilon_0t^\delta,\qquad t\geq1.
\end{equation}

\subsection{Convergence of the renormalized profile} As in Section \ref{sec:asymptotics}, define
\begin{equation}\label{eq:frac_renormalization}
\theta(y,t):=\int_1^t\tau^{-1}V_g(y,\tau)\,d\tau,\qquad h(y,t):=g(y,t)e^{-i\lambda\theta(y,t)}.
\end{equation}
The potential is locally bounded in time with values in $L^\infty$: indeed, splitting the Coulomb kernel into $|x|<1$ and $|x|\geq1$ gives $\|V_g\|_{L^\infty}\lesssim\|g\|_{L^{p_*}}^2+\|g\|_{L^2}^2$, because $p_*>3$. Thus the phase is well-defined and real. We have
\begin{equation}\label{eq:frac_h_equation}
i\partial_t h=t^{-2}e^{-i\lambda\theta}\Delta g,\qquad \|h(t)\|_{L^2}=\|u_0\|_{L^2}.
\end{equation}

\begin{lemma}\label{lem:frac_phase_bounds}
For $t\geq1$,
\begin{equation}\label{eq:frac_phase_bounds}
\|\nabla\theta(t)\|_{L^3}+\|\Delta\theta(t)\|_{L^{\frac{3}{2}}}+\||\nabla|^\beta\theta(t)\|_{L^{{\frac{3}{\beta}}}}\lesssim\varepsilon_0^2\log t.
\end{equation}
\end{lemma}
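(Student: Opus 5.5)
The plan is to prove each of the three bounds pointwise in time for $V_g(\tau)$, and then integrate against $\tau^{-1}\,d\tau$ on $[1,t]$. By Minkowski's inequality,
\[
\|\nabla\theta(t)\|_{L^3}\le\int_1^t\tau^{-1}\|\nabla V_g(\tau)\|_{L^3}\,d\tau,
\]
and the same holds for the other two norms. It therefore suffices to show that each of
\[
\|\nabla V_g(\tau)\|_{L^3},\qquad \|\Delta V_g(\tau)\|_{L^{3/2}},\qquad \||\nabla|^\beta V_g(\tau)\|_{L^{3/\beta}}
\]
is bounded by $C\varepsilon_0^2$ uniformly in $\tau\ge1$. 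Integrating $\tau^{-1}$ then produces exactly the factor $\log t$.

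Each pointwise bound uses only the uniform $L^3$ control of the profile from \eqref{eq:frac_global_bounds}, namely $\sup_{\tau\ge1}\|g(\tau)\|_{L^3}\lesssim\varepsilon_0$.

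\textbf{Gradient.} We have $\nabla V_g=\nabla(-\Delta)^{-1}|g|^2$, which has order $-1$. Hardy--Littlewood--Sobolev from $L^{3/2}$ to $L^3$ gives
\[
\|\nabla V_g\|_{L^3}\lesssim\||g|^2\|_{L^{3/2}}=\|g\|_{L^3}^2\lesssim\varepsilon_0^2 .
\]

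\textbf{Laplacian.} Since $-\Delta V_g=|g|^2$ exactly,
\[
\|\Delta V_g\|_{L^{3/2}}=\|g\|_{L^3}^2\lesssim\varepsilon_0^2 .
\]

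\textbf{Fractional term.} This is the second inequality in \eqref{eq:frac_Sobolev_HLS}:
\[
\||\nabla|^\beta(-\Delta)^{-1}|g|^2\|_{L^{3/\beta}}\lesssim_\beta\|g\|_{L^3}^2 .
\]
It is HLS with order $2-\beta$ from $L^{3/2}$, and the exponents check out because $\frac{\beta}{3}=\frac23-\frac{2-\beta}{3}$.

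Everything here is a direct consequence of the bootstrap conclusion, so there is no real obstacle. The only points needing care are the following.
\begin{itemize}
\item Differentiating under the time integral in \eqref{eq:frac_renormalization} must be justified. This follows from the smooth-approximation argument of Lemma \ref{lem:frac_short_time}, or from continuity of $\tau\mapsto|g(\tau)|^2$ in $L^{3/2}$. The latter holds because $g\in C([1,T];H^\beta)\subset C([1,T];L^3)$, as noted after Lemma \ref{lem:frac_improved_L3}.
\item The uniform bound on $\|g\|_{L^3}$ must be used, rather than the slowly growing $H^\beta$ bound. Using the latter would give a polynomial loss $t^{2\delta}$ instead of $\log t$.
\end{itemize}
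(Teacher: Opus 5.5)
Your proposal is correct and follows the paper's proof: bound $\|\nabla V_g(\tau)\|_{L^3}$, $\|\Delta V_g(\tau)\|_{L^{3/2}}$ and $\||\nabla|^\beta V_g(\tau)\|_{L^{3/\beta}}$ by $\|g(\tau)\|_{L^3}^2\lesssim\varepsilon_0^2$, using Hardy--Littlewood--Sobolev and the identity $-\Delta V_g=|g|^2$. Then integrate against $d\tau/\tau$ to obtain the factor $\log t$. Your exponent checks and your remark that the uniform $L^3$ bound, not the growing $H^\beta$ bound, is what produces $\log t$ are both accurate.
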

\begin{proof}
The Hardy-Littlewood-Sobolev inequality and $-\Delta V_g=|g|^2$ give
\[
\|\nabla V_g(t)\|_{L^3}+\|\Delta V_g(t)\|_{L^{\frac{3}{2}}}+\||\nabla|^\beta V_g(t)\|_{L^{{\frac{3}{\beta}}}}\lesssim\|g(t)\|_{L^3}^2\lesssim\varepsilon_0^2.
\]
Integrating these bounds with measure $\frac{d\tau}{\tau}$ proves the result.
\end{proof}

We now proceed with a similar idea as in Section \ref{sec:asymptotics}, except that, here we start with the spatial asymptotics in $H^{-2}$ instead of $H^{-1}$. 
\begin{proposition}\label{prop:frac_Hminus2}
There exists $h_\infty\in H^{-2}$ such that
\begin{equation}\label{eq:frac_Hminus2}
\|h(t)-h_\infty\|_{H^{-2}}\lesssim\varepsilon_0t^{-1}(1+\log t)^2,\qquad t\geq1.
\end{equation}
\end{proposition}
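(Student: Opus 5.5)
We follow the $H^{-1}$ argument of Proposition \ref{prop:H_minus1_convergence}, but now put both derivatives on the test function. The aim is the bound
\[
\|\partial_t h(t)\|_{H^{-2}}\lesssim\varepsilon_0 t^{-2}(1+\log t)^2 ;
\]
since this is integrable on $[1,\infty)$, it gives $h_\infty$ with
\[
\|h(t)-h_\infty\|_{H^{-2}}\lesssim\varepsilon_0\int_t^\infty s^{-2}(1+\log s)^2\,ds\lesssim\varepsilon_0 t^{-1}(1+\log t)^2.
\]

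For $\varphi\in H^2(\R^3)$, equation \eqref{eq:frac_h_equation} and integrating by parts twice give
\[
i\partial_t\langle h(t),\varphi\rangle=t^{-2}\int g\,\overline{\Delta\big(e^{i\lambda\theta}\varphi\big)}\,dy .
\]
We expand
\[
\Delta(e^{i\lambda\theta}\varphi)=e^{i\lambda\theta}\big(\Delta\varphi+2i\lambda\nabla\theta\cdot\nabla\varphi+i\lambda\varphi\Delta\theta-\lambda^2|\nabla\theta|^2\varphi\big),
\]
and we use only the uniform bounds $\|g\|_{L^2}\lesssim\varepsilon_0$, $\|g\|_{L^3}\lesssim\varepsilon_0$ from \eqref{eq:frac_global_bounds}, together with Lemma \ref{lem:frac_phase_bounds}. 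The point is that no growing $H^\beta$ norm should enter. The four terms are handled as follows.
\begin{itemize}
\item The term $\Delta\varphi$ is bounded by $\|g\|_{L^2}\|\varphi\|_{H^2}$.
\item For $\nabla\theta\cdot\nabla\varphi$, Hölder with exponents $(3,3,3)$ and $H^2\hookrightarrow W^{1,3}$ give the bound $\|g\|_{L^3}\|\nabla\theta\|_{L^3}\|\nabla\varphi\|_{L^3}\lesssim\varepsilon_0^3\log t\,\|\varphi\|_{H^2}$.
\item For $\varphi\Delta\theta$, Hölder with exponents $(3,\frac32,\infty)$ and $H^2\hookrightarrow L^\infty$ give $\|g\|_{L^3}\|\Delta\theta\|_{L^{3/2}}\|\varphi\|_{L^\infty}\lesssim\varepsilon_0^3\log t\,\|\varphi\|_{H^2}$.
\item For $|\nabla\theta|^2\varphi$, Hölder with exponents $(3,\frac32,\infty)$ give $\|g\|_{L^3}\|\nabla\theta\|_{L^3}^2\|\varphi\|_{L^\infty}\lesssim\varepsilon_0^5(\log t)^2\|\varphi\|_{H^2}$.
\end{itemize}
Summing the four bounds gives the claimed estimate on $\|\partial_t h(t)\|_{H^{-2}}$, using $\varepsilon_0\le1$. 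This is also why the $(1+\log t)^2$ factor appears.

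The main obstacle is justification rather than estimation. One must check that $e^{i\lambda\theta}\varphi$ is admissible, i.e. that it lies in $H^2$ or at least in a space on which the integration by parts is valid. Since $\theta$ is only controlled through $\nabla\theta\in L^3$ and $\Delta\theta\in L^{3/2}$, this is not automatic. I would first take $\varphi\in C_c^\infty$ and interpret the pairing distributionally: every term above is absolutely integrable by the displayed Hölder bounds, so the identity holds in the sense of distributions. I would then extend to $H^2$ by density, using the uniform bound just proved. If needed, I would also regularize the solution, with $g(t)\in H^2$ via the approximation used in Lemma \ref{lem:frac_short_time}, and pass to the limit in the integrated-in-time identity.
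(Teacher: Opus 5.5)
Your proof is correct and follows the paper's argument: you pair with $\varphi\in H^2$, move $\Delta$ onto $e^{i\lambda\theta}\varphi$, expand by Leibniz, bound each term using only $\|g\|_{L^2}$, $\|g\|_{L^3}$ and Lemma \ref{lem:frac_phase_bounds}, and then integrate $\varepsilon_0 t^{-2}(1+\log t)^2$ from $t$ to $\infty$. The only differences are cosmetic. For the cross term $\nabla\theta\cdot\nabla\varphi$ the paper uses H\"older with exponents $(2,3,6)$, namely $\|g\|_{L^2}\|\nabla\theta\|_{L^3}\|\nabla\varphi\|_{L^6}$, where you use $(3,3,3)$; both work. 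Your justification remarks also go slightly beyond what the paper spells out.
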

\begin{proof}
Let $\varphi\in H^2(\R^3)$. We compute, using \eqref{eq:frac_h_equation}
\[
\partial_t\la h(t),\varphi\ra=-it^{-2}\la e^{-i\lambda\theta}\Delta g,\varphi\ra=-it^{-2}\la g,\Delta(e^{i\lambda\theta}\varphi)\ra.
\]
Leibniz's rule and Sobolev embedding give
\begin{align*}
\big|\partial_t\la h(t),\varphi\ra\big|&\lesssim_\lambda t^{-2}\Big[\|g\|_{L^2}\big(\|\Delta\varphi\|_{L^2}+\|\nabla\theta\|_{L^3}\|\nabla\varphi\|_{L^6}\big)+\|g\|_{L^3}\|\varphi\|_{L^\infty}\big(\|\Delta\theta\|_{L^{3/2}}+\|\nabla\theta\|_{L^3}^2\big)\Big]\\
&\lesssim\varepsilon_0t^{-2}(1+\log t)^2\|\varphi\|_{H^2}.
\end{align*}
Thus $\|\partial_t h(t)\|_{H^{-2}}\lesssim\varepsilon_0t^{-2}(1+\log t)^2$. Integrating this bound from $t$ to infinity proves the existence of the limit and \eqref{eq:frac_Hminus2}.
\end{proof}

\begin{lemma}\label{lem:frac_h_growth}
For $t\geq1$,
\begin{equation}\label{eq:frac_h_growth}
\|h(t)\|_{H^\beta}\lesssim\varepsilon_0t^\delta(1+\log t).
\end{equation}
\end{lemma}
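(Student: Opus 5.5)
We need $\|h\|_{H^\beta}$ where $h=ge^{-i\lambda\theta}$. Since $\|h\|_{L^2}=\|u_0\|_{L^2}\le\varepsilon_0$, it suffices to bound $\||\nabla|^\beta h\|_{L^2}$. The plan is to apply the fractional Leibniz rule to the product $g\cdot e^{-i\lambda\theta}$: the piece where $|\nabla|^\beta$ falls on $g$ is controlled by \eqref{eq:frac_global_bounds}, and the piece where it falls on the phase is controlled by Lemma \ref{lem:frac_phase_bounds}.

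Concretely, we write
\[
|\nabla|^\beta h=e^{-i\lambda\theta}|\nabla|^\beta g+[|\nabla|^\beta,e^{-i\lambda\theta}]g .
\]
The first term has $L^2$ norm $\||\nabla|^\beta g\|_{L^2}\lesssim\varepsilon_0t^\delta$ by \eqref{eq:frac_derivative_growth}. For the commutator we use \eqref{eq:frac_KP} with $f=e^{-i\lambda\theta}$ and $v=g$, which gives
\[
\|[|\nabla|^\beta,e^{-i\lambda\theta}]g\|_{L^2}\lesssim\||\nabla|^\beta e^{-i\lambda\theta}\|_{L^{3/\beta}}\|g\|_{L^{p_*}} .
\]
By \eqref{eq:frac_Sobolev_HLS}, $\|g\|_{L^{p_*}}\lesssim\||\nabla|^\beta g\|_{L^2}\lesssim\varepsilon_0t^\delta$. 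It then remains to show
\[
\||\nabla|^\beta e^{-i\lambda\theta}\|_{L^{3/\beta}}\lesssim\||\nabla|^\beta\theta\|_{L^{3/\beta}}\lesssim\varepsilon_0^2\log t .
\]

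This composition estimate is the main obstacle. Since $F(s)=e^{-i\lambda s}$ is Lipschitz with constant $|\lambda|$ and $0<\beta<1$, we would use the difference characterization of $\dot W^{\beta,p}$ via the Triebel--Lizorkin square function $\big(\int|f(x+y)-f(x)|^2|y|^{-3-2\beta}dy\big)^{1/2}$. This characterization is valid for $0<\beta<1$ and $1<p<\infty$, in particular for $p=3/\beta$. Because $|F(\theta(x+y))-F(\theta(x))|\le|\lambda||\theta(x+y)-\theta(x)|$ pointwise, the square function of $e^{-i\lambda\theta}$ is dominated by $|\lambda|$ times that of $\theta$, which gives the bound. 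A small technical point is that $\theta$ itself may not lie in $L^{3/\beta}$, only its derivatives do. The square-function characterization is homogeneous, though, so it applies modulo constants, and this is harmless.

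An alternative route for this step would use $\nabla\theta\in L^3$ and interpolate for $e^{-i\lambda\theta}$ between $L^\infty$ and $\dot W^{1,3}$ via Gagliardo--Nirenberg. However, that gives a bound in $\dot W^{\beta,3/\beta}$ of size $\|\nabla\theta\|_{L^3}^\beta\lesssim(\varepsilon_0^2\log t)^\beta$, which is even better. Either way, combining the pieces yields
\[
\||\nabla|^\beta h\|_{L^2}\lesssim\varepsilon_0t^\delta\bigl(1+\varepsilon_0^2\log t\bigr),
\]
and adding the $L^2$ norm proves \eqref{eq:frac_h_growth}.
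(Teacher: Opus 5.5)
Your proof is correct and follows the paper's argument. As in the paper, the fractional Leibniz/commutator bound \eqref{eq:frac_KP} reduces matters to $\||\nabla|^\beta g\|_{L^2}+\||\nabla|^\beta e^{-i\lambda\theta}\|_{L^{3/\beta}}\|g\|_{L^{p_*}}$, and the fractional chain rule for the Lipschitz map $s\mapsto e^{-i\lambda s}$ (which you justify via Stein's square-function characterization), together with Lemma \ref{lem:frac_phase_bounds} and \eqref{eq:frac_Sobolev_HLS}, closes the estimate. Two quibbles do not affect the conclusion: Stein's characterization holds for $\frac{6}{3+2\beta}<p<\infty$, not for all $1<p<\infty$, though this range covers $p=3/\beta$; and the Gagliardo--Nirenberg bound $(\varepsilon_0^2\log t)^\beta$ is worse, not better, than $\varepsilon_0^2\log t$ whenever the latter is below $1$, although both are $\lesssim 1+\log t$.
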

\begin{proof}
The fractional chain rule for $0<\beta<1$ and the reality of $\theta$ give
\[
\big\||\nabla|^\beta(e^{-i\lambda\theta})\big\|_{L^{{\frac{3}{\beta}}}}\lesssim_\beta |\lambda|\||\nabla|^\beta\theta\|_{L^{{\frac{3}{\beta}}}}\lesssim\varepsilon_0^2\log t.
\]
The derivative is homogeneous, so it is equivalently applied to $e^{-i\lambda\theta}-1$. The fractional Leibniz rule and \eqref{eq:frac_Sobolev_HLS} now imply
\begin{align*}
\||\nabla|^\beta h(t)\|_{L^2}&\lesssim\||\nabla|^\beta g(t)\|_{L^2}+\big\||\nabla|^\beta(e^{-i\lambda\theta(t)})\big\|_{L^{{\frac{3}{\beta}}}}\|g(t)\|_{L^{p_*}}\lesssim\varepsilon_0t^\delta(1+\log t).
\end{align*}
Together with \eqref{eq:frac_h_equation}, this proves the lemma.
\end{proof}

\begin{proposition}\label{prop:frac_L2_convergence}
We have $h_\infty$ belongs to $H^{\frac{1}{2}}$ and satisfies $\|h_\infty\|_{H^{\frac{1}{2}}}\lesssim\varepsilon_0$. Moreover, with $\rho=\frac{1-4\delta}{5}>0$,
\begin{equation}\label{eq:frac_L2_convergence}
\|h(t)-h_\infty\|_{L^2} \lesssim\varepsilon_0t^{-\rho}(1+\log t)^{\frac{6}{5}},\qquad t\geq1.
\end{equation}
In particular, $\|h_\infty\|_{L^2}=\|u_0\|_{L^2}$.
\end{proposition}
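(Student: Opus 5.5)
We will mimic Lemma \ref{lem:limit_regularity} and Proposition \ref{prop:L2_profile_convergence}, now interpolating between $H^{-2}$ and $H^\beta$ instead of $H^{-1}$ and $H^1$. The dyadic differences are
\[
d_n=h(2^{n+1})-h(2^n).
\]
Integrating $\|\partial_t h\|_{H^{-2}}\lesssim\varepsilon_0t^{-2}(1+\log t)^2$ from the proof of Proposition \ref{prop:frac_Hminus2} over $[2^n,2^{n+1}]$ gives
\[
\|d_n\|_{H^{-2}}\lesssim\varepsilon_0 2^{-n}(1+n)^2 .
\]
Lemma \ref{lem:frac_h_growth} gives
\[
\|d_n\|_{H^\beta}\lesssim\varepsilon_0 2^{n\delta}(1+n).
\]
On $\R^3$, complex interpolation (via Plancherel and H\"older in the Fourier variable) gives, for $-2\le s\le\beta$,
\[
\|f\|_{H^s}\lesssim\|f\|_{H^{-2}}^{\frac{\beta-s}{\beta+2}}\|f\|_{H^\beta}^{\frac{s+2}{\beta+2}} .
\]

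\emph{Regularity of $h_\infty$.} Take $s=\tfrac12$. The interpolation weights are $\frac{\beta-1/2}{\beta+2}$ on $H^{-2}$ and $\frac{5/2}{\beta+2}$ on $H^\beta$, so $\|d_n\|_{H^{1/2}}$ decays like $2^{-n\gamma}$ up to powers of $n$, with
\[
\gamma=\frac{(\beta-\tfrac12)-\tfrac52\delta}{\beta+2}.
\]
By \eqref{eq:frac_parameters}, $\delta<\frac{\beta-1/2}{5}$, so $\gamma>0$. Hence $h(1)+\sum_n d_n$ converges in $H^{1/2}$, with norm $\lesssim\varepsilon_0$ since $\|h(1)\|_{H^{\beta}}\lesssim\varepsilon_0$. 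Uniqueness of the $H^{-2}$ limit identifies this sum with $h_\infty$. This is the step where $\beta>\frac12$ is essential.

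\emph{Rate in $L^2$.} Take $s=0$ and use only the worst case $\beta>\frac12$. For the difference $h(t)-h(2^N)$ with $2^N\le t<2^{N+1}$, interpolation with the bounds above gives roughly $\varepsilon_0 2^{-N(2-\beta\cdot 0)\cdots}$. The cleaner route is to interpolate $L^2$ between $H^{-2}$ and $H^{1/2}$:
\[
\|f\|_{L^2}\lesssim\|f\|_{H^{-2}}^{1/5}\|f\|_{H^{1/2}}^{4/5}.
\]
This is presumably where the exponent $\frac{1-4\delta}{5}$ and the power $(\log)^{6/5}$ come from: $(\log^2)^{1/5}(\log)^{4/5}=\log^{6/5}$. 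For the $H^{1/2}$ factor I will use
\[
\|h(t)-h_\infty\|_{H^{1/2}}\le\|h(t)\|_{H^{1/2}}+\|h_\infty\|_{H^{1/2}}\lesssim\varepsilon_0 t^{\delta}(1+\log t),
\]
from Lemma \ref{lem:frac_h_growth} (since $\beta>\frac12$) and the first step. For the $H^{-2}$ factor I will use $\varepsilon_0t^{-1}(1+\log t)^2$ from Proposition \ref{prop:frac_Hminus2}. This yields
\[
\varepsilon_0\,t^{-\frac15+\frac45\delta}(1+\log t)^{\frac25+\frac45}=\varepsilon_0 t^{-\frac{1-4\delta}{5}}(1+\log t)^{6/5},
\]
which is exactly \eqref{eq:frac_L2_convergence} with $\rho=\frac{1-4\delta}{5}$. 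So no dyadic decomposition is needed for the rate once $h_\infty\in H^{1/2}$ is known.

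Finally, $\|h(t)\|_{L^2}=\|u_0\|_{L^2}$ by \eqref{eq:frac_h_equation}, and strong $L^2$ convergence gives $\|h_\infty\|_{L^2}=\|u_0\|_{L^2}$.

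The main obstacle is the first step. There the dyadic summation must beat the slow growth $t^\delta$ in $H^\beta$, which forces the constraint $\delta<(\beta-\frac12)/5$. One must also check that the logarithmic losses from Lemma \ref{lem:frac_phase_bounds} are harmless, which they are because they are polynomial in $n$ against the exponential decay $2^{-n\gamma}$.
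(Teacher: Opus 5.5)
Your argument is correct and is essentially the paper's proof. You use the same dyadic differences, interpolated between $H^{-2}$ and $H^\beta$ with decay exponent $\frac{\beta-\frac12-\frac52\delta}{\beta+2}$, to get $h_\infty\in H^{1/2}$, and then interpolate $h(t)-h_\infty$ directly between the $H^{-2}$ rate and the $H^{1/2}$ growth with weights $\frac15,\frac45$. The garbled aside about interpolating $h(t)-h(2^N)$ with $s=0$ plays no role in your final route and should simply be deleted.
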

\begin{proof}
Set $d_n=h(2^{n+1})-h(2^n)$. Proposition \ref{prop:frac_Hminus2} and Lemma \ref{lem:frac_h_growth} give
\[
\|d_n\|_{H^{-2}}\lesssim\varepsilon_0 2^{-n}(1+n)^2,\qquad \|d_n\|_{H^\beta}\lesssim\varepsilon_0 2^{n\delta}(1+n).
\]
Interpolating with $\vartheta=\frac{5}{2\beta+4}$, for which $\frac12=-2(1-\vartheta)+\beta\vartheta$, yields
\[
\|d_n\|_{H^{\frac{1}{2}}}\lesssim\|d_n\|_{H^{-2}}^{1-\vartheta}\|d_n\|_{H^\beta}^{\vartheta}\lesssim\varepsilon_0 2^{-n\frac{\beta-\frac12-\frac52\delta}{\beta+2}}(1+n)^{2-\vartheta}.
\]
The exponent of decay is positive by \eqref{eq:frac_parameters}. Therefore $h(1)+\sum_{n\geq0}d_n$ converges in $H^{\frac{1}{2}}$ to the same limit $h_\infty$, with norm bounded by $C\varepsilon_0$.

Since $\beta>\frac12$, Lemma~\ref{lem:frac_h_growth} also gives $\|h(t)-h_\infty\|_{H^{\frac{1}{2}}}\lesssim\varepsilon_0t^\delta(1+\log t)$. Interpolating this bound with \eqref{eq:frac_Hminus2}, we obtain
\begin{align*}
\|h(t)-h_\infty\|_{L^2}&\lesssim\|h(t)-h_\infty\|_{H^{-2}}^{\frac{1}{5}}\|h(t)-h_\infty\|_{H^{\frac{1}{2}}}^{\frac{4}{5}}\\
&\lesssim\varepsilon_0t^{-\frac{1-4\delta}{5}}(1+\log t)^{\frac{6}{5}}.
\end{align*}
This proves \eqref{eq:frac_L2_convergence}; the mass identity follows from strong $L^2$ convergence and \eqref{eq:frac_h_equation}.
\end{proof}

\medskip

Define $\phi_\infty=(-\Delta)^{-1}|h_\infty|^2$. The endpoint Coulomb estimate in \eqref{eq:endpoint-Coulomb-Lorentz} gives $\|\phi_\infty\|_{L^{3,\infty}}\lesssim\varepsilon_0^2$. Since $|g|=|h|$, Proposition \ref{prop:frac_L2_convergence} also gives
\begin{align}\label{eq:frac_potential_convergence}
\|V_g(t)-\phi_\infty\|_{L^{3,\infty}}&\lesssim\big\||h(t)|^2-|h_\infty|^2\big\|_{L^1}\lesssim\|h(t)-h_\infty\|_{L^2}\big(\|h(t)\|_{L^2}+\|h_\infty\|_{L^2}\big)\notag\\
&\lesssim\varepsilon_0^2t^{-\rho}(1+\log t)^{\frac{6}{5}}.
\end{align}
Consequently, the finite phase correction
\begin{equation}\label{eq:frac_finite_phase}
\ell(y):=\int_1^\infty\tau^{-1}\big(V_g(y,\tau)-\phi_\infty(y)\big)\,d\tau
\end{equation}
is well-defined almost everywhere and belongs to $L^{3,\infty}$. The corresponding tail estimate then follows:
\begin{equation}\label{eq:frac_phase_tail}
\|\theta(t)-\phi_\infty\log t-\ell\|_{L^{3,\infty}}\lesssim\varepsilon_0^2t^{-\rho}(1+\log t)^{\frac{6}{5}}.
\end{equation}

\begin{proof}[Proof of Theorem \ref{thm:full_fractional}]
Global well-posedness follows from the full-space local theory and conservation of mass. As in the proof of Theorem \ref{thm:dom}, the inequality $|e^{ia}-e^{ib}|\lesssim|a-b|^{\frac{1}{2}}$, Sobolev embedding and H\"older inequality give
\begin{align*}
\|g(t)-h_\infty e^{i\lambda(\phi_\infty\log t+\ell)}\|_{L^2}
&\lesssim_\lambda\|h(t)-h_\infty\|_{L^2}+\|h_\infty\|_{H^{\frac{1}{2}}}\|\theta(t)-\phi_\infty\log t-\ell\|_{L^{3,\infty}}^{\frac{1}{2}}\\
&\lesssim\varepsilon_0t^{-\eta},
\end{align*}
where we used \eqref{eq:frac_L2_convergence}, \eqref{eq:frac_phase_tail} and $\rho>2\eta$. Absorbing $e^{i\lambda\ell}$ into $h_\infty$ and returning to the original variables in \eqref{eq:frac_profile} proves \eqref{eq:fractional_asymptotics}.
\end{proof}

\begin{remark}
We expect that the weaker localization assumption considered here can also be treated for the Robin half-space problem. Such an extension would require additional estimates adapted to the boundary and is not pursued in this paper since the key contribution is to develop a physical-space argument for \eqref{Hartree}.
\end{remark}


\end{document}